\newtheorem{theorem}{Theorem}[section]
\newtheorem{lemma}[theorem]{Lemma}
\def\ifl{\iffalse }
\def\bc{\begin{center}}       \def\ec{\end{center}}
\def\ba{\begin{array}}        \def\ea{\end{array}}
\def\be{\begin{equation}}     \def\ee{\end{equation}}
\def\bea{\begin{eqnarray}}    \def\eea{\end{eqnarray}}
\def\beaa{\begin{eqnarray*}}  \def\eeaa{\end{eqnarray*}}
\numberwithin{equation}{section}
\newtheorem{definition}[theorem]{Definition}
\newtheorem{remark}[theorem]{Remark}
\numberwithin{equation}{section}
\begin{document}

\title[Two-species chemotaxis model with signal production]
{Global solvability in a two-species chemotaxis system with signal production}

\author{Guoqiang Ren}
\address{School of Mathematics and Statistics, Huazhong University of Science and Technology, Wuhan, 430074, Hubei, P. R. China;  Hubei Key Laboratory of Engineering Modeling and Scientific Computing,
       Huazhong University of Science and Technology,
       Wuhan, 430074, Hubei, P. R. China}
\email{597746385@qq.com.},

\author{Tian Xiang$^*$}
\address{Institute for Mathematical Sciences, Renmin University of China, Bejing, 100872, China}
\email{txiang@ruc.edu.cn}
\thanks{$^*$ Corresponding author.}

\subjclass[2000]{Primary:  35A01, 35K57; Secondary: 35Q92, 92C17.}


\keywords{Chemotaxis, Lotka-Volterra competition, global existence, generalized solution.}

\begin{abstract}

In this paper, we study  the following prototypical two-species chemotaxis system with Lotka-Volterra competition and signal production:
$$\left\{ \begin{array}{lll}
u_t=\Delta u-\nabla\cdot(u\nabla w)+u(1-u^{\theta-1}-v),  &x\in \Omega, t>0, \\[0.2cm]
v_t=\Delta v-\nabla\cdot(v\nabla w)+v(1-v-u), & x\in \Omega, t>0, \\[0.2cm]
w_t=\Delta w-w+ u+ v,  & x\in \Omega, t>0.
  \end{array}\right.(\ast)
$$
We show that if $\theta>1+\frac{N-2}{N}$, the associated Neumann initial-boundary value problem for   ($\ast$) admits a global generalized solutions in a bounded and smooth domain $\Omega\subset\mathbb{R}^N$ $(N\geq2)$ under merely integrable initial data.
 \end{abstract}

\maketitle

\section{Introduction and statement of main result}

In this work, we consider a homogenous  Neumman initial-boundary value problem (IBVP) for the following  two-species chemotaxis system with Lotka-Volterra type competitive kinetics and signal production:
\be \label{1.1}\begin{cases}
u_t=\Delta u-\chi_1\nabla\cdot(u\nabla w)+\mu_1u(1-u^{\theta-1}-a_1v),  &x\in \Omega, t>0, \\[0.2cm]
v_t=\Delta v-\chi_2\nabla\cdot(v\nabla w)+\mu_2v(1-v-a_2u), & x\in \Omega, t>0, \\[0.2cm]
w_t=\Delta w-w+\alpha u+\beta v,  & x\in \Omega, t>0, \\[0.2cm]
\frac{\partial u}{\partial \nu}=\frac{\partial v}{\partial \nu}=\frac{\partial w}{\partial \nu}=0, & x\in \partial \Omega, t>0,\\[0.2cm]
u(x,0)=u_0(x), \  v(x,0)=v_0(x), \   w(x,0)=w_0(x), \  & x\in \Omega,  \end{cases}  \ee
where $\Omega\subset\mathbb{R}^N$ $(N\geq2)$ is a bounded domain with smooth boundary $\partial\Omega$ and $\frac{\partial}{\partial\nu}$ denotes the outward normal derivative on  $\partial\Omega$, $u$ and $v$ represent the population densities of two species, $w$ denotes the concentration of the chemical, their given initial data $(u_0, v_0, w_0)$ are nonnegative typically with strong regularity. The model parameters $\chi_1,\chi_2, \mu_1,\mu_2,\alpha,\beta$ are positive constants, $a_1,a_2$ are nonnegative constants, and  the exponent $\theta>1$.

For  extensive progress on the single species (either $u\equiv 0$ or $v\equiv 0$) counterpart of \eqref{1.1}, we refer the interested reader to  \cite{La15-JDE, VG-17-NA, 42, Win21-ANS, Xiang18-SIAM} for  an incomplete selection of studies concerned with dynamical properties of classical and generalized solutions.

When $\theta=2$, Bai and Winkler \cite{16} proved that the IBVP  \eqref{1.1} admits a unique global bounded solution for all regular nonnegative initial data in the two-dimensional case. Moreover, when $a_1,a_2\in(0,1)$ and $\mu_1,\mu_2$ are sufficiently large, any global bounded solution is shown to  converges exponentially to the positive equilibrium  $(\frac{1-a_1}{1-a_1a_2},\frac{1- a_2}{1-a_1a_2},\frac{\alpha (1-a_1)+\beta (1-a_2)}{1-a_1a_2})$ as $t\rightarrow\infty$; if $a_1>1>a_2>0$ and $\mu_2$ is sufficiently large, then any global bounded solution converges exponentially  to $(0,1,\beta)$ as $t\rightarrow\infty$; if $a_1=1>a_2>0$ and $\mu_2$ is sufficiently large, then any global bounded solution  converges algebraically to $(0,1,\beta)$ as $t\rightarrow\infty$. Mizukami \cite{17} extended this result and showed that the system \eqref{1.1} admits a unique global uniformly bounded classical solution if $-w+\alpha u+\beta v$ is replaced by $h(u,v,w)$ and $\chi_1,\chi_2$ are replaced by $\chi_i(w)$, $i=1,2$, respectively, and $h, \chi_i$ $(i=1,2)$ fulfill appropriate conditions. In the three-dimensional case, global boundedness of classical solutions to  the IBVP \eqref{1.1} with $\theta=2$ was shown by  Lin and Mu \cite{18}  provided
 $$
\mu_1>27\chi_1+23\chi_2, \ \ \  \mu_2>27\chi_2+23\chi_1.
 $$
 The same boundedness was also derived by Li and Wang \cite{19} under
  $$
 \frac{\mu_1}{\alpha}>\frac{(9\chi^2_1+3\chi^2_2)(\sqrt{5}+\sqrt{2})}{2\sqrt{\chi^2_1+\chi^2_2}}, \ \
 \frac{\mu_2} {\beta}>\frac{(9\chi^2_2+3\chi^2_1)(\sqrt{5}+\sqrt{2})}{2\sqrt{\chi^2_1+\chi^2_2}}.
 $$
In higher dimensional cases, when  $\theta=2$, $\chi_1,\chi_2$ are respectively replaced by $\chi_i(w)$ $(i=1,2)$  and $\chi_i(w)\leq\frac{K_i}{(1+ \alpha_iw)^{k_i}}$ with some positive constants $K_i,\alpha_i$ and $k_i>1$ $(i=1,2)$, global boundedness of classical solutions was studied by Zhang and Li  \cite{20, 21} for $\mu_1$ and $\mu_2$ are suitably  large, and, global existence of weak solution was also shown for  $\mu_1>0$ and $\mu_2>0$ in convex domains. Very recently, when $\theta=2$, Jin et al. \cite{22} first extended the  boundedness  for  model \eqref{1.1} to the one with so-called  density dependent motility and nonlinear chemosensitivity; under further restrictions, convergence of bounded solutions to constant steady states was also shown like  \cite{16}.  When $\theta=2$ and the third equation simplifies to an elliptic equation, Lin et al. \cite{23} showed global boundedness and convergence of bounded solutions to  \eqref{1.1} under some largeness of $\mu_1$ and $\mu_2$ in weak competition cases. This boundedness result was further improved by  Wang \cite{24}. For other types of two-species chemotaxis systems, such as two-species chemotaxis-competition system with two signals, two-species chemotaxis-competition system with/without  loop and two-species chemotaxis-Navier-Stokes system, we  refer  to  \cite{25, 26, 27, 28, TMQY-20, 12.1, 29, 14.1, 30} and the references therein.

Before we proceed to our main results, let us also mention a close cousin chemotaxis system with  Lotka-Volterra competition  and signal-consumption instead  of signal-production (with the same boundary and initial conditions  suspended):
\be \label{1.3}\begin{cases}
u_t=\Delta u-\chi_1\nabla\cdot(u \nabla w)+\mu_1u(1-u^{\theta-1}-a_1v),  &x\in \Omega, t>0, \\[0.2cm]
v_t=\Delta v-\chi_2 \nabla\cdot(v\nabla w)+\mu_2v(1-v-a_2u), & x\in \Omega, t>0, \\[0.2cm]
w_t=\Delta w-(\alpha u+\beta v)w,  & x\in \Omega, t>0.   \end{cases}  \ee
Being the same research line as that of \eqref{1.1}, this system and its special  cases have been studied widely in respective of global solvability  of classical and generalized solutions, boundedness and convergence to constant equilibrium. First, without any damping source, global existence and boundedness of classical solutions to the IBVP \eqref{1.3} in $n$D  were showed by Zhang and Tao \cite{3}  under
\be\label{ZTcond}
\max\left\{\chi_1, \  \chi_2\right\}\|w_0\|_{L^\infty(\Omega)}<\pi\sqrt{\frac{2}{N}}.
\ee
Under this smallness condition, long time dynamics  are  also naturally obtained:
\be\label{uvw-cov-zt}
 \lim_{t\rightarrow \infty} \left(\left\|u(\cdot, t)-\bar{u}_0\right\|_{L^\infty(\Omega)}+\left\|v(\cdot, t)-\bar{v}_0\right\|_{L^\infty(\Omega)}+\left\|w(\cdot, t)\right\|_{L^\infty(\Omega)}\right)=0.
 \ee
Recently, we remove any smallness condition like \eqref{ZTcond} to show   global boundedness of classical solution in 2D and global existence,  eventual smoothness and convergence of weak solutions like \eqref{uvw-cov-zt} in 3,4,5D \cite{RX21-pre}.

In the presence of standard Lotka-Volterra competition, i.e., $\theta=2$, global  boundedness of classical solutions to \eqref{1.3} is ensured (with possibly nonlinear chemosensitivity) under
 $$
 \text{either} \  \ \   N\leq 2 \ \ \ \text{or} \  \  \   \max\left\{\chi_1, \  \chi_2\right\}\|w_0\|_{L^\infty(\Omega)}<\frac{\pi}{\sqrt{N+1}}.
$$
Furthermore, such bounded classical solutions are known (\cite{8, JX19,  5, 28, 3}) to convergence  for large $\mu_1$ and $\mu_2$ according to
\be\label{conv-log}
\left(u(\cdot, t), v(\cdot, t), w(\cdot, t)\right)\overset{\text{ in } L^\infty(\Omega)}\longrightarrow \begin{cases}  \left(\frac{1-a_1}{1-a_1a_2},  \frac{1-a_2}{1-a_1a_2}, 0\right),  &\text{if } a_1, a_2\in(0,1), \\[0.2cm]
\left(0,1, 0\right), &\text{if } a_1\geq 1> a_2>0, \\[0.2cm]
\left(1, 0,  0\right), &\text{if } 0<a_1< 1\leq a_2.
\end{cases}
\ee
By relaxing parameter restriction as in \cite{2,3}, recently,  global existence of generalized weak solutions and their long time  behaviors (similar to \eqref{conv-log}) to \eqref{1.3} in nD are shown by Ren and Liu \cite{4} under
$$
   \max\left\{\chi_1, \  \chi_2\right\}\|w_0\|_{L^\infty(\Omega)}<\frac{1}{2}.
$$
With generalized Lotka-Volterra competition, i.e., $\theta>1$ but not necessarily $\theta=2$, global existence of generalized solution to \eqref{1.3} is shown in  \cite{6} for  $\theta>1$. Furthermore, eventual smoothness and convergence of weak solutions after  some (perhaps long)  waiting time  was recently shown by Ren and Liu \cite{15} in the three-dimensional case.
When species diffusions depend on signal, global boundedness and convergence were shown by  Qiu et al. \cite{7}  in 2, 3D  for $\mu_1,\mu_2$  sufficiently large.  For related models, the interested reader can find a large body of literature \cite{10, 8, 8.1, 11, 9, 12, 29.1, 36, 13, 14}.

From the review above about signal production case, we find that global existence and convergence are examined only under the logistic damping, i.e.,   $\theta=2$. A natural question is  whether or not the IBVP  \eqref{1.1} allows  a global solution for $\theta<2$? In this project,  inspired by the recent  work about signal-absorption case \cite{6}, and also \cite{35, 34, 33}, we shall examine such a question in a general  framework  of global generalized solutions to the IBVP \eqref{1.1} in domains of $N\geq2$.  Even through only the third equation in \eqref{1.1} and \eqref{1.3} are different, we would add that the respective prototypical models of \eqref{1.1} (the KS chemotaxis-production model or the minimal chemotaxis model)  and \eqref{1.3} (the KS chemotaxis-consumption model) have totally different phenomena: the former is well-known to have blow-ups for $N=2$ and large mass or $N\geq 3$ \cite{42, Xiang18-SIAM}, while, the latter is globally and classically solvable for $N\leq2$ and  is weakly solvable for $N=3,4,5$ \cite{RX21-pre}.  On the other hand, we wish to use this opportunity to make previous related arguments more transparent, more smooth and more understandable. Hence, we initiate to solve  \eqref{1.3} globally in a generalized sense in this  project. To state our result more precisely, let us specify our working weak regularity of integrable initial data as follows:
\be \label{1.2}
\begin{cases}
u_0\geq 0, \ v_0\geq 0,\  w_0\geq 0, \  \ r=\max\left\{2,  \  \frac{N(2-\theta)}{2(\theta-1)}\right\}, \\[0.25cm]
(u_0, \ v_0, \ w_0)\in L^1(\Omega)\times L^1(\Omega)\times L^r(\Omega).
\end{cases}  \ee
Now, we state our main result about global existence of generalized solution \eqref{1.1}.
\begin{theorem}\label{Thm1.1}
Let $\Omega\subset\mathbb{R}^N$ $(N\geq2)$ be a bounded domain with smooth boundary, the initial data $(u_0,v_0,w_0)$ fulfill  \eqref{1.2}, the model parameters  $\chi_1, \chi_2, \mu_1, \mu_2, \alpha, \beta>0$, $a_1,a_2\geq0$ and
\be\label{theta-large}
\theta>1+\frac{N-2}{N}=\frac{2N-2}{N}.
\ee
Then there exists at least one triple $(u,v,w)$ of nonnegative functions satisfying
$$
  \begin{cases}
u\in L^\infty((0,\infty);L^1(\Omega)), \\[0.25cm]
v\in L^\infty((0,\infty);L^1(\Omega))\ \ \ \and \\[0.25cm]
w\in \bigcap_{p\in[1,\frac{N+2}{N+1})}L^p_{loc}([0,\infty);W^{1,p}(\Omega)),
\end{cases}
$$
which is a global generalized solution of \eqref{1.1} in the sense of Definition \ref{def2.1} below.
\end{theorem}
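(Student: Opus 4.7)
The plan is to prove Theorem \ref{Thm1.1} by the familiar three-stage strategy of regularization, uniform a priori bounds, and limit passage, following the general pattern of \cite{6, 15} but adapted to the signal-production setting of \eqref{1.1}. First I would introduce a family of regularized problems indexed by $\varepsilon\in(0,1)$, obtained by damping the chemotactic flux (for instance by replacing $u\nabla w$ with $\frac{u}{1+\varepsilon u}\nabla w$ and similarly for $v$) and by smoothly approximating the initial data $(u_0,v_0,w_0)$ with nonnegative classical data $(u_{0\varepsilon},v_{0\varepsilon},w_{0\varepsilon})$ that converge to $(u_0,v_0,w_0)$ in $L^1(\Omega)\times L^1(\Omega)\times L^r(\Omega)$. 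Standard parabolic theory then yields global classical solutions $(u_\varepsilon,v_\varepsilon,w_\varepsilon)$ to each regularized problem, and the task reduces to extracting a suitable subsequential limit.

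The next block of estimates is the quantitative heart of the argument. Integrating the first equation gives
$$
\frac{d}{dt}\int_\Omega u_\varepsilon+\mu_1\int_\Omega u_\varepsilon^\theta \le \mu_1\int_\Omega u_\varepsilon,
$$
and together with Young's inequality (using $\theta>1$) this produces a uniform $L^\infty((0,\infty);L^1(\Omega))$ bound on $u_\varepsilon$ together with the space-time control $\int_0^T\int_\Omega u_\varepsilon^\theta\le C(T)$; the analogous computation with $\theta$ replaced by $2$ yields uniform $L^1$-in-$x$ and $L^2$-in-$(x,t)$ bounds for $v_\varepsilon$. Feeding these into the $w$-equation $w_{\varepsilon,t}-\Delta w_\varepsilon+w_\varepsilon=\alpha u_\varepsilon+\beta v_\varepsilon$ and invoking standard smoothing estimates for the Neumann heat semigroup with $L^1$ forcing, I would conclude that $w_\varepsilon$ is bounded in $L^p_{\rm loc}([0,\infty);W^{1,p}(\Omega))$ for every $p\in[1,\frac{N+2}{N+1})$, which is exactly the regularity announced in the theorem. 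The role of the initial condition $w_0\in L^r(\Omega)$ with $r=\max\{2,\frac{N(2-\theta)}{2(\theta-1)}\}$ is to ensure that these Duhamel-based bounds can be initialized in a way compatible with the uniform-in-$\varepsilon$ estimates above.

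The main obstacle lies in extracting a strong-compactness estimate for $u_\varepsilon$ (and $v_\varepsilon$) compatible with the sub-logistic threshold $\theta>\frac{2N-2}{N}$. Here I would test the first equation against $(u_\varepsilon+1)^{-\gamma}$ for a suitable $\gamma>0$, which produces a positive dissipation term of the type $\int|\nabla u_\varepsilon|^2(u_\varepsilon+1)^{-\gamma-2}$ that, after a Young-type absorption, is sufficient to control the chemotactic cross-term against the $L^p W^{1,p}$ bound on $\nabla w_\varepsilon$ precisely when $\theta>1+\frac{N-2}{N}$; the exponent threshold is exactly what is needed to balance the Sobolev exponents on the two sides. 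Combined with an estimate for $\partial_t u_\varepsilon$ in a negative Sobolev space coming from the $u$-equation tested against smooth functions, an Aubin--Lions argument gives compactness of $\ln(u_\varepsilon+1)$ in $L^1_{\rm loc}$, which via the uniform $L^\theta_{\rm loc}$ bound and Vitali upgrades to strong $L^1_{\rm loc}$ convergence of $u_\varepsilon$; the identical reasoning, with $\theta$ replaced by $2$, applies to $v_\varepsilon$.

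Finally I would pass to the limit $\varepsilon\downarrow 0$. Strong $L^1_{\rm loc}$ convergence of $u_\varepsilon,v_\varepsilon$, together with weak $L^p_{\rm loc}$ convergence of $\nabla w_\varepsilon$, is enough to identify the triple $(u,v,w)$ as a generalized solution in the sense of Definition \ref{def2.1}: the $w$-equation passes in the usual weak sense using weak convergence of $u_\varepsilon,v_\varepsilon$ in $L^1$ and the heat-equation form, while the $u$- and $v$-equations are verified in the generalized (super-solution-plus-mass-identity) formulation. The nonlinear logistic terms $u_\varepsilon^\theta$ and $v_\varepsilon^2$ converge by combining the uniform space-time $L^\theta$ and $L^2$ bounds with strong pointwise a.e.\ convergence via Vitali, and the nonlinear fluxes $\frac{u_\varepsilon}{1+\varepsilon u_\varepsilon}\nabla w_\varepsilon$ converge (when integrated against the relevant nonlinear test functions $(u+1)^{-\gamma}\varphi$ appearing in the generalized formulation) by the same strong-times-weak pairing, completing the proof.
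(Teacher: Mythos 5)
Your overall three-stage strategy (regularize, obtain uniform bounds, pass to the limit) matches the paper in spirit, but several key steps are misidentified or would not close as stated. First, you locate the exponent restriction $\theta>\frac{2N-2}{N}$ in the $u$-equation, claiming that testing against $(u_\varepsilon+1)^{-\gamma}$ produces a chemotactic cross-term controllable by the $L^p_{loc}W^{1,p}$ bound on $\nabla w_\varepsilon$; but that bound only gives $p<\frac{N+2}{N+1}<2$, which by no Young-type splitting can absorb a cross-term of the form $\int u_\varepsilon^2\phi''(u_\varepsilon)|\nabla w_\varepsilon|^2$ against the dissipation $\int\phi''(u_\varepsilon)|\nabla u_\varepsilon|^2$. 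In the paper the condition $\theta>\frac{2N-2}{N}$ is used somewhere else entirely: it guarantees an $\varepsilon$-uniform $L^p$ bound on $w_\varepsilon$ (Lemma \ref{lem2.1}), which via Gagliardo--Nirenberg and an energy estimate for the third equation yields $\nabla w_\varepsilon\in L^2_{loc}$ (Lemma \ref{lem3.4}); that $L^2$ gradient bound is then what controls the $v$-equation (through the $\ln(v_\varepsilon+1)$ test and the term $\frac{v_\varepsilon^2}{(1+v_\varepsilon)^2}|\nabla w_\varepsilon|^2$), not the $u$-equation. Indeed, the paper's Remark explicitly observes that if $v\equiv0$ one only needs $\theta>1$, whereas in your account the restriction is attributed to a $u$-estimate that in fact holds for all $\theta>1$.

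Second, your choice of test function $(u_\varepsilon+1)^{-\gamma}$ without the exponential weight $e^{-kw_\varepsilon}$ misses the essential algebraic structure of the argument. The paper works with $z_\varepsilon=(u_\varepsilon+1)^{-p}e^{-kw_\varepsilon}$, and the constraint $k>\frac{\sqrt{p}(p+1)}{2}$ is precisely what makes the quadratic form in the evolution identity \eqref{2.16} sign-definite, so that both $\int|\nabla z_\varepsilon^{1/2}|^2$ and $\int z_\varepsilon|\nabla w_\varepsilon|^2$ are controlled simultaneously (Lemma \ref{lem3.7}); without $e^{-kw_\varepsilon}$ the cross-term does not fully complete to a square and the residual $\int|\nabla w_\varepsilon|^2$ re-enters. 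This exponential weight is also what the entire generalized solution concept (Definition \ref{def2.1}, with the choices $\phi,\xi,\Phi$) is built around. Related to this, your assertion that the $(u_\varepsilon+1)^{-\gamma}$ test gives Aubin--Lions compactness of $\ln(u_\varepsilon+1)$ is incorrect -- it yields compactness for quantities of the type $(u_\varepsilon+1)^{-\gamma/2}$ (the paper's $z_\varepsilon^{1/2}$), while the $\ln(\cdot+1)$ compactness appears only for $v_\varepsilon$ via testing the second equation against $\frac{1}{v_\varepsilon+1}$. Finally, your claim that $u_\varepsilon^\theta\to u^\theta$ in $L^1_{loc}$ by Vitali from a uniform $L^\theta$ bound plus a.e.\ convergence does not hold (one needs equi-integrability of $u_\varepsilon^\theta$, i.e., an $L^{\theta+\delta}$ bound that is not available); the paper sidesteps this by working with inequalities and weak lower semi-continuity (Fatou / weak-$L^2$ lower semi-continuity) rather than a strong limit of the nonlinear source. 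Your regularization of the flux rather than the production term in the $w$-equation is a different but plausible choice; the more serious gaps are the ones above.
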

\begin{remark}[Notes on global existence of generalized solutions of \eqref{1.1}]
\
\
\
\begin{itemize}
\item[(i)] We require the space dimension $N\geq 2$ simply because the corresponding IVBP can be easily  globally and classically solvable for $N=1$ even without any damping sources. In the weak solution framework, we drop the parameter restrictions required in   \cite{19, 18, 20, 21}.
\item[(ii)] In the signal-consumption case, global existence of  generalized solution to \eqref{1.3} was ensured in \cite{6} for $\theta>1$. In the  signal-production case, we are only able to show such  result for $N=2$. The condition \eqref{theta-large} is mainly used in Lemma \ref{lem3.4},  and,  when $v\equiv0$, Lemma \ref{lem3.4} is automatically  true;  then, under  $\theta>1$ and $r=1$ in \eqref{1.2},  a small modification of our proof  yields the existence of global generalized solution, coinciding  with \cite{33}.
\end{itemize}
\end{remark}

In the sequel, the symbols $C_i$ and $c_i$ $(i=1,2,\cdots)$ will denote  generic positive constants which may vary line-by-line. For simplicity, $u(x,t)$ is written as $u$, the integral $\int_{\Omega}u(x)dx$ is written as $\int_{\Omega}u(x)$ and $\int^t_0\int_{\Omega}u(x)dxdt$ is written as $\int^t_0\int_{\Omega}u(x)$.

The contents of the remaining paper are outlined as follows. In Section 2, we first normalize the system \eqref{1.1} and then formulate a convenient approximating system \eqref{2.10}, which is globally and classically solvable (cf. Lemma \ref{lem2.1}), thereafter, we derive an important evolution  identity \eqref{2.16} associated with the approximating system, which facilitates the introduction of generalized solution (which motives beyond from integration by parts). In Section 3, we first provide  some fundamental $\varepsilon$-independent a-priori estimates for  the approximating system \eqref{2.10}, which allows us to employ compactness arguments to carry out the limiting procedure so as to derive a global generalized solution to \eqref{1.1} under the main condition \eqref{theta-large}, thus finishing the proof of Theorem \ref{Thm1.1}.

\section{Preliminaries and the concept of generalized solution}
In order to make our notion of generalized solution more intuitive and understandable (which,   motivated mainly from in \cite{33}, is based beyond integration by parts),  we  first introduce an approximating  system   for \eqref{1.3}, which is globally and classically solvable. To this end, for $\varepsilon\in(0,1)$ and $r$ given by \eqref{1.2}, we fix function families $(u_{0\varepsilon})_{\varepsilon\in(0,1)}\subset C^0(\overline{\Omega})$, $(v_{0\varepsilon})_{\varepsilon\in(0,1)}\subset C^0(\overline{\Omega})$ and $(w_{0\varepsilon})_{\varepsilon\in(0,1)}\subset W^{1,\infty}(\Omega)$ such that $u_{0\varepsilon},v_{0\varepsilon}$ and $w_{0\varepsilon}$ are nonnegative and
\be\label{l1-bdd-epsi}
\|u_{0\varepsilon}\|_{L^1}\leq 1+\|u_0\|_{L^1}, \ \  \|v_{0\varepsilon}\|_{L^1}\leq 1+\|v_0\|_{L^1}, \ \  \|w_{0\varepsilon}\|_{L^r}\leq 1+\|w_0\|_{L^r},
\ee
and, as $\varepsilon\searrow0$,  that
\be \label{2.11}\begin{cases}
u_{0\varepsilon}\rightarrow u_0\ \ \textmd{in}~L^1(\Omega)\ \ \textmd{and}\ \textmd{a.e.}\ \textmd{in}\ \Omega,\cr
v_{0\varepsilon}\rightarrow v_0\ \ \textmd{in}~L^1(\Omega)\ \ \textmd{and}\ \textmd{a.e.}\ \textmd{in}\ \Omega,\cr
w_{0\varepsilon}\rightarrow w_0\ \ \textmd{in}~L^r(\Omega)\ \ \textmd{and}\ \textmd{a.e.}\ \textmd{in}\ \Omega.
\end{cases}  \ee
In our forthcoming analysis, the involving model parameters only bring more inconvenient computations, play inessential roles and will not change our conclusion. Hence, except the key parameter $\theta$, we assume all other parameters to be unit:
\be\label{unit}
\chi_1=\chi_2=\mu_1=\mu_2=a_1=a_2=1.
\ee
Then, we are ready to  formulate a convenient approximating system of \eqref{1.1}:
\be \label{2.10}\begin{cases}
u_{\varepsilon t}=\Delta u_{\varepsilon}- \nabla\cdot(u_{\varepsilon}\nabla w_{\varepsilon})+ u_{\varepsilon}(1-u_{\varepsilon}^{\theta-1}- v_{\varepsilon}),  &x\in \Omega, t>0, \\[0.2cm]
v_{\varepsilon t}=\Delta v_{\varepsilon}- \nabla\cdot(v_{\varepsilon}\nabla w_{\varepsilon})+ v_{\varepsilon}(1-v_{\varepsilon}- u_{\varepsilon}), & x\in \Omega, t>0, \\[0.2cm]
w_{\varepsilon t}=\Delta w_{\varepsilon}-w_{\varepsilon}+\frac{u_{\varepsilon}+ v_{\varepsilon}}{1+\varepsilon( u_{\varepsilon}+v_{\varepsilon})},  & x\in \Omega, t>0, \\[0.2cm]
\frac{\partial u_{\varepsilon}}{\partial \nu}=\frac{\partial v_{\varepsilon}}{\partial \nu}=\frac{\partial w_{\varepsilon}}{\partial \nu}=0, & x\in \partial \Omega, t>0,\\[0.2cm]
u_{\varepsilon}(x,0)=u_{0\varepsilon}(x), \  v_{\varepsilon}(x,0)=v_{0\varepsilon}(x), \   w_{\varepsilon}(x,0)=w_{0\varepsilon}(x), \  & x\in \Omega.   \end{cases}  \ee
 By well-known local well-posed in chemotaxis related systems, the IBVP \eqref{2.10} admits a  unique  nonnegative and classical solution $(u_\varepsilon,v_\varepsilon,w_\varepsilon)$ defined on $\overline{\Omega}\times (0, T_{\varepsilon,max})$.  Now, applying  the widely known smoothing $L^p$-$L^q$-estimates of the
Neumann heat semigroup $(e^{t\Delta})_{t\geq 0}$ in $\Omega$ (cf.  \cite{42,Xiang18-SIAM})  to the third equation in \eqref{2.10}, one can easily see that $\|\nabla w_\varepsilon(\cdot,t)\|_{L^\infty}$ is uniformly bounded on $(0, T_{\varepsilon,max})$.  With this key estimate, one can easily use semigroup or energy estimate to show that both $\| u_\varepsilon(\cdot,t)\|_{L^\infty}$ and $\| v_\varepsilon(\cdot,t)\|_{L^\infty}$ are  uniformly bounded on $(0, T_{\varepsilon,max})$. These enable one to conclude first global existence ($T_{\varepsilon,max}=\infty$) and then boundedness, as stated in the following lemma.
\begin{lemma}\label{lem2.1}
Let  $\theta>1$ and $\Omega\subset\mathbb{R}^N$ $(N\geq1)$ be a bounded domain with smooth boundary and let the initial data $(u_{0\varepsilon}, v_{0\varepsilon}, w_{0\varepsilon})$ be as described above.  Then for each $\varepsilon\in (0,1)$, there exist a triple of nonnegative functions
$$
\left\{ \begin{array}{lll}
u_{\varepsilon}\in C^0(\overline{\Omega}\times(0,\infty))\cap C^{2,1}(\overline{\Omega}\times(0,\infty)),\cr
v_{\varepsilon}\in C^0(\overline{\Omega}\times(0,\infty))\cap C^{2,1}(\overline{\Omega}\times(0,\infty)),\cr
w_{\varepsilon}\in\bigcap\limits_{q>N}C^0([0,\infty);W^{1,q}(\Omega))\cap C^{2,1}(\overline{\Omega}\times(0,\infty)),
\end{array}\right.
$$
and that $(u_{\varepsilon},v_{\varepsilon},w_{\varepsilon})$ solves \eqref{2.10} classically. Furthermore, for $t>0$,
\be\label{2.12}
\int_{\Omega}u_{\varepsilon}(\cdot,t)\leq m_1:=\max\left\{1+\| u_0\|_{L^1}, \ \  (\theta-1)\left(\frac{2}{\theta}\right)^\frac{\theta}{\theta-1}|\Omega|\right\},
\ee
\be\label{2.13}
\int_{\Omega}v_{\varepsilon}(\cdot,t)\leq m_2:=\max\left\{1+\| v_0\|_{L^1}, \ \   |\Omega|\right\}
\ee
and, for all $T>0$,
\be\label{2.14}
\int^T_0\int_{\Omega}u^\theta_{\varepsilon}\leq m_1T+1+\|u_0\|_{L^1},
\ee
\be\label{2.15}
\int^T_0\int_{\Omega}v^2_{\varepsilon}\leq m_2T+1+\|v_0\|_{L^1}.
\ee
Finally, if $\theta>\frac{2N-2}{N}$, then, for
\be\label{w-epsi-bdd}
\begin{cases}
1\leq p\leq \max\left\{2,  \   \frac{N(2-\theta)}{2(\theta-1)}\right\},  & \text{ if  } N=2,3, \\[0.25cm]
1\leq p\leq \max\left\{1,  \  \frac{N(2-\theta)}{2(\theta-1)}\right\},  &\text{ if  } N\geq 4,
\end{cases}  \ee
there exists $C_2=C_2(p)>0$ such that
\be\label{w-bdd}
\|w_\varepsilon(\cdot, t)\|_{L^p}\leq C_2, \ \  \ \forall t>0.
\ee
\end{lemma}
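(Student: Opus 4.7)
The plan is to dispose of the essentially routine parts first---local and global existence, then the mass and spacetime bounds---and to focus on the one piece that genuinely uses the hypothesis $\theta>\frac{2N-2}{N}$, namely the uniform $L^p$-estimate on $w_\varepsilon$.

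Local existence via a standard chemotaxis fixed-point argument produces a unique classical solution on a maximal interval $[0,T_{\varepsilon,\max})$ with the customary blow-up alternative $\|u_\varepsilon(\cdot,t)\|_{L^\infty}+\|v_\varepsilon(\cdot,t)\|_{L^\infty}\to\infty$ as $t\nearrow T_{\varepsilon,\max}$. Global existence follows because the regularized source $\tfrac{u_\varepsilon+v_\varepsilon}{1+\varepsilon(u_\varepsilon+v_\varepsilon)}\leq 1/\varepsilon$ is pointwise bounded; Neumann heat semigroup smoothing applied to the third equation of \eqref{2.10} therefore yields $\nabla w_\varepsilon\in L^\infty(\Omega\times(0,T_{\varepsilon,\max}))$, after which a Moser iteration on the first two equations with this bounded drift rules out blow-up. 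The mass bounds \eqref{2.12}--\eqref{2.13} come from integrating the $u$- and $v$-equations over $\Omega$, where the Neumann divergence terms vanish: discarding the nonpositive coupling $-\int_\Omega u_\varepsilon v_\varepsilon$ leaves $\frac{d}{dt}\int_\Omega u_\varepsilon+\int_\Omega u_\varepsilon\leq\int_\Omega(2u_\varepsilon-u_\varepsilon^\theta)$; H\"older reduces the right-hand side to $2y-y^\theta/|\Omega|^{\theta-1}$ with $y:=\int_\Omega u_\varepsilon$, and the elementary optimization of this one-variable expression produces exactly the constant $(\theta-1)(2/\theta)^{\theta/(\theta-1)}|\Omega|$ in \eqref{2.12}; Gr\"onwall then closes the argument, and \eqref{2.13} is the analogous calculation with exponent $\theta=2$. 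The spacetime bounds \eqref{2.14}--\eqref{2.15} follow by integrating the full identity $\frac{d}{dt}\int_\Omega u_\varepsilon=\int_\Omega u_\varepsilon-\int_\Omega u_\varepsilon^\theta-\int_\Omega u_\varepsilon v_\varepsilon$ (and its $v$-analogue) in time from $0$ to $T$ and combining with the mass bound and \eqref{l1-bdd-epsi}.

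The substantive step is \eqref{w-bdd}. Apply Duhamel to the third equation and take $L^p$-norms:
$$\|w_\varepsilon(\cdot,t)\|_{L^p}\leq \|e^{t(\Delta-1)}w_{0\varepsilon}\|_{L^p}+\int_0^t\Bigl\|e^{(t-s)(\Delta-1)}\tfrac{u_\varepsilon+v_\varepsilon}{1+\varepsilon(u_\varepsilon+v_\varepsilon)}\Bigr\|_{L^p}\,ds.$$
Every admissible $p$ in \eqref{w-epsi-bdd} satisfies $p\leq r$, so $L^p$-contractivity of the semigroup handles the initial term via $\|w_{0\varepsilon}\|_{L^p}\leq |\Omega|^{1/p-1/r}\|w_{0\varepsilon}\|_{L^r}$. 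For the Duhamel integral, the standard $L^1$-$L^p$ smoothing estimate $\|e^{\tau(\Delta-1)}\phi\|_{L^p}\leq C(1+\tau^{-\frac{N}{2}(1-1/p)})e^{-\tau}\|\phi\|_{L^1}$, combined with the uniform $L^1$-bound on $u_\varepsilon+v_\varepsilon$ from \eqref{2.12}--\eqref{2.13}, reduces the problem to integrability of $\tau\mapsto\tau^{-\frac{N}{2}(1-1/p)}e^{-\tau}$ on $(0,\infty)$, which holds iff $p<\frac{N}{N-2}$ (read as $p<\infty$ when $N=2$). The key equivalence is that $\theta>\frac{2N-2}{N}$ is precisely the same as $\frac{N(2-\theta)}{2(\theta-1)}<\frac{N}{N-2}$, so the entire range in \eqref{w-epsi-bdd} sits strictly inside $[1,\frac{N}{N-2})$, and \eqref{w-bdd} follows with a $p$-dependent constant blowing up as $p\nearrow\frac{N}{N-2}$. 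The main conceptual point---and the only genuine obstacle---is recognizing that no use of the spacetime bounds \eqref{2.14}--\eqref{2.15} is required at this stage: the bare $L^\infty(L^1)$ information on the source is enough, and the hypothesis $\theta>\frac{2N-2}{N}$ is precisely what keeps us on the safe side of the parabolic Sobolev exponent.
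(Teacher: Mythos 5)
Your proof is correct and follows essentially the same route as the paper: integrate the $u$- and $v$-equations, absorb the linear term to get a dissipative ODI, optimize to obtain the explicit constant, integrate in time for \eqref{2.14}--\eqref{2.15}, and then run Duhamel with the $L^1$-$L^p$ Neumann heat-semigroup estimate on the third equation using only the $L^\infty_t(L^1_x)$ bound on $u_\varepsilon+v_\varepsilon$, with $\theta>\frac{2N-2}{N}$ entering exactly as the condition guaranteeing $p<\frac{N}{(N-2)^+}$. The one cosmetic difference is in deriving the mass-bound constant: the paper optimizes $s\mapsto 2s-s^\theta$ pointwise and then multiplies by $|\Omega|$, whereas you first apply Jensen to pass to the scalar $y=\int_\Omega u_\varepsilon$ and then optimize $y\mapsto 2y-y^\theta/|\Omega|^{\theta-1}$; both routes produce the identical constant $(\theta-1)(2/\theta)^{\theta/(\theta-1)}|\Omega|$, so the arguments are interchangeable.
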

\begin{proof} As outlined above, the  global existence follows, see details in similar systems \cite{4, 39}. Now, integrating the first equation in \eqref{2.10} over $\Omega$ by parts and using the nonnegativity of $u_\varepsilon$ and  $v_\varepsilon$, we obtain
\be\label{b-uvw4*}
    \frac{d}{dt}\int_\Omega u_\varepsilon\leq\int_\Omega \left( u_\varepsilon-   u_\varepsilon^\theta\right)\leq -\int_\Omega   u_\varepsilon+ (\theta-1)\left(\frac{2}{\theta}\right)^\frac{\theta}{\theta-1}|\Omega|.
\ee
Solving this simple ODI (ordinary differential inequality) directly, we get
$$
\int_\Omega u_\varepsilon\leq \| u_{0\varepsilon}\|_{L^1}e^{- t}+ (\theta-1)\left(\frac{2}{\theta}\right)^\frac{\theta}{\theta-1}|\Omega|
\left(1-e^{- t}\right),
$$
which along with \eqref{l1-bdd-epsi} entails \eqref{2.12}. Now, an integration of \eqref{b-uvw4*} from $0$ to $T$ and a use of \eqref{2.12} imply \eqref{2.14}. The estimates for $v_\varepsilon$ in \eqref{2.13} and \eqref{2.15} can be easily obtained in the same way via setting $\theta=2$.

To derive an $\varepsilon$-independent bound for $w_\varepsilon$, we first use variation-of-constants formula to write  the $w_\varepsilon$-equation in  \eqref{2.10} as
$$
w_\varepsilon(\cdot, t)=e^{t(\Delta -1)}w_{0\varepsilon}+\int_0^te^{(t-s)(\Delta -1)}\frac{(u_{\varepsilon}+ v_{\varepsilon})(\cdot,s)}{1+\varepsilon( u_{\varepsilon}+ v_{\varepsilon})(\cdot,s)}ds.
$$
For $p$ satisfying \eqref{w-epsi-bdd}, we first clearly have $p\leq r$ by \eqref{1.2}, and then,  since $\theta>\frac{2N-2}{N}$ we see that $ p<\frac{N}{(N-2)^+}$ or equivalently $N-(N-2)p>0$. Then we apply  the well-known smoothing $L^p$-$L^q$ type estimates  of the
Neumann heat semigroup $(e^{t\Delta})_{t\geq 0}$  along with  \eqref{l1-bdd-epsi}, \eqref{2.12} and \eqref{2.13}  to deduce
\begin{equation*}
\begin{split}
\|w_\varepsilon(\cdot,t)\|_{L^p}&\leq e^{-t}\| e^{t\Delta}w_{0\varepsilon}\|_{L^p}+\int^t_0  \left\|e^{-(t-s)}e^{(t-s)\Delta}\frac{(u_{\varepsilon}+ v_{\varepsilon})(\cdot,s)}{1+\varepsilon( u_{\varepsilon}+ v_{\varepsilon})(\cdot,s)} \right\|_{L^p}ds\\
&\leq c_1\|w_{0\varepsilon}\|_{L^p}+c_1\int^t_0\left[1+(t-s)^{-\frac{N}{2}(1-\frac{1}{p})}\right]
e^{-(t-s)}\left\| (u_{\varepsilon}+ v_{\varepsilon})(\cdot,s)\right\|_{L^1}ds\\
& \leq c_2\|w_{0\varepsilon}\|_{L^r}+c_2\int^t_0\left[1+\tau^{-\frac{N}{2}(1-\frac{1}{p})}\right]
e^{-\tau}d\tau\\
&\leq c_2\left(1+\|w_0\|_{L^r}\right)+c_2\left(2+\frac{2p}{N-(N-2)p}\right), \  \  \ \forall t>0,
\end{split}
\end{equation*}
yielding immediately the desired   $\varepsilon$-independent bound \eqref{w-bdd}.
\end{proof}
Evidently, a formal limit of \eqref{2.10} as $\varepsilon\rightarrow 0+$ yields \eqref{1.1} with \eqref{unit}. In the sequel, we shall carry out rigourous limiting procedure to construct  a generalized solution to \eqref{1.1} out of \eqref{2.10}. To this purpose, we provide the following evolution  identity, which is an important  preparation for our concept of generalized solution.
\begin{lemma}
For $\varepsilon\in[0,1)$ and $T\in(0,\infty]$, let $(u,  v, w)$ from  $(C^{2,1}(\overline{\Omega}\times(0,T)))^3$  solve  \eqref{2.10}  on $\Omega\times(0,T)$. Then for any $C^2([0,\infty))$ functions $\phi,\xi$ and $\Phi$ with  $\phi\geq0$, $\xi>0$, $\phi''>0$ and $\Phi'=\sqrt{\phi''}$ on $[0,\infty)$ and for $\psi\in C^\infty(\overline{\Omega}\times(0,T))$, it holds that
\be\label{2.16}
\begin{split}
&\int_{\Omega}\partial_t\{\phi(u)\xi(w)\}\cdot\psi\cr
&=-\int_{\Omega}\Bigl|\nabla(\Phi(u)\sqrt{\xi(w)})+\Bigl\{\frac{\phi'(u)}{\sqrt{\phi''(u)}}\cdot
\frac{\xi'(w)}{\sqrt{\xi(w)}}-\frac{1}{2}\Phi(u)\frac{\xi'(w)}{\sqrt{\xi(w)}}\cr
&\ \ \ \ -\frac{1}{2}u\sqrt{\phi''(u)}\cdot\sqrt{\xi(w)}\Bigl\}\nabla w\Bigl|^2\psi\cr
&\ \ \ \ -\int_{\Omega}\Bigl\{\phi(u)\xi''(w)-\frac{\phi'^2(u)}{\phi''(u)}\cdot\frac{\xi'^2
(w)}{\xi(w)}-\frac{1}{4}u^2\phi''(u)\xi(w)\Bigl\}\cdot|\nabla w|^2\psi\cr
&\ \ \ \ -\int_{\Omega}\frac{\phi'(u)}{\sqrt{\phi''(u)}}\sqrt{\xi(w)}\nabla(\Phi(u)
\sqrt{\xi(w)})\cdot\nabla\psi\cr
&\ \ \ \ +\int_{\Omega}\Bigl\{ u\phi'(u)\xi(w) -\phi(u)\xi'(w)+ \frac{1}{2}\frac{\Phi(u)\phi'(u)}{\sqrt{\phi''(u)}}\xi'(w)\Bigl\}\nabla w\cdot\nabla\psi\cr
&\ \ \ \ +\int_{\Omega}\Bigl\{u(1-u^{\theta-1}-v)\phi'(u)\xi(w)
 +\left(\frac{u+ v}{1+\varepsilon ( u+ v)}-w\right)\phi(u)\xi'(w)\Bigl\}\cdot\psi.
\end{split} \ee
\end{lemma}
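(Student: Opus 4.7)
The plan is to verify \eqref{2.16} by a direct bookkeeping calculation: differentiate $\phi(u)\xi(w)$ in time via the chain rule, substitute the $u$- and $w$-equations from \eqref{2.10}, test against $\psi$, and integrate by parts on every diffusive and taxis term. The homogeneous Neumann conditions on $u,v,w$ kill all boundary contributions, so nothing is lost; the only real work is the algebraic recognition of the resulting quadratic form in $(\nabla u,\nabla w)$ as the negative of a perfect square plus a corrective $|\nabla w|^2$-term.

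Carrying this out on the three diffusion/taxis contributions $\int_\Omega\phi'(u)\xi(w)\Delta u\cdot\psi$, $-\int_\Omega\phi'(u)\xi(w)\nabla\cdot(u\nabla w)\cdot\psi$ and $\int_\Omega\phi(u)\xi'(w)\Delta w\cdot\psi$ produces three families of terms after integration by parts: (a) a quadratic form in $(\nabla u,\nabla w)$ multiplied by $\psi$, with coefficients $-\phi''(u)\xi(w)$ on $|\nabla u|^2$, $u\phi''(u)\xi(w)-2\phi'(u)\xi'(w)$ on $\nabla u\cdot\nabla w$ and $u\phi'(u)\xi'(w)-\phi(u)\xi''(w)$ on $|\nabla w|^2$; (b) the $\nabla\psi$-contributions $-\phi'(u)\xi(w)\nabla u\cdot\nabla\psi+[u\phi'(u)\xi(w)-\phi(u)\xi'(w)]\nabla w\cdot\nabla\psi$; (c) the pure reaction terms $\{u(1-u^{\theta-1}-v)\phi'(u)\xi(w)+(\frac{u+v}{1+\varepsilon(u+v)}-w)\phi(u)\xi'(w)\}\psi$, which coincide verbatim with the last line of \eqref{2.16}.

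The central step is to rewrite (a) in the required form. Using $\Phi'=\sqrt{\phi''}$, I would first compute $\nabla(\Phi(u)\sqrt{\xi(w)})=\sqrt{\phi''(u)\xi(w)}\,\nabla u+\frac{\Phi(u)\xi'(w)}{2\sqrt{\xi(w)}}\,\nabla w$. Denoting by $B(u,w)$ the bracketed coefficient of $\nabla w$ inside the square in \eqref{2.16}, expansion of $|\nabla(\Phi(u)\sqrt{\xi(w)})+B\nabla w|^2$ and negation matches the $|\nabla u|^2$ and $\nabla u\cdot\nabla w$ coefficients in (a) directly. For the $|\nabla w|^2$ coefficient, setting $a=\phi'(u)\xi'(w)/\sqrt{\phi''(u)\xi(w)}$ and $c=\tfrac12 u\sqrt{\phi''(u)\xi(w)}$, that coefficient collapses to $(a-c)^2$; its negative, together with the separate $|\nabla w|^2$-residual $\{\phi(u)\xi''(w)-\phi'^2(u)\xi'^2(w)/[\phi''(u)\xi(w)]-\tfrac14 u^2\phi''(u)\xi(w)\}$ in \eqref{2.16}, leaves exactly $u\phi'(u)\xi'(w)-\phi(u)\xi''(w)$ after cancellation of the $a^2$ and $c^2$ contributions, noting $2ac=u\phi'(u)\xi'(w)$.

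Finally, the two $\nabla\psi$-integrals in \eqref{2.16} combine into (b): expanding $\nabla(\Phi(u)\sqrt{\xi(w)})$ inside the third integral, the auxiliary piece $\tfrac12\Phi(u)\phi'(u)\xi'(w)/\sqrt{\phi''(u)}$ cancels between the third and fourth integrals, leaving exactly the $\nabla\psi$-terms identified in (b). The only genuine obstacle is the completing-the-square algebra in (a); once it is verified, assembling (a)--(c) yields \eqref{2.16}.
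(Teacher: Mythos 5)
Your proposal is correct and amounts to carrying out in full the ``honest computation'' that the paper delegates to \cite[Lemma 3.1]{33} and \cite[Lemma 2.2]{6}: differentiate $\phi(u)\xi(w)$ by the chain rule, substitute the $u$- and $w$-equations, integrate by parts using the Neumann data, and complete the square in $(\nabla u,\nabla w)$ via $\nabla(\Phi(u)\sqrt{\xi(w)})=\sqrt{\phi''(u)\xi(w)}\,\nabla u+\tfrac{\Phi(u)\xi'(w)}{2\sqrt{\xi(w)}}\nabla w$. Your coefficients in step (a), the identification $2ac=u\phi'(u)\xi'(w)$, and the cancellation of $\tfrac12\Phi(u)\phi'(u)\xi'(w)/\sqrt{\phi''(u)}$ between the two $\nabla\psi$-integrals all check out, so this matches the paper's intended route.
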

\begin{proof}
Honest computations via \eqref{2.10} and using the fact that
$$
\nabla u=\frac{1}{\sqrt{\phi''(u)\xi(w)}}\nabla
\left(\Phi(u)\sqrt{\xi(w)}\right)
-\frac{1}{2}\frac{\Phi(u)}{\sqrt{\phi''(u)}}\cdot
\frac{\xi'(w)}{\xi(w)}\nabla w,
$$
one can readily adapt  the computations in  \cite[Lemma 3.1]{33} or  \cite[Lemma 2.2]{6} to derive the evolution identity \eqref{2.16}.
\end{proof}

 We are now in the position to formulate  the notion of generalized solution to \eqref{1.1},  which largely motivates from the identity  \eqref{2.16} in the limiting case $\varepsilon =0$ and can  be found in \cite{33}, and also in \cite{6, 37}.
\begin{definition}\label{def2.1}
Let $(u,v,w)$ be a triple of nonnegative functions satisfying
\be \label{2.1}
\begin{cases}
\left(u, v, w\right)\in \left(L^1_{loc}(\overline{\Omega}\times[0,\infty)),  L^1_{loc}(\overline{\Omega}\times[0,\infty)),   L^1_{loc}([0,\infty);W^{1,1}(\Omega))\right), \\[0.25cm]
 u(1-u^{\theta-1}-v), \ \ \  v(1-v-u) \in L^1_{loc}(\overline{\Omega}\times[0,\infty)), \\[0.25cm]
 \nabla \ln(v+1), \ \ \ \frac{v}{v+1}\nabla w  \in L^2_{loc}(\overline{\Omega}\times[0,\infty); \mathbb{R}^N).
 \end{cases}
\ee
Then $(u,v,w)$ will be called a global generalized solution of \eqref{1.1} if
\be\label{2.2}
\begin{split}
&-\int^\infty_0\int_{\Omega}\ln(v+1)\psi_t-\int_{\Omega}\ln(v_0+1)\psi(\cdot,0)\cr
&\geq \int^\infty_0\int_{\Omega}|\nabla\ln(v+1)|^2\psi
-\int^\infty_0\int_{\Omega}\nabla\ln(v+1)\cdot\nabla\psi\\
&\ \ +\int^\infty_0\int_{\Omega}\frac{v}{v+1}\nabla w\cdot\nabla\psi-\int^\infty_0\int_{\Omega}\frac{v\psi}{v+1}\nabla w\cdot\nabla\ln(v+1)\\
&\ \ \ \ +\int^\infty_0\int_{\Omega}\frac{v\psi}{v+1}(1-v-u)
\end{split} \ee
and
 \be\label{2.3}
 \begin{split}
&-\int^\infty_0\int_{\Omega}w\psi_t-\int_{\Omega}w_0\psi(\cdot,0)\\
&=-\int^\infty_0\int_{\Omega}\nabla w\cdot\nabla\psi -\int^\infty_0\int_{\Omega}w\psi+\int^\infty_0\int_{\Omega}( u+ v)\psi
\end{split}
\ee
for all   nonnegative  $\psi\in C^\infty_0(\Omega\times[0,\infty))$, if
\be\label{2.4}
\int_{\Omega}u(\cdot,t)\leq\int_{\Omega}u_0+ \int^t_0\int_{\Omega}u(1-u^{\theta-1}- v)\ \ \textmd{for}\ \textmd{a.e.}\ t>0,
\ee
and if there exist functions $\left(\phi, \xi, \Phi\right)\in \left(C^2([0,\infty))\right)^3$ such that
\be\label{2.5}
\phi'<0,\ \ \xi>0\ \ \textmd{and}\ \ \phi''>0\ \ \textmd{on}\ [0,\infty),\ \  \Phi'=\sqrt{\phi''}\ \ \textmd{on}\ [0,\infty),
\ee
that
$$
\Bigl\{\phi(u)\xi''(w)-\frac{\phi'^2(u)}{\phi''(u)}\cdot\frac{\xi'^2(w)}{\xi(w)}-\frac{1}{4}u^2\phi''(u)\xi(w)\Bigl\}|\nabla w|^2,\ \ u\phi'(u)\xi(w)|\nabla w|,
$$
$$
\phi(u)\xi'(w)|\nabla w|, \ \ \frac{\Phi(u)\phi'(u)}{\sqrt{\phi''(u)}}\xi'(w)|\nabla w| \text{ and }  u(1-u^{\theta-1}-v)\phi'(u)\xi(w)\   \text{ as  well as}
$$
\be\label{2.7}
 u\phi(u)\xi'(w), v\phi(u)\xi'(w)   \text{ and }  w\phi(u)\xi'(w)\  \text{ belong to }\ L^1_{loc}(\overline{\Omega}\times[0,\infty))
\ee
that
\be\label{2.8}
\Phi(u)\sqrt{\xi(w)}\in L^2_{loc}([0,\infty);W^{1,2}(\Omega)),
\ee
and that for each nonnegative $\varphi\in C^\infty_0(\Omega\times[0,\infty))$, the following inequality holds:
\be\label{2.9}
\begin{split}
&-\int^\infty_0\int_{\Omega}\phi(u)\xi(w)\varphi_t-\int_{\Omega}\phi(u_0)\xi(w_0)\varphi(\cdot,0)\cr
&\leq-\int^\infty_0\int_{\Omega}\Bigl|\nabla(\Phi(u)\sqrt{\xi(w)})+\Bigl\{\frac{\phi'(u)}{\sqrt{\phi''(u)}}\cdot\frac{\xi'(w)}{
\sqrt{\xi(w)}}-\frac{1}{2}\Phi(u)\frac{\xi'(w)}{\sqrt{\xi(w)}}\cr
&\ \ \ \ -\frac{1}{2}u\sqrt{\phi''(u)}\cdot\sqrt{\xi(w)}\Bigl\}\nabla w\Bigl|^2\varphi\cr
&\ \ \ \ -\int^\infty_0\int_{\Omega}\Bigl\{\phi(u)\xi''(w)
-\frac{\phi'^2(u)}{\phi''(u)}\cdot\frac{\xi'^2(w)}{\xi(w)}-\frac{1}{4}u^2
\phi''(u)\xi(w)\Bigl\}\cdot|\nabla w|^2\varphi\cr
&\ \ \ \ -\int^\infty_0\int_{\Omega}\frac{\phi'(u)}{\sqrt{\phi''(u)}}\sqrt{\xi(w)}\nabla(\Phi(u)\sqrt{\xi(w)})\cdot\nabla\varphi\cr
&\ \ \ \ +\int^\infty_0\int_{\Omega}\Bigl\{ u\phi'(u)\xi(w)-\phi(u)\xi'(w)+\frac{1}{2}\frac{\Phi(u)\phi'(u)}{\sqrt{\phi''(u)}}\xi'(w)\Bigl\}\nabla w\cdot\nabla\varphi\cr
&\ \ +\int^\infty_0\int_{\Omega}\left\{u(1-u^{\theta-1}-v)\phi'(u)\xi(w)
 +\left( u+ v-w\right)\phi(u)\xi'(w)\right\}\cdot\varphi
\end{split} \ee
\end{definition}

\begin{remark}
Analogous to the reasoning of  \cite{35, 38}, multiplying the inequality $u_t\geq \Delta u- \nabla\cdot(u\nabla w)+ u(1-u^{\theta-1}-v)$ by $\frac{\psi}{w+1}$, the above definition is in accordance with classical solution if $(u,v,w)$ is a global generalized  solution in the above sense that additionally fulfills $(u,v,w)\in(C^0(\overline{\Omega}\times[0,\infty))\cap C^{2,1}(\overline{\Omega}\times(0,\infty)))^3$, then $(u,v,w)$ actually solves \eqref{1.1} classically on $\Omega\times(0,\infty)$.
\end{remark}

In the end of this section, for later purpose and   for convenience of reference,  we present the following well-known Gagliardo-Nirenberg interpolation inequality.
\begin{lemma} \label{lem2.5} (Gagliardo-Nirenberg interpolation inequality \cite{40, 41})
Let $0<q\leq p\leq\frac{2N}{N-2}$. There exists a positive constant $C_{GN}$ such that
$$
\|u\|_{L^p(\Omega)}\leq C_{GN}\left(\|\nabla u\|^a_{L^2(\Omega)}\|u\|^{1-a}_{L^q(\Omega)}+\|u\|_{L^q(\Omega)}\right),  \ \ \forall u\in W^{1,2}(\Omega)\cap L^q(\Omega),
$$
where  $a=\frac{\frac{1}{q}-\frac{1}{p}}{\frac{1}{q}-\frac{1}{2}+\frac{1}{N}}\in(0,1)$.
\end{lemma}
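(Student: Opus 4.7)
The plan is to reduce the bounded-domain statement to the classical Gagliardo--Nirenberg inequality on $\mathbb{R}^N$ via an extension operator, and then absorb the extra lower-order contribution that the extension produces. First, I would verify that the exponent $a$ is well-defined: the assumption $0<q\leq p\leq\frac{2N}{N-2}$ together with $p>q$ makes the numerator $\tfrac{1}{q}-\tfrac{1}{p}$ positive, and the denominator $\tfrac{1}{q}-\tfrac{1}{2}+\tfrac{1}{N}=\tfrac{1}{q}-\tfrac{1}{2^{*}}$ is positive because $q\leq p\leq 2^{*}$, and their ratio is $\leq 1$; scaling $u(x)\mapsto u(\lambda x)$ in the whole-space case forces precisely this value of $a$ if one wants both sides to share the same homogeneity in $\lambda$.

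Second, I would prove the whole-space version
\[
\|v\|_{L^{p}(\mathbb{R}^{N})}\leq C\,\|\nabla v\|_{L^{2}(\mathbb{R}^{N})}^{a}\|v\|_{L^{q}(\mathbb{R}^{N})}^{1-a},\qquad v\in C_{c}^{\infty}(\mathbb{R}^{N}),
\]
by combining the Sobolev embedding $\|v\|_{L^{2^{*}}(\mathbb{R}^{N})}\leq C_{S}\|\nabla v\|_{L^{2}(\mathbb{R}^{N})}$ (when $N\geq 3$; for $N=2$ the analogous $W^{1,2}\hookrightarrow L^{s}$ embedding for any finite $s$) with the logarithmic-convexity (Hölder) interpolation
\[
\|v\|_{L^{p}}\leq \|v\|_{L^{2^{*}}}^{a}\,\|v\|_{L^{q}}^{1-a},\qquad \tfrac{1}{p}=\tfrac{a}{2^{*}}+\tfrac{1-a}{q},
\]
which is exactly the algebraic identity defining $a$. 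Density extends the inequality to $W^{1,2}\cap L^{q}$.

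Third, for the bounded, smooth domain $\Omega$, I would invoke a standard extension operator $E\colon W^{1,2}(\Omega)\to W^{1,2}(\mathbb{R}^{N})$, chosen so that $Eu$ has compact support and the bounds
\[
\|Eu\|_{L^{r}(\mathbb{R}^{N})}\leq C\|u\|_{L^{r}(\Omega)},\qquad \|\nabla Eu\|_{L^{2}(\mathbb{R}^{N})}\leq C\bigl(\|\nabla u\|_{L^{2}(\Omega)}+\|u\|_{L^{2}(\Omega)}\bigr)
\]
both hold (the first for all $r\in[1,\infty]$, via reflection/cutoff; the second is the usual $W^{1,2}$-extension estimate). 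Applying the $\mathbb{R}^{N}$ inequality to $Eu$ and using $(x+y)^{a}\leq x^{a}+y^{a}$ for $a\in(0,1)$ yields
\[
\|u\|_{L^{p}(\Omega)}\leq C\,\|\nabla u\|_{L^{2}(\Omega)}^{a}\|u\|_{L^{q}(\Omega)}^{1-a}+C\,\|u\|_{L^{2}(\Omega)}^{a}\|u\|_{L^{q}(\Omega)}^{1-a}.
\]
If $q\geq 2$ the finite measure of $\Omega$ already gives $\|u\|_{L^{2}}\leq |\Omega|^{1/2-1/q}\|u\|_{L^{q}}$, so the extra term collapses to $C\|u\|_{L^{q}}$ and we are done.

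The main obstacle, and the step I would single out, is the case $0<q<2$, where $L^{2}$ does not embed into $L^{q}$ for free. There I would interpolate $\|u\|_{L^{2}}\leq \|u\|_{L^{p}}^{\lambda}\|u\|_{L^{q}}^{1-\lambda}$ with $\tfrac{1}{2}=\tfrac{\lambda}{p}+\tfrac{1-\lambda}{q}$, substitute into the residual term, and observe that the resulting exponent $a\lambda$ on $\|u\|_{L^{p}(\Omega)}$ is strictly less than $1$ (since $a<1$ and $\lambda<1$); Young's inequality then lets me absorb a small multiple of $\|u\|_{L^{p}(\Omega)}$ from the right into the left, picking up only an additional pure $\|u\|_{L^{q}(\Omega)}$ term, exactly matching the additive $+\|u\|_{L^{q}(\Omega)}$ in the statement. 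Bookkeeping of constants and verifying $a\lambda<1$ via the explicit formulas for $a$ and $\lambda$ is the only delicate computation.
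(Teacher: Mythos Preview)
The paper does not prove this lemma; it is merely stated with citations to \cite{40,41} as a well-known result, so there is no argument in the paper to compare against. Your outline follows the standard route---extend to $\mathbb{R}^N$, apply the whole-space Sobolev--H\"older interpolation, then absorb the lower-order $\|u\|_{L^2}$ contribution coming from the extension---and is essentially correct.

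There is, however, a genuine gap in the absorption step you single out. Your interpolation $\|u\|_{L^2}\le\|u\|_{L^p}^{\lambda}\|u\|_{L^q}^{1-\lambda}$ with $\tfrac12=\tfrac{\lambda}{p}+\tfrac{1-\lambda}{q}$ is valid only when $q\le 2\le p$; in the sub-range $q<p<2$, which the hypotheses $0<q\le p\le\tfrac{2N}{N-2}$ do not exclude, this forces $\lambda>1$ and the interpolation inequality fails, so the residual $\|u\|_{L^2}^{a}\|u\|_{L^q}^{1-a}$ cannot be absorbed as you describe. The fix is short: first run your argument with target exponent $p=2$, where the residual is $\|u\|_{L^2}^{a_2}\|u\|_{L^q}^{1-a_2}$ with $a_2<1$ and Young's inequality lets you absorb it directly into the left side $\|u\|_{L^2}$, yielding $\|u\|_{L^2}\le C\|\nabla u\|_{L^2}^{a_2}\|u\|_{L^q}^{1-a_2}+C\|u\|_{L^q}$; then substitute this bound into the residual for general $p<2$ and apply Young once more to recover the correct exponent $a$. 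A smaller point: you claim $\|Eu\|_{L^r}\le C\|u\|_{L^r}$ only for $r\in[1,\infty]$, while the lemma permits $q\in(0,1)$; reflection-type extensions remain bounded in the $L^q$ quasi-norm for $q<1$, but you should say so explicitly.
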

\section{Global existence of generalized solution: Proof of Theorem \ref{Thm1.1}}
In this section, we first derive $\varepsilon$-independent estimates for solutions to the approximate system \eqref{2.10}, and then we  carry out the limit procedure to obtain the global existence of generalized solution, finishing the proof  Theorem \ref{Thm1.1}.
\begin{lemma}\label{lem3.1}
For  $\theta>1$,  there exists $C_1>0$ such that, for all $T>0$,
\be\label{3.1}
 \int^T_0\int_{\Omega}|u_{\varepsilon}(1-u_{\varepsilon}^{\theta-1}- v_{\varepsilon})|+
\int^T_0\int_{\Omega}|v_{\varepsilon}(1-v_{\varepsilon}-u_{\varepsilon})|\leq C_1(1+T), \ \ \forall \varepsilon\in(0,1).
\ee
Moreover, we have
\be\label{3.3}
\sup_{\varepsilon\in(0,1)}\sup_{t>0}\int_{\Omega}u_\varepsilon(\cdot,t)+
\sup_{\varepsilon\in(0,1)}\sup_{t>0}\int_{\Omega}v_\varepsilon(\cdot,t)<+\infty.
\ee
\end{lemma}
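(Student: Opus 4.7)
The second bound \eqref{3.3} is essentially already in hand: it follows immediately from the uniform-in-time $L^1$ estimates \eqref{2.12} and \eqref{2.13} established in Lemma \ref{lem2.1}, together with \eqref{l1-bdd-epsi}. So the real content is \eqref{3.1}.

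For \eqref{3.1}, the plan is to dominate the two signed reaction terms by their termwise absolute values and then bound each piece against the estimates already produced in Lemma \ref{lem2.1}, plus one extra cross-term bound. More precisely, since $u_\varepsilon,v_\varepsilon \geq 0$, one has the pointwise estimates
\be\label{plan-pt}
|u_\varepsilon(1-u_\varepsilon^{\theta-1}-v_\varepsilon)|\leq u_\varepsilon+u_\varepsilon^\theta+u_\varepsilon v_\varepsilon, \qquad |v_\varepsilon(1-v_\varepsilon-u_\varepsilon)|\leq v_\varepsilon+v_\varepsilon^2+u_\varepsilon v_\varepsilon.
\ee
Integrated in space and time, the terms $\int_0^T\!\!\int_\Omega u_\varepsilon$ and $\int_0^T\!\!\int_\Omega v_\varepsilon$ are controlled by \eqref{2.12}, \eqref{2.13} (multiplied by $T$), while $\int_0^T\!\!\int_\Omega u_\varepsilon^\theta$ and $\int_0^T\!\!\int_\Omega v_\varepsilon^2$ are exactly \eqref{2.14} and \eqref{2.15}. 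The only new ingredient is the mixed term $\int_0^T\!\!\int_\Omega u_\varepsilon v_\varepsilon$.

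To control the mixed term, the key observation is that the $v_\varepsilon$-equation itself provides the bound for free. Integrating the second equation of \eqref{2.10} over $\Omega$, using the no-flux boundary condition to kill the diffusion and taxis contributions, gives
\be\label{plan-vid}
\frac{d}{dt}\int_\Omega v_\varepsilon=\int_\Omega v_\varepsilon-\int_\Omega v_\varepsilon^2-\int_\Omega u_\varepsilon v_\varepsilon.
\ee
Integrating in time from $0$ to $T$, rearranging, and dropping the nonnegative quantities $\int_\Omega v_\varepsilon(\cdot,T)\geq 0$ and $\int_0^T\!\!\int_\Omega v_\varepsilon^2\geq 0$ yields
\[
\int_0^T\!\!\int_\Omega u_\varepsilon v_\varepsilon \leq \int_\Omega v_{0\varepsilon}+\int_0^T\!\!\int_\Omega v_\varepsilon \leq (1+\|v_0\|_{L^1})+m_2 T,
\]
by \eqref{l1-bdd-epsi} and \eqref{2.13}. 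Combining this with \eqref{plan-pt}, \eqref{2.12}--\eqref{2.15} delivers \eqref{3.1} with a constant $C_1$ independent of $\varepsilon$ and $T$.

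I do not anticipate any genuine obstacle here: the estimate is of the \emph{mass dissipation} type, obtained solely from testing the equations against $1$. No use of the chemotaxis structure (beyond cancellation under the Neumann boundary condition) or of \eqref{theta-large} is needed, which is consistent with the statement only requiring $\theta>1$. The mildly subtle point worth flagging in the write-up is that the absolute values in \eqref{3.1} force us to bound the cross term $\int u_\varepsilon v_\varepsilon$ on its own rather than exploit sign cancellation, and the cleanest way to do this is to read it off the $v_\varepsilon$-mass identity \eqref{plan-vid} instead of invoking a Young-type splitting, which for $\theta<2$ would demand control of $v_\varepsilon^{\theta/(\theta-1)}$ beyond what \eqref{2.15} supplies.
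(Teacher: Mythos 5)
Your proof is correct, and it takes a genuinely different route to \eqref{3.1} from the one in the paper. The paper works directly with $f_\varepsilon:=u_\varepsilon(1-u_\varepsilon^{\theta-1}-v_\varepsilon)$ and its positive/negative parts $f_\varepsilon^\pm$: the key observation is that $f_\varepsilon^+\leq 1$ pointwise (on the set where $u_\varepsilon^{\theta-1}+v_\varepsilon\leq 1$ one has $u_\varepsilon\leq 1$), so that writing $|f_\varepsilon|=2f_\varepsilon^+-f_\varepsilon$ and using the $u_\varepsilon$-mass identity $-\int_0^T\!\int_\Omega f_\varepsilon\leq 1+\|u_0\|_{L^1}$ immediately yields $\int_0^T\!\int_\Omega|f_\varepsilon|\leq 2|\Omega|T+1+\|u_0\|_{L^1}$, with the $v_\varepsilon$ term handled the same way. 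This keeps the whole damping term intact and thus never needs a standalone bound on $\int u_\varepsilon v_\varepsilon$. You instead dominate $|f_\varepsilon|$ termwise by $u_\varepsilon+u_\varepsilon^\theta+u_\varepsilon v_\varepsilon$, bound the first two integrals from \eqref{2.12}--\eqref{2.15}, and then extract the cross term $\int_0^T\!\int_\Omega u_\varepsilon v_\varepsilon$ from the $v_\varepsilon$-mass identity by dropping the nonnegative terms $\int_\Omega v_\varepsilon(\cdot,T)$ and $\int_0^T\!\int_\Omega v_\varepsilon^2$. Both arguments are of pure mass-dissipation type, need no chemotaxis structure, and work for all $\theta>1$; the paper's $2f^+-f$ decomposition is a bit slicker (one identity per species, no auxiliary cross-term estimate), while your version makes explicit which individual integrals are controlled and, as you rightly flag, shows why the $v_\varepsilon$-equation rather than a Young split must supply the $u_\varepsilon v_\varepsilon$ bound when $\theta<2$.

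One small note: the paper states that \eqref{3.3} follows from \eqref{2.13} and \eqref{2.14}; your citation of \eqref{2.12} and \eqref{2.13} is the intended one.
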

\begin{proof}
The estimate \eqref{3.3} follows directly from \eqref{2.13} and \eqref{2.14}.
 To show \eqref{3.1}, for simplicity, we let $f_\varepsilon=u_\varepsilon(1-u^{\theta-1}_\varepsilon -v_\varepsilon)$ and decompose  $f_\varepsilon=f_{\varepsilon}^+-f_{\varepsilon }^-$ where
\be\label{f+-def}
f_{\varepsilon }^+:=\max\{f_\varepsilon,0\}=\begin{cases} u_\varepsilon(1-u^{\theta-1}_\varepsilon -v_\varepsilon), & u^{\theta-1}_\varepsilon +v_\varepsilon\leq 1, \\[0.25cm]
0, & u^{\theta-1}_\varepsilon +v_\varepsilon> 1,
\end{cases} \leq 1
\ee
and
$$
f_{\varepsilon }^-:=\max\{-f_\varepsilon,0\}=\begin{cases} 0, & u^{\theta-1}_\varepsilon +v_\varepsilon\leq 1, \\[0.25cm]
-u_\varepsilon(1-u^{\theta-1}_\varepsilon -v_\varepsilon), & u^{\theta-1}_\varepsilon +v_\varepsilon> 1.
\end{cases}
$$
Integrating the first equation in \eqref{2.10} over $\Omega$ by parts, we get
$$
\frac{d}{dt}\int_{\Omega}u_\varepsilon
=\int_{\Omega}f_{\varepsilon}(u_\varepsilon,v_\varepsilon),
$$
which upon being integrated from $0$ to $T$ along with \eqref{l1-bdd-epsi}  yields
\be\label{3.5}
-\int_0^T\int_{\Omega}f_{\varepsilon}(u_\varepsilon,v_\varepsilon)=\int_{\Omega} u_{0\varepsilon}-\int_{\Omega} u_\varepsilon(\cdot, T)\leq 1+ \int_{\Omega} u_0.
\ee
Then, using \eqref{f+-def}, \eqref{3.5} and the fact $|f_{\varepsilon}|=f_{\varepsilon}^++f_{\varepsilon}^-=2f^+-f$, we infer
\be\label{3.9}
\begin{split}
\int^T_0\int_{\Omega}|f_{\varepsilon}(u_\varepsilon,v_\varepsilon)|&= 2\int^T_0\int_{\Omega}f_{\varepsilon}^+(u_\varepsilon,v_\varepsilon)
-\int^T_0\int_{\Omega}f_\varepsilon(u_\varepsilon,v_\varepsilon)\cr
&\leq 2|\Omega|T+1+\int_{\Omega}u_0,
\end{split} \ee
yielding the estimate for $u_\varepsilon$ in \eqref{3.1}. The estimates for $v_\varepsilon$ in \eqref{3.1} follows in a similar way via setting $\theta=2$.
\end{proof}

\begin{lemma}\label{lem3.2} The global solution of  \eqref{2.10} satisfies, for all $T>0$, that
\be\label{3.10}
(u_\varepsilon)_{\varepsilon\in(0,1)}  \text{ and }  (v_\varepsilon)_{\varepsilon\in(0,1)} \text{ are   uniformly integrable over }  \Omega\times(0,T).
\ee
\end{lemma}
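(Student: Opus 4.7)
The plan is to deduce uniform integrability directly from the higher-integrability bounds \eqref{2.14} and \eqref{2.15} already established in Lemma \ref{lem2.1}. The key observation is that $\theta>1$ (and trivially $2>1$ for the $v_\varepsilon$-estimate), so the families $(u_\varepsilon)$ and $(v_\varepsilon)$ are uniformly bounded in $L^{\theta}(\Omega\times(0,T))$ and $L^{2}(\Omega\times(0,T))$, respectively, by constants depending only on $T$ and the initial data. This extra integrability above $L^1$ is precisely what is needed to invoke the de~la~Vall\'ee~Poussin criterion, or equivalently, a one-line truncation argument.

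Concretely, I would argue by truncation. Fix $T>0$ and any measurable $E\subset\Omega\times(0,T)$. For $M>1$, split
\[
\int_E u_\varepsilon=\int_{E\cap\{u_\varepsilon\leq M\}} u_\varepsilon+\int_{E\cap\{u_\varepsilon>M\}} u_\varepsilon\leq M|E|+\frac{1}{M^{\theta-1}}\int_0^T\!\!\int_\Omega u_\varepsilon^{\theta},
\]
where the second piece uses $u_\varepsilon\leq M^{1-\theta}u_\varepsilon^{\theta}$ on $\{u_\varepsilon>M\}$. By \eqref{2.14}, the last integral is bounded by a constant $C(T)$ independent of $\varepsilon$. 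Given $\eta>0$, first choose $M$ so large that $C(T)/M^{\theta-1}<\eta/2$ (possible since $\theta>1$), and then pick $\delta=\eta/(2M)$, giving $\sup_\varepsilon\int_E u_\varepsilon<\eta$ whenever $|E|<\delta$. The same argument, with $\theta$ replaced by $2$ and \eqref{2.14} replaced by \eqref{2.15}, handles $(v_\varepsilon)$.

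I do not anticipate any serious obstacle: the statement is really a reformulation of the $L^{\theta}$-$L^{2}$ bounds from Lemma \ref{lem2.1} via a standard measure-theoretic principle. The only mild subtlety is remembering to combine the two contributions (small measure times bounded values, large values times the extra integrability) and to note that the constants remain $\varepsilon$-independent because $m_1$, $m_2$ and the initial-data norms in \eqref{l1-bdd-epsi} are. No further a priori information about $w_\varepsilon$ is required at this stage.
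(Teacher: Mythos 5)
Your proposal is correct and rests on exactly the same foundation as the paper's proof: the $\varepsilon$-independent bounds \eqref{2.14} and \eqref{2.15} on $\int_0^T\int_\Omega u_\varepsilon^\theta$ and $\int_0^T\int_\Omega v_\varepsilon^2$ from Lemma~\ref{lem2.1}. The paper gets from there to uniform integrability in one step via H\"older's inequality, $\int\int_E u_\varepsilon\leq(\int_0^T\int_\Omega u_\varepsilon^\theta)^{1/\theta}|E|^{(\theta-1)/\theta}$, whereas you use a truncation into $\{u_\varepsilon\leq M\}$ and $\{u_\varepsilon>M\}$; these are two standard, interchangeable routes to the same conclusion, so the argument is essentially the same.
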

\begin{proof}
The uniform integrability in \eqref{3.10} is implied directly from \eqref{2.14} and \eqref{2.15}. Indeed, given $\eta>0$ and $T>0$, we put
$$
\delta=\left(\frac{\eta^\theta}{m_1T+1+\|u_0\|_{L^1}}\right)^\frac{1}{\theta-1}.
$$
 Then for any measurable set $E\subset \Omega\times (0, T)$ with $|E|<\delta$, since $\theta>1$ we use H\"{o}lder's inequality to derive  from \eqref{2.14} that
 $$
 \int\int_E u_\varepsilon\leq  \left( \int_0^T\int_\Omega u_\varepsilon^\theta\right)^\frac{1}{\theta}|E|^\frac{\theta-1}{\theta}
 <\left(m_1T+1+\|u_0\|_{L^1}\right)^\frac{1}{\theta}\delta^\frac{\theta-1}{\theta}=\eta,
 $$
 giving rise to the uniform integrability of $(u_\varepsilon)_{\varepsilon\in(0,1)}$. Similarly,  the uniform integrability of $(v_\varepsilon)_{\varepsilon\in(0,1)}$ follows from \eqref{2.15}. Alternatively, one can also deduce \eqref{3.10} from \eqref{3.1} of Lemma \ref{lem3.1}.
\end{proof}
\begin{lemma}\label{lem3.3}
Let $p\in[1,\frac{N+2}{N+1})$. Then for any $T>0$, $(w_\varepsilon)_{\varepsilon\in(0,1)}$ is relatively compact with respect to the strong topology in $ L^p((0,T);W^{1,p}(\Omega))$.
\end{lemma}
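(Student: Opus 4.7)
The strategy is a standard Aubin--Lions--Simon compactness argument applied to the $w$-equation, refined at the end by a Vitali-type argument to obtain convergence in $W^{1,p}$ rather than only $L^p$.

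\textbf{Step 1 (Stronger boundedness).} I would fix an auxiliary exponent $p_0\in\bigl(p,\tfrac{N+2}{N+1}\bigr)$ and derive a uniform bound of $(w_\varepsilon)$ in $L^{p_0}(0,T;W^{1,p_0}(\Omega))$. Writing $w_\varepsilon$ via the variation-of-constants formula and applying the smoothing estimate
\[
\|\nabla e^{t\Delta}f\|_{L^{p_0}}\leq C\Bigl(1+t^{-\tfrac12-\tfrac{N}{2}(1-\tfrac1{p_0})}\Bigr)\|f\|_{L^1},
\]
together with the uniform $L^\infty(0,T;L^1)$-bound on $u_\varepsilon+v_\varepsilon$ from \eqref{2.12}--\eqref{2.13}, the choice $p_0<\tfrac{N+2}{N+1}$ is precisely what makes the resulting space-time convolution integrable, yielding the claim.

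\textbf{Step 2 (Time derivative).} Testing the third equation of \eqref{2.10} against $\varphi\in W^{1,\sigma}(\Omega)$ with $\sigma>N$ (so that $W^{1,\sigma}\hookrightarrow L^\infty$), and combining Step~1 with the $L^\infty(L^r)$-bound on $w_\varepsilon$ from \eqref{w-bdd} and the $L^\infty(L^1)$-bound on the saturated source, I would get
\[
\bigl(\partial_t w_\varepsilon\bigr)_{\varepsilon\in(0,1)}\ \text{bounded in}\ L^1\bigl(0,T;(W^{1,\sigma}(\Omega))^*\bigr).
\]

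\textbf{Step 3 (Compactness and upgrade).} An application of the Aubin--Lions--Simon lemma with the compact--continuous chain $W^{1,p_0}(\Omega)\hookrightarrow\hookrightarrow L^{p_0}(\Omega)\hookrightarrow(W^{1,\sigma}(\Omega))^*$ provides, along a subsequence, $w_\varepsilon\to w$ strongly in $L^{p_0}(0,T;L^{p_0}(\Omega))$ and a.e.\ in $\Omega\times(0,T)$. To upgrade to strong convergence of gradients, I would argue: by Lemma~\ref{lem3.2} and Dunford--Pettis, $\frac{u_\varepsilon+v_\varepsilon}{1+\varepsilon(u_\varepsilon+v_\varepsilon)}$ is relatively weakly compact in $L^1(\Omega\times(0,T))$; combined with the convergence \eqref{2.11} of $w_{0\varepsilon}$ in $L^r$, passing to the limit in the Duhamel representation of $\nabla w_\varepsilon$ extracts a further subsequence with $\nabla w_\varepsilon\to\nabla w$ a.e. The uniform $L^{p_0}$-bound on $\nabla w_\varepsilon$ from Step~1 with $p_0>p$ yields equi-integrability of $|\nabla w_\varepsilon|^p$, and Vitali's convergence theorem then delivers strong convergence in $L^p(0,T;W^{1,p}(\Omega))$.

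\textbf{Main obstacle.} The genuinely delicate point is the final upgrade from $L^p(L^p)$-compactness of $w_\varepsilon$ (which Aubin--Lions gives for free) to the target $L^p(W^{1,p})$-compactness. The obstruction is the singular behavior of the heat-kernel gradient $\nabla e^{t\Delta}$ near $t=0$, and overcoming it requires both the $L^1$ equi-integrability of the source (Lemma~\ref{lem3.2}) to control the singular part via Dunford--Pettis, and the slack between the working exponent $p_0$ and the critical exponent $\tfrac{N+2}{N+1}$ to allow the Vitali-type upgrade at the final $L^p$ level.
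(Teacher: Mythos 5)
Your Steps 1 and 2 are sound and close in spirit to what the paper does: bound $w_\varepsilon$ with a bit of room to spare, control $\partial_t w_\varepsilon$ in a dual space, and invoke Aubin--Lions. However, Step 3 contains a genuine gap, and it is exactly where the real difficulty lies. The Aubin--Lions triple you propose, $W^{1,p_0}(\Omega)\hookrightarrow\hookrightarrow L^{p_0}(\Omega)\hookrightarrow(W^{1,\sigma}(\Omega))^*$, only yields precompactness of $(w_\varepsilon)$ in $L^{p_0}((0,T);L^{p_0}(\Omega))$; to reach $L^p((0,T);W^{1,p}(\Omega))$ you would need the \emph{middle} space to compactly embed into $W^{1,p}(\Omega)$, and $W^{1,p_0}(\Omega)\hookrightarrow W^{1,p}(\Omega)$ for $p_0>p$ is merely continuous (it is just H\"older on $\Omega$ applied to functions and gradients; rapidly oscillating sequences like $n^{-1}\sin(nx_1)$ show it is not compact). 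Your attempted repair --- extracting a.e.\ convergence of $\nabla w_\varepsilon$ by ``passing to the limit in the Duhamel representation'' using only Dunford--Pettis weak $L^1$ compactness of the saturated source --- does not work: for fixed $(x,t)$ the kernel $(y,s)\mapsto\nabla_x G(x,y,t-s)\mathbf{1}_{[0,t]}(s)$ is unbounded, so testing a weakly-$L^1$-convergent sequence against it does not pass to the limit, and weak convergence of the source gives at best weak convergence of $\nabla w_\varepsilon$, not a.e.\ convergence. (Strong $L^1$ convergence of $u_\varepsilon+v_\varepsilon$ would of course fix this, but at this stage of the paper only uniform integrability is available, and indeed the a.e.\ convergence of $u_\varepsilon,v_\varepsilon$ is derived later in Lemma \ref{lem3.9} \emph{using} Lemma \ref{lem3.3}, so invoking it here would be circular.)

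The paper sidesteps the entire upgrade problem by choosing a better top space in the Aubin--Lions triple: it sets $A=-\Delta+1$ with Neumann boundary conditions in $L^p(\Omega)$ and works with the fractional power domain $D(A^\eta)$ for some $\eta\in(\tfrac12,\tfrac{N+2-Np}{2p})$, which is admissible precisely because $p<\tfrac{N+2}{N+1}$. Since $\eta>\tfrac12$, one has the compact embedding $D(A^\eta)\hookrightarrow\hookrightarrow W^{1,p}(\Omega)$, so once $\|A^\eta w_\varepsilon\|_{L^p((0,T);L^p(\Omega))}$ is bounded (via the same semigroup smoothing estimate you use, now with the fractional power) and $\partial_t w_\varepsilon$ is bounded in $L^p((0,T);(W^{1,p/(p-1)}(\Omega))^*)$, Aubin--Lions delivers relative compactness \emph{directly} in $L^p((0,T);W^{1,p}(\Omega))$ with no Vitali upgrade needed. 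In short: the missing ingredient in your argument is a fractional Sobolev gain above order one; replacing $W^{1,p_0}$ by $D(A^\eta)$ (or by a Sobolev--Slobodeckij space $W^{1+\delta,p}$) is precisely what makes the compactness land in the right space.
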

\begin{proof} The essential idea has been
illustrated in  \cite[Lemma 5.1]{33}.  We here alternatively show the short  proof. We proceed with no loss of
 generality that $p>1$, and then, we pick a constant $\eta$ fulfilling
\be\label{3.25}
\frac{1}{2}<\eta<\frac{N+2-Np}{2p}\Longrightarrow 1-\eta-\frac{N}{2}(1-\frac{1}{p})>0, \  1-\left[\eta+\frac{N}{2}(1-\frac{1}{p})\right]p>0.
\ee
Let $A$ be  the realization of $-\Delta+1$ under the homogeneous Neumann boundary condition in $L^p(\Omega)$. Then the fact $\eta>\frac{1}{2}$ shows that the domain $D(A^\eta)$ of its fractional power $A^\eta$ satisfies
 \be\label{fractional}
D(A^\eta) \text{ is compactly embedded into } W^{1,p}(\Omega).
\ee
For $T>0$, by the semigroup representation of  $w_\varepsilon$-equation in \eqref{2.10} together with the well-known smoothing properties of heat semigroup $(e^{-tA})_{t\geq0}$ (cf. \cite{42}), thanks to \eqref{3.25} and the $L^1$-boundedness information in \eqref{l1-bdd-epsi} and \eqref{3.3},  there exist positive constants $c_1=c_1(p)$ and  $c_2=c_2(p)$  such that, for  all $t\in(0,T)$ and each $\varepsilon\in(0,1)$,
\begin{equation*}
\begin{split}
\|A^\eta w_\varepsilon(\cdot,t)\|_{L^p}&= \Bigl\|A^\eta e^{-tA}w_{0\varepsilon}+\int^t_0A^\eta e^{-(t-s)A}\frac{(u_{\varepsilon}+ v_{\varepsilon})(\cdot,s)}{1+\varepsilon( u_{\varepsilon}+ v_{\varepsilon})(\cdot,s)}ds\Bigl\|_{L^p}\cr
&\leq c_1t^{-\eta-\frac{N}{2}(1-\frac{1}{p})}\|w_{0\varepsilon}\|_{L^1}\cr
&\   +c_1\int^t_0(t-s)^{-\eta-\frac{N}{2}(1-\frac{1}{p})}\Bigl\|\frac{(u_{\varepsilon}+ v_{\varepsilon})(\cdot,s)}{1+\varepsilon( u_{\varepsilon}+ v_{\varepsilon})(\cdot,s)}\Bigl\|_{L^1}ds\cr
&\leq c_2t^{-\eta-\frac{N}{2}(1-\frac{1}{p})}+c_2\int^t_0(t-s)^{-\eta-\frac{N}{2}(1-\frac{1}{p})}ds\cr
&\leq c_2t^{-\eta-\frac{N}{2}(1-\frac{1}{p})}+\frac{c_2}{1-\eta-\frac{N}{2}(1-\frac{1}{p})}
T^{1-\eta-\frac{N}{2}(1-\frac{1}{p})},
\end{split}
\end{equation*}
which upon being integrated from $0$ to $T$ and a use of \eqref{3.25} allows us to see
\be\label{3.27}
\begin{split}
&\int^T_0\|A^\eta w_\varepsilon(\cdot,t)\|^p_{L^p}\cr
&\leq\frac{2^{p-1}c^p_2T^{1-p[\eta+\frac{N}{2}(1-\frac{1}{p})]}}{1-p[\eta+\frac{N}{2}(1-\frac{1}{p})]}
+2^{p-1}T\left(\frac{c_2T^{1-\eta-\frac{N}{2}(1-\frac{1}{p})}}{1-\eta
-\frac{N}{2}(1-\frac{1}{p})}\right)^p, \ \ \forall \varepsilon\in(0,1).
\end{split} \ee
On the other hand, for  $\varphi\in C^\infty(\overline{\Omega})$ and $t>0$, we deduce from \eqref{2.10} that
\begin{equation*}
\begin{split}
\left|\int_{\Omega}w_{\varepsilon t}(\cdot,t)\varphi\right|&=\left|-\int_{\Omega}\nabla w_\varepsilon\cdot\nabla\varphi-\int_{\Omega} w_\varepsilon\varphi+\int_{\Omega}\frac{( u_{\varepsilon}+  v_{\varepsilon})}{1+\varepsilon(  u_{\varepsilon}+ v_{\varepsilon})}\varphi\right|\\[0.25cm]
&\leq\|\nabla w_\varepsilon\|_{L^p}\|\nabla\varphi\|_{L^{\frac{p}{p-1}}}+\left( \|u_\varepsilon\|_{L^1} + \|v_\varepsilon\|_{L^1}+\|w_\varepsilon\|_{L^1}\right)\|\varphi\|_{L^\infty}.
\end{split}
\end{equation*}
 Notice that our assumption on $p$ implies  $\frac{p}{p-1}>N+2$,  the Sobolev embedding shows that
$W^{1,\frac{p}{p-1}}(\Omega)\hookrightarrow L^\infty(\Omega)$. Thus, there exists $c_3=c_3(p)>0$ such that, for all $t>0$ and any $\varepsilon\in(0,1)$,
$$
\|w_{\varepsilon t}(\cdot,t)\|_{(W^{1,\frac{p}{p-1}})^\ast}\leq c_3\left(\|\nabla w_\varepsilon\|_{L^p}+\|w_\varepsilon\|_{L^1} +\|u_\varepsilon\|_{L^1}+\|v_\varepsilon\|_{L^1}\right).
$$
In light of   \eqref{3.3}, \eqref{fractional} and \eqref{3.27},  for every $T>0$, we infer
$$
(w_{\varepsilon t})_{\varepsilon\in(0,1)}\text{ is bounded in }\ L^p((0,T);(W^{1,\frac{p}{p-1}}(\Omega))^*),
$$
which along with  the Aubin-Lions lemma,  \eqref{fractional} and \eqref{3.27}  concludes the proof.
\end{proof}

\begin{lemma}\label{lem3.4}
Let $\theta>\frac{2N-2}{N}$. Then for all $T>0$ there exists $C_3>0$ such that
\be\label{vgradw-bdd}
\int^T_0\int_{\Omega}\frac{v_\varepsilon^2}{(1+v_\varepsilon)^2}|\nabla w_\varepsilon|^2\leq C_3(1+T), \ \ \  \ \forall \varepsilon\in(0,1)
\ee
and
\be\label{3.12}
\int^T_0\int_{\Omega}|\nabla \ln(1+v_{\varepsilon})|^2=\int^T_0\int_{\Omega}\frac{|\nabla v_{\varepsilon}|^2}{(v_{\varepsilon}+1)^2}\leq C_3(1+T), \ \ \forall \varepsilon\in(0,1).
\ee
\end{lemma}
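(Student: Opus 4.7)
The plan is to produce both estimates simultaneously by testing only the $v_\varepsilon$-equation. Multiplying $v_{\varepsilon t}=\Delta v_\varepsilon-\nabla\cdot(v_\varepsilon\nabla w_\varepsilon)+v_\varepsilon(1-v_\varepsilon-u_\varepsilon)$ by $\frac{1}{1+v_\varepsilon}$, integrating over $\Omega$, and using the Neumann boundary condition yields the identity
\begin{equation*}
\frac{d}{dt}\int_\Omega \ln(1+v_\varepsilon)=\int_\Omega\frac{|\nabla v_\varepsilon|^2}{(1+v_\varepsilon)^2}-\int_\Omega \frac{v_\varepsilon\nabla v_\varepsilon\cdot\nabla w_\varepsilon}{(1+v_\varepsilon)^2}+\int_\Omega\frac{v_\varepsilon(1-v_\varepsilon-u_\varepsilon)}{1+v_\varepsilon}.
\end{equation*}
Writing the cross term as $\int_\Omega \frac{\nabla v_\varepsilon}{1+v_\varepsilon}\cdot\frac{v_\varepsilon\nabla w_\varepsilon}{1+v_\varepsilon}$ and applying $ab\le \frac12 a^2+\frac12 b^2$, I can absorb half of $\int|\nabla v_\varepsilon|^2/(1+v_\varepsilon)^2$ into the left-hand side. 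Integrating from $0$ to $T$, using $\int\ln(1+v_\varepsilon(T))\le\int v_\varepsilon(T)\le m_2$ by \eqref{2.13}, and $|v_\varepsilon(1-v_\varepsilon-u_\varepsilon)/(1+v_\varepsilon)|\le u_\varepsilon+v_\varepsilon+1$ together with \eqref{2.12}--\eqref{2.13}, I would obtain
\begin{equation*}
\int_0^T\int_\Omega\frac{|\nabla v_\varepsilon|^2}{(1+v_\varepsilon)^2}\le C(1+T)+\int_0^T\int_\Omega\frac{v_\varepsilon^2|\nabla w_\varepsilon|^2}{(1+v_\varepsilon)^2}.
\end{equation*}

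Since $\frac{v_\varepsilon^2}{(1+v_\varepsilon)^2}\le 1$, both \eqref{vgradw-bdd} and \eqref{3.12} will follow in a single stroke once the gradient estimate $\int_0^T\int|\nabla w_\varepsilon|^2\le C(1+T)$ is in hand. To derive it I would test the $w_\varepsilon$-equation by $w_\varepsilon$ to arrive at $\frac12\partial_t\int w_\varepsilon^2+\int|\nabla w_\varepsilon|^2+\int w_\varepsilon^2\le \int(u_\varepsilon+v_\varepsilon)w_\varepsilon$. The $v_\varepsilon w_\varepsilon$ contribution is harmless by Cauchy--Schwarz, the $L^2_{t,x}$-bound \eqref{2.15} for $v_\varepsilon$, and the uniform $L^r$-bound \eqref{w-bdd} for $w_\varepsilon$ with $r\ge 2$. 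For the $u_\varepsilon w_\varepsilon$ contribution I would use H\"older, $\int u_\varepsilon w_\varepsilon\le \|u_\varepsilon\|_{L^\theta}\|w_\varepsilon\|_{L^{\theta/(\theta-1)}}$, and then raise the $L^r$-regularity of $w_\varepsilon$ to the exponent $\theta/(\theta-1)$ via the Gagliardo--Nirenberg inequality of Lemma \ref{lem2.5}, interpolating between $\|w_\varepsilon\|_{L^r}$ and $\|\nabla w_\varepsilon\|_{L^2}$. A final Young's inequality with a small parameter allows the resulting power of $\|\nabla w_\varepsilon\|_{L^2}$ to be absorbed into the left-hand side, while the remaining space-time integrals are controlled by \eqref{2.14}--\eqref{2.15} and \eqref{w-bdd}.

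The delicate step is the last one: in the subcritical range $\theta\in(\frac{2N-2}{N},2)$, $u_\varepsilon$ belongs only to $L^\theta_{t,x}$, and one must choose the Gagliardo--Nirenberg exponent $a$ so that after Young's inequality the power of $\|\nabla w_\varepsilon\|_{L^2}$ does not exceed $2$. A short computation shows this is possible precisely when $r\ge N(2-\theta)/(2(\theta-1))$, which is exactly what Lemma \ref{lem2.1} grants under the hypothesis $\theta>\frac{2N-2}{N}$. In this sense the threshold \eqref{theta-large} is not merely convenient but essentially sharp for the scheme to close, and it is the single place where the restriction on $\theta$ is consumed in the argument.
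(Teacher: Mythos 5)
Your proposal follows essentially the same route as the paper: test the $w_\varepsilon$-equation against $w_\varepsilon$, control $\int_\Omega u_\varepsilon w_\varepsilon$ by H\"older/Young combined with Gagliardo--Nirenberg and the $\varepsilon$-uniform bound \eqref{w-bdd}, absorb the resulting power of $\|\nabla w_\varepsilon\|_{L^2}$, and thus obtain $\int_0^T\int_\Omega |\nabla w_\varepsilon|^2\le C(1+T)$; the $v$-estimate \eqref{3.12} then comes from testing the $v_\varepsilon$-equation against $(1+v_\varepsilon)^{-1}$ and using $\frac{v_\varepsilon^2}{(1+v_\varepsilon)^2}\le 1$. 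Your cosmetic reordering (deriving the conditional $v$-inequality first and then closing it with the $\nabla w_\varepsilon$-bound) is equivalent, and your identification of the delicate step --- tuning the Gagliardo--Nirenberg exponent so that Young's inequality leaves a power of $\|\nabla w_\varepsilon\|_{L^2}$ no larger than $2$ --- is precisely where \eqref{theta-large} enters in the paper as well.

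There is, however, one genuine slip in your treatment of the $v_\varepsilon w_\varepsilon$ term. You invoke a uniform $L^2$-bound on $w_\varepsilon$ from \eqref{w-bdd} ``with $r\ge 2$'', but \eqref{w-bdd} only holds for exponents in the range \eqref{w-epsi-bdd}, and for $N\ge 4$ that range tops out at $\max\left\{1,\ \frac{N(2-\theta)}{2(\theta-1)}\right\}$, which is strictly less than $2$ whenever $\theta>\frac{2N-2}{N}$. The $r$ in \eqref{1.2} measures only the integrability assumed on the initial datum $w_0$; the $\varepsilon$-uniform bound on $w_\varepsilon(\cdot,t)$ is constrained by the $L^1$-to-$L^p$ semigroup smoothing and cannot reach $L^2$ in those dimensions. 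The paper avoids this by also pushing $\int_\Omega w_\varepsilon^2$ through Gagliardo--Nirenberg (inequality \eqref{gn2}, the $\theta=2$ specialization of \eqref{3.15}), bounding it by $c_4\|\nabla w_\varepsilon\|_{L^2}^2+c_4$ and absorbing for small $\eta$ --- the same mechanism you already deploy for $\int_\Omega u_\varepsilon w_\varepsilon$. Applying it to the $v_\varepsilon w_\varepsilon$ term too closes your argument in all dimensions $N\ge 2$.
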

\begin{proof}
 Multiplying the third equation in \eqref{2.10} by $w_{\varepsilon}$, integrating by parts and using the Young inequality, we have, for any $\eta>0$,
\be\label{3.14}
\begin{split}
&\frac{1}{2}\frac{d}{dt}\int_{\Omega}w_{\varepsilon}^2+\int_{\Omega}|\nabla w_\varepsilon|^2+\int_{\Omega}w_\varepsilon^2\\
&\leq \int_{\Omega} u_{\varepsilon} w_{\varepsilon}+\int_{\Omega} v_{\varepsilon} w_{\varepsilon}\\
&\leq \eta \int_{\Omega}  w_{\varepsilon}^\frac{\theta}{\theta-1}+\frac{1}{\theta}
\left(\frac{\theta-1}{\theta\eta}\right)^{\theta-1} \int_{\Omega} u_{\varepsilon}^\theta+\eta \int_{\Omega}  w_{\varepsilon}^2+\frac{1}{4\eta}\int_{\Omega}  v_{\varepsilon}^2.
\end{split} \ee
Now, we put
$$
p=\max\left\{1,  \ \  \frac{N(2-\theta)}{2(\theta-1)}\right\}.
$$
Then thanks to our assumption $\theta>\frac{2N-2}{N}(\geq \frac{2N}{N+2})$, we have
$$
  p<\min\left\{\frac{N}{(N-2)^+}, \ \ \frac{\theta}{\theta-1}\right\}, \ \ \  \frac{2N[\theta-(\theta-1)p]}{(\theta-1)[2N-(N-2)p]}\leq 2.
$$
With these information at hand, based on the $\varepsilon$-independent bound  \eqref{w-bdd},  we  infer from   the Gagliardo-Nirenberg inequality (cf. Lemma \ref{lem2.5}) and Young's inequality  that
\be\label{3.15}
\begin{split}
\|w_{\varepsilon}\|_{L^\frac{\theta}{\theta-1}}
^\frac{\theta}{\theta-1}&\leq c_1\|\nabla w_{\varepsilon}\|_{L^2}^\frac{2N[\theta-(\theta-1)p]}{(\theta-1)[2N-(N-2)p]}\| w_{\varepsilon}\|_{L^p}^\frac{[(N+2)\theta-2N]p}{(\theta-1)[2N-(N-2)p]}+c_1\| w_{\varepsilon}\|_{L^1}^\frac{\theta}{\theta-1}\\
&\leq c_2\|\nabla w_{\varepsilon}\|_{L^2}^\frac{2N[\theta-(\theta-1)p]}{(\theta-1)[2N-(N-2)p]}+c_1\| w_{\varepsilon}\|_{L^1}^\frac{\theta}{\theta-1}\\
&\leq c_3\|\nabla w_{\varepsilon}\|_{L^2}^2+c_3.
\end{split} \ee
In a similar manner, or, simply  applying \eqref{3.15} with $\theta=2$, we have
\be\label{gn2}
\int_{\Omega}  w_{\varepsilon}^2\leq c_4\|\nabla w_{\varepsilon}\|_{L^2}^2+c_4.
\ee
Substituting \eqref{3.15} and \eqref{gn2} into \eqref{3.14} and then fixing sufficiently small $\eta>0$, we conclude  that
\be\label{grad w}
\frac{d}{dt}\int_{\Omega}w_{\varepsilon}^2+\int_{\Omega}|\nabla w_\varepsilon|^2+\int_{\Omega}w_\varepsilon^2\leq c_5+c_5\int_{\Omega} u_{\varepsilon}^\theta+c_5\int_{\Omega}  v_{\varepsilon}^2.
 \ee
An integration of \eqref{grad w} from $0$ to $T$ and uses of \eqref{l1-bdd-epsi},  \eqref{2.14} and \eqref{2.15} entail that
\be\label{gradw-est}
\int^T_0\int_{\Omega}|\nabla w_\varepsilon|^2\leq c_6\left(T+ \int^T_0\int_{\Omega}u_{\varepsilon}^\theta+ \int^T_0 \int_{\Omega}v_{\varepsilon}^2+\int_{\Omega}w_{0\varepsilon}^2\right)\leq c_7(1+T).
\ee
This directly gives rise to the estimate \eqref{vgradw-bdd}.

Next, multiplying the second equation in \eqref{2.10} by $\frac{1}{v_{\varepsilon}+1}$, integrating by parts and using Young's inequality, we obtain that
\be\label{3.16}
\begin{split}
&\frac{d}{dt}\int_{\Omega}\ln(v_{\varepsilon}+1)\\
&= \int_{\Omega}\frac{|\nabla v_{\varepsilon}|^2}{(v_{\varepsilon}+1)^2}-\int_{\Omega}
\frac{v_{\varepsilon}}{(v_{\varepsilon}+1)^2}\nabla v_{\varepsilon}\nabla w_{\varepsilon}
+\int_{\Omega}\frac{v_{\varepsilon}}{v_{\varepsilon}+1}(1-v_{\varepsilon}-u_{\varepsilon})\cr
&\geq \frac{1}{2}\int_{\Omega}\frac{|\nabla v_{\varepsilon}|^2}{(v_{\varepsilon}+1)^2}-\frac{1}{2}\int_{\Omega} \frac{v^2_{\varepsilon}}{(v_{\varepsilon}+1)^2}|\nabla w_{\varepsilon}|^2-\int_{\Omega}\frac{u_{\varepsilon}v_{\varepsilon}}{v_{\varepsilon}+1}- \int_{\Omega}\frac{v^2_{\varepsilon}}{v_{\varepsilon}+1}\cr
&\geq \frac{1}{2}\int_{\Omega}\frac{|\nabla v_{\varepsilon}|^2}{(v_{\varepsilon}+1)^2}-\frac{1}{2}\int_{\Omega}|\nabla w_{\varepsilon}|^2- \int_{\Omega}u_{\varepsilon}- \int_{\Omega}v_{\varepsilon}.
\end{split} \ee
Integrating  \eqref{3.16} from $0$ to $T>0$ and employing the $\varepsilon$-independent bounds in  \eqref{2.12}, \eqref{2.13},  \eqref{gradw-est} and the simple fact $\ln (1+z)\leq z$ for $z\geq0$, we end up with
\begin{equation*}
\begin{split}
&\int^T_0\int_{\Omega}\frac{|\nabla v_{\varepsilon}|^2}{(v_{\varepsilon}+1)^2}\\
&\leq2\int_{\Omega}\ln\left(1+v_{\varepsilon}(\cdot,T)\right)
+\int_0^T\int_{\Omega}|\nabla w_{\varepsilon}|^2+2\int_0^T \int_{\Omega}u_{\varepsilon}+2\int_0^T \int_{\Omega}v_{\varepsilon}\\
&\leq 2\int_{\Omega} v_{\varepsilon}(\cdot,T)
+\int_0^T\int_{\Omega}|\nabla w_{\varepsilon}|^2+2\int_0^T \int_{\Omega}u_{\varepsilon}+2\int_0^T \int_{\Omega}v_{\varepsilon}\\
&\leq c_8(1+T),
\end{split}
\end{equation*}
which is our desired $\varepsilon$-independent bound \eqref{3.12}.
\end{proof}

To  prepare ingredients in Definition \ref{def2.1}, we proceed to compute the corresponding expressions arising in \eqref{2.5}, \eqref{2.7}, \eqref{2.8} and  \eqref{2.9}.
\begin{lemma}\label{lem3.5}
Let $p>0$ and $k>0$, and define
\be\label{3.17}
\phi(s):=(s+1)^{-p},\  \Phi(s):=-2\sqrt{\frac{p+1}{p}}(s+1)^{-\frac{p}{2}}, \   \xi(\tilde{s}):=e^{-k\tilde{s}},\   s,  \tilde{s}\geq0.
\ee
Then
$$
\Phi'(s)=\sqrt{\phi''(s)}, \ \  \forall \ s\geq0,
$$
also, for all $s\geq0$ and $\tilde{s}\geq0$,
\begin{equation*}
\begin{split}
&\frac{\phi'(s)}{\sqrt{\phi''(s)}}\cdot\frac{\xi'(\tilde{s})}{\sqrt{\xi(\tilde{s})}}
-\frac{1}{2}\Phi(s)\frac{\xi'(\tilde{s})}{\sqrt{\xi(\tilde{s})}}
-\frac{1}{2}s\sqrt{\phi''(s)}\cdot\sqrt{\xi(\tilde{s})}\cr
&=-\frac{2k+ p(p+1)\frac{s}{s+1}}{2\sqrt{p(p+1)}}(s+1)^{-\frac{p}{2}}e^{-\frac{k\tilde{s}}{2}}
\end{split} \end{equation*}
and
\begin{equation*}
\begin{split}
&\phi(s)\xi''(\tilde{s})-\frac{\phi'(s)}{\phi''(s)}\cdot\frac{\xi'^2(\tilde{s})}{\xi(\tilde{s})}
-\frac{1}{4}s^2\phi''(s)
\xi(\tilde{s})\\
&=\frac{4k^2- p(p+1)^2\frac{s^2}{(s+1)^2}}{4(p+1)}(s+1)^{-p}e^{-k\tilde{s}}
\end{split}
 \end{equation*}
as well as
$$
\frac{\phi'(s)}{\sqrt{\phi''(s)}}\sqrt{\xi(\tilde{s})}=(s+1)^{-\frac{p}{2}}e^{-\frac{k\tilde{s}}{2}}
$$
and
$$
s\phi'(s)\xi(\tilde{s})-\phi(s)\xi'(\tilde{s})
+\frac{1}{2}\frac{\Phi(s)\phi'(s)}{\sqrt{\phi''(s)}}\xi'(\tilde{s})=-p s(s+1)^{-p-1}e^{-k\tilde{s}}.
$$
\end{lemma}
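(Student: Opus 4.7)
The statement is essentially a collection of algebraic identities obtained by plugging the explicit ansatz \eqref{3.17} into the compound expressions that appear on the right-hand side of the evolution identity \eqref{2.16}, so my plan is purely computational: differentiate once, substitute, and simplify. Specifically, I would first record the elementary derivatives
\[
\phi'(s)=-p(s+1)^{-p-1},\qquad \phi''(s)=p(p+1)(s+1)^{-p-2},\qquad \xi'(\tilde s)=-ke^{-k\tilde s},\qquad \xi''(\tilde s)=k^{2}e^{-k\tilde s},
\]
together with the two building blocks
\[
\frac{\phi'(s)}{\sqrt{\phi''(s)}}=-\sqrt{\tfrac{p}{p+1}}\,(s+1)^{-p/2},\qquad \frac{\xi'(\tilde s)}{\sqrt{\xi(\tilde s)}}=-k\,e^{-k\tilde s/2}.
\]
Then $\Phi'(s)=-2\sqrt{(p+1)/p}\cdot(-p/2)(s+1)^{-p/2-1}=\sqrt{p(p+1)}\,(s+1)^{-p/2-1}=\sqrt{\phi''(s)}$, which dispatches the first identity.

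For the mixed gradient coefficient, I would add the three summands contributing to it: the first is $k\sqrt{p/(p+1)}\,(s+1)^{-p/2}e^{-k\tilde s/2}$, the second equals $-k\sqrt{(p+1)/p}\,(s+1)^{-p/2}e^{-k\tilde s/2}$ (using $\Phi(s)=-2\sqrt{(p+1)/p}(s+1)^{-p/2}$), and the third is $-\tfrac12 s\sqrt{p(p+1)}\,(s+1)^{-p/2-1}e^{-k\tilde s/2}$. The first two collapse via $\sqrt{p/(p+1)}-\sqrt{(p+1)/p}=-1/\sqrt{p(p+1)}$, and factoring out $-(s+1)^{-p/2}e^{-k\tilde s/2}/(2\sqrt{p(p+1)})$ from what remains produces the stated combination $2k+p(p+1)\frac{s}{s+1}$. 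The pure-$|\nabla w|^{2}$ coefficient is even easier: $\phi(s)\xi''(\tilde s)=k^{2}(s+1)^{-p}e^{-k\tilde s}$, the middle piece reduces to $\tfrac{p}{p+1}k^{2}(s+1)^{-p}e^{-k\tilde s}$ using $(\phi')^{2}/\phi''=\tfrac{p}{p+1}(s+1)^{-p}$, and the final piece is $\tfrac14 s^{2}p(p+1)(s+1)^{-p-2}e^{-k\tilde s}$; a common denominator of $4(p+1)$ assembles the claimed numerator $4k^{2}-p(p+1)^{2}s^{2}/(s+1)^{2}$.

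The remaining two formulas are one-liners. For $\frac{\phi'(s)}{\sqrt{\phi''(s)}}\sqrt{\xi(\tilde s)}$ I would just multiply the two building blocks recorded above to get $(s+1)^{-p/2}e^{-k\tilde s/2}$ (the sign check uses $\phi'<0$, handled by a single minus on $\Phi$). For the gradient-$w$ coefficient inside the test identity I would combine $s\phi'(s)\xi(\tilde s)=-ps(s+1)^{-p-1}e^{-k\tilde s}$, $-\phi(s)\xi'(\tilde s)=k(s+1)^{-p}e^{-k\tilde s}$ and $\tfrac12\Phi(s)\phi'(s)\xi'(\tilde s)/\sqrt{\phi''(s)}=-k(s+1)^{-p}e^{-k\tilde s}$ (the last follows from $\Phi(s)\phi'(s)/\sqrt{\phi''(s)}=2(s+1)^{-p}$); the two $(s+1)^{-p}$ terms cancel exactly and only $-ps(s+1)^{-p-1}e^{-k\tilde s}$ survives. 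There is no real obstacle: the only subtlety is bookkeeping of signs and powers of $p+1$, in particular the neat cancellation $\sqrt{p/(p+1)}-\sqrt{(p+1)/p}=-1/\sqrt{p(p+1)}$ in step two and the exact cancellation of the two $(s+1)^{-p}e^{-k\tilde s}$ terms in the last identity, both of which are what make this specific choice of $(\phi,\xi,\Phi)$ the right one for matching the structural conditions \eqref{2.5}--\eqref{2.9}.
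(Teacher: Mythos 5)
Your approach is exactly what the paper intends: the paper disposes of this lemma in one line (``direct calculations, cf.\ \cite[Lemma 6.1]{33}''), and you simply carry them out. Your handling of $\Phi'=\sqrt{\phi''}$, the mixed-gradient coefficient (including the useful simplification $\sqrt{p/(p+1)}-\sqrt{(p+1)/p}=-1/\sqrt{p(p+1)}$), the pure-$|\nabla w|^2$ coefficient, and the last cancellation are all correct. You also correctly read the third displayed identity as involving $\phi'^2/\phi''$, as in \eqref{2.16}, even though the lemma statement as printed drops a square on $\phi'$; that is a typo in the paper, and you used the right quantity.

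One point needs attention: in the fourth identity you claim $\frac{\phi'(s)}{\sqrt{\phi''(s)}}\sqrt{\xi(\tilde s)}=(s+1)^{-p/2}e^{-k\tilde s/2}$, matching the paper, but this is not what your own recorded building block gives. Since $\frac{\phi'(s)}{\sqrt{\phi''(s)}}=-\sqrt{\tfrac{p}{p+1}}(s+1)^{-p/2}$ and $\sqrt{\xi(\tilde s)}=e^{-k\tilde s/2}$, the product is $-\sqrt{\tfrac{p}{p+1}}(s+1)^{-p/2}e^{-k\tilde s/2}$; the factor $-\sqrt{p/(p+1)}$ does not disappear, and the parenthetical remark about a ``single minus on $\Phi$'' has no bearing here because $\Phi$ does not occur in this expression. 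This is again a slip in the paper's statement rather than a substantive error: what is actually used downstream in \eqref{3.23} is $\frac{\phi'(u)}{\sqrt{\phi''(u)}}\sqrt{\xi(w)}\cdot\nabla\bigl(\Phi(u)\sqrt{\xi(w)}\bigr)$, and there the $-\sqrt{p/(p+1)}$ combines with the $-2\sqrt{(p+1)/p}$ hidden in $\Phi$ to give the clean coefficient $2z_\varepsilon^{1/2}\nabla z_\varepsilon^{1/2}$. But you should record the correct value $-\sqrt{p/(p+1)}(s+1)^{-p/2}e^{-k\tilde s/2}$ rather than reproduce the misprint, and flag it as such.
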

\begin{proof}
The conclusion is derived by direct calculations, cf.  \cite[Lemma 6.1]{33}.
\end{proof}
\begin{lemma}\label{lem3.6}
For $\varepsilon\in[0,1)$, let $(u_\varepsilon,  v_\varepsilon, w_\varepsilon)$ be the global classical solution of   \eqref{2.10} and for  $p>0$ and $k>0$, let
 \be\label{z-def}
 z_\varepsilon=(u_\varepsilon+1)^{-p}e^{-kw_\varepsilon}.
 \ee
Then, for every $\varphi\in C^\infty(\overline{\Omega}\times(0,\infty))$, one has
\be\label{3.23}
\begin{split}
&\int_{\Omega}z_{\varepsilon t}\varphi+\frac{4(p+1)}{p}\int_{\Omega}\left|\nabla z_\varepsilon^\frac{1}{2}+\frac{2k+ p(p+1)\frac{u_{\varepsilon} }{u_{\varepsilon}+1}}{2\sqrt{p(p+1)}}z_\varepsilon^\frac{1}{2} \nabla w_{\varepsilon} \right|^2\varphi\\
&= -\int_{\Omega}\frac{4k^2-p(p+1)^2\frac{u_\varepsilon^2}{(u_\varepsilon+1)^2}}{4(p+1)}
z_\varepsilon|\nabla w_\varepsilon|^2\varphi -2\int_{\Omega}z_\varepsilon^\frac{1}{2}\nabla z_\varepsilon^\frac{1}{2}\cdot\nabla\varphi\cr
&\ \   -p\int_{\Omega}\frac{u_\varepsilon z_\varepsilon}{u_\varepsilon+1} \nabla w_\varepsilon\cdot\nabla\varphi-p\int_{\Omega}\frac{u_\varepsilon z_\varepsilon}{u_\varepsilon+1} (1-u_\varepsilon^{\theta-1}- v_\varepsilon)\varphi\\
 & \ \ +k\int_{\Omega}w_{\varepsilon}z_\varepsilon\varphi-k\int_{\Omega}\frac{( u_{\varepsilon}+ v_{\varepsilon})z_\varepsilon}{1+\varepsilon( u_{\varepsilon}+ v_{\varepsilon})}\varphi.
\end{split} \ee
\end{lemma}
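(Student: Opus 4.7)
The plan is to specialize the evolution identity \eqref{2.16} (with test function $\psi=\varphi$) to the triple $(\phi,\xi,\Phi)$ chosen in \eqref{3.17}. By Lemma \ref{lem3.5} these functions satisfy $\Phi'=\sqrt{\phi''}$ together with the positivity and convexity hypotheses of \eqref{2.16}, so the identity applies to the smooth classical solution $(u_\varepsilon,v_\varepsilon,w_\varepsilon)$ of \eqref{2.10}. From there the six groups of terms in \eqref{2.16} will be rewritten using the explicit reductions furnished by Lemma \ref{lem3.5}, and the output should match \eqref{3.23} term by term.

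First I would observe $\phi(u_\varepsilon)\xi(w_\varepsilon)=(u_\varepsilon+1)^{-p}e^{-kw_\varepsilon}=z_\varepsilon$, so the left-hand side of \eqref{2.16} is already $\int_\Omega z_{\varepsilon t}\varphi$. Next, $\Phi(u_\varepsilon)\sqrt{\xi(w_\varepsilon)}=-2\sqrt{(p+1)/p}\,z_\varepsilon^{1/2}$, so $\nabla(\Phi(u_\varepsilon)\sqrt{\xi(w_\varepsilon)})=-2\sqrt{(p+1)/p}\,\nabla z_\varepsilon^{1/2}$. Combining this with the first formula of Lemma \ref{lem3.5}, the integrand inside the modulus on the right of \eqref{2.16} becomes a common multiple of $\nabla z_\varepsilon^{1/2}$ plus a multiple of $z_\varepsilon^{1/2}\nabla w_\varepsilon$. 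Factoring the common constant $-2\sqrt{(p+1)/p}$ outside the modulus produces the squared expression on the left of \eqref{3.23} together with its prefactor $4(p+1)/p$.

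The $|\nabla w_\varepsilon|^2$ line of \eqref{2.16} is handled directly by the second formula of Lemma \ref{lem3.5}, producing the integrand $\frac{4k^2-p(p+1)^2 u_\varepsilon^2/(u_\varepsilon+1)^2}{4(p+1)}\,z_\varepsilon$. For the two lines contracted against $\nabla\varphi$, I would invoke the third formula of Lemma \ref{lem3.5} (which rewrites $\frac{\phi'}{\sqrt{\phi''}}\sqrt{\xi}$ as a constant multiple of $z_\varepsilon^{1/2}$) and the fourth (which collapses the bracketed combination to $-p u_\varepsilon(u_\varepsilon+1)^{-p-1}e^{-kw_\varepsilon}=-p\frac{u_\varepsilon z_\varepsilon}{u_\varepsilon+1}$), producing respectively the $-2\int_\Omega z_\varepsilon^{1/2}\nabla z_\varepsilon^{1/2}\cdot\nabla\varphi$ and $-p\int_\Omega \frac{u_\varepsilon z_\varepsilon}{u_\varepsilon+1}\nabla w_\varepsilon\cdot\nabla\varphi$ terms.

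Finally, the reaction line of \eqref{2.16} is handled via $u_\varepsilon\phi'(u_\varepsilon)\xi(w_\varepsilon)=-p\frac{u_\varepsilon z_\varepsilon}{u_\varepsilon+1}$ and $\phi(u_\varepsilon)\xi'(w_\varepsilon)=-kz_\varepsilon$; pairing these with the logistic factor $u_\varepsilon(1-u_\varepsilon^{\theta-1}-v_\varepsilon)$ and the production/degradation factor $\frac{u_\varepsilon+v_\varepsilon}{1+\varepsilon(u_\varepsilon+v_\varepsilon)}-w_\varepsilon$ gives the last two lines of \eqref{3.23}, with the $+k\int_\Omega w_\varepsilon z_\varepsilon\varphi$ contribution coming from the $-w_\varepsilon$ piece. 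No conceptual obstacle is expected: the whole argument is a bookkeeping specialization of an already-established general identity, entirely analogous to \cite[Lemma 6.2]{33} and \cite[Lemma 3.3]{6}. The only care required is consistent sign tracking when factoring the negative constant $-2\sqrt{(p+1)/p}$ out of the gradient expression, and cross-checking the resulting coefficient multiplying $z_\varepsilon^{1/2}\nabla w_\varepsilon$ inside the square.
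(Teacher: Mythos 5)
Your proposal is correct and takes the same route as the paper: specialize the evolution identity \eqref{2.16} (with $\psi=\varphi$) to the $(\phi,\xi,\Phi)$ of \eqref{3.17}, observe $z_\varepsilon=\phi(u_\varepsilon)\xi(w_\varepsilon)$, and rewrite each group of terms via Lemma \ref{lem3.5}; the paper's own proof is precisely this one-line reduction. One remark on the coefficient cross-check you rightly flag at the end: factoring $-2\sqrt{(p+1)/p}$ out of $\nabla(\Phi(u_\varepsilon)\sqrt{\xi(w_\varepsilon)})+\{\cdots\}\nabla w_\varepsilon$ actually yields the inner coefficient $\tfrac{2k+p(p+1)u_\varepsilon/(u_\varepsilon+1)}{4(p+1)}$ (because $2\sqrt{p(p+1)}\cdot 2\sqrt{(p+1)/p}=4(p+1)$), not the $\tfrac{2k+p(p+1)u_\varepsilon/(u_\varepsilon+1)}{2\sqrt{p(p+1)}}$ printed in \eqref{3.23}; this is a harmless discrepancy, as Lemmas \ref{lem3.7}--\ref{lem3.11} use only an upper bound for this coefficient, but your caution is well placed.
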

\begin{proof}
By \eqref{z-def} and \eqref{3.17}, it follows that $z_\varepsilon=\phi(u_\varepsilon)\xi(w_\varepsilon)$. Then the  identity \eqref{3.23}  follows from the computations provided in \eqref{2.16}   and Lemma \ref{lem3.5}.
\end{proof}

\begin{lemma}\label{lem3.7}
Suppose that  $p>0$ and $k>0$ satisfy
\be\label{3.28}
k>\frac{\sqrt{p}(p+1)}{2}.
\ee
Then for all $T>0$,  there exists $C_4=C_4(p,k)>0$ such that
\be\label{3.29}
\int^T_0\int_{\Omega}\left|\nabla\left\{(u_\varepsilon+1)^{-\frac{p}{2}}e^{-\frac{kw_\varepsilon}{2}}
\right\}\right|^2\leq C_4 (1+T), \ \ \forall \varepsilon\in(0,1)
\ee
and
\be\label{3.30}
\int^T_0\int_{\Omega}(u_\varepsilon+1)^{-p}e^{-kw_\varepsilon}|\nabla w_\varepsilon|^2\leq C_4(1+T),  \ \ \forall \varepsilon\in(0,1).
\ee
\end{lemma}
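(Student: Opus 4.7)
The plan is to test the evolution identity \eqref{3.23} of Lemma \ref{lem3.6} against the constant function $\varphi\equiv 1$. All terms carrying $\nabla\varphi$ then vanish, and what remains is the pure differential identity
$$
\frac{d}{dt}\int_\Omega z_\varepsilon +\frac{4(p+1)}{p}\int_\Omega\Bigl|\nabla z_\varepsilon^{1/2}+A_\varepsilon z_\varepsilon^{1/2}\nabla w_\varepsilon\Bigr|^2 +\int_\Omega\frac{4k^2-p(p+1)^2\frac{u_\varepsilon^2}{(u_\varepsilon+1)^2}}{4(p+1)}\,z_\varepsilon|\nabla w_\varepsilon|^2 =\mathcal{R}_\varepsilon,
$$
where $A_\varepsilon:=\bigl(2k+p(p+1)u_\varepsilon/(u_\varepsilon+1)\bigr)/(2\sqrt{p(p+1)})$ and $\mathcal{R}_\varepsilon$ collects only the three zero-order terms from \eqref{3.23}. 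The decisive observation is that condition \eqref{3.28} is exactly the inequality $4k^2-p(p+1)^2>0$, so the coefficient in the last left-hand term admits the uniform pointwise lower bound $[4k^2-p(p+1)^2]/[4(p+1)]>0$. Hence both quadratic terms on the left are non-negative, and the third one even dominates a positive multiple of $\int_\Omega z_\varepsilon|\nabla w_\varepsilon|^2$.

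I would then dispose of $\mathcal{R}_\varepsilon$ by very crude bounds, exploiting the two pointwise inequalities $z_\varepsilon\le 1$ and $u_\varepsilon/(u_\varepsilon+1)\le 1$. Expanding the logistic contribution, the part $-p\int_\Omega u_\varepsilon z_\varepsilon/(u_\varepsilon+1)$ is non-positive and is simply dropped, while the remaining parts are majorised by $p\int_\Omega u_\varepsilon^{\theta-1}+p\int_\Omega v_\varepsilon$, whose time-integral over $(0,T)$ is of order $1+T$ after combining Young's inequality $u_\varepsilon^{\theta-1}\le 1+u_\varepsilon^\theta$ with the $\varepsilon$-independent bounds \eqref{2.12}--\eqref{2.15}. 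The remaining positive term $k\int_\Omega w_\varepsilon z_\varepsilon\le k\int_\Omega w_\varepsilon$ is controlled by the $L^1$-bound on $w_\varepsilon$ furnished by \eqref{w-bdd}, whose exponent~$1$ always lies in the admissible range \eqref{w-epsi-bdd} under the standing hypothesis $\theta>\frac{2N-2}{N}$; the last absorption term in \eqref{3.23} is non-positive and is simply dropped.

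Integrating the resulting differential inequality from $0$ to $T$ and using the trivial bounds $0\le\int_\Omega z_\varepsilon(\cdot,T)$ and $\int_\Omega z_{0\varepsilon}\le|\Omega|$, I obtain
$$
\int_0^T\int_\Omega\Bigl|\nabla z_\varepsilon^{1/2}+A_\varepsilon z_\varepsilon^{1/2}\nabla w_\varepsilon\Bigr|^2 +\int_0^T\int_\Omega z_\varepsilon|\nabla w_\varepsilon|^2 \le C(p,k)(1+T),
$$
which already gives \eqref{3.30}. To pass to \eqref{3.29} I would apply the elementary bound $|a|^2\le 2|a+b|^2+2|b|^2$ with $a=\nabla z_\varepsilon^{1/2}$ and $b=A_\varepsilon z_\varepsilon^{1/2}\nabla w_\varepsilon$; since $A_\varepsilon$ is uniformly bounded from above by $(2k+p(p+1))/(2\sqrt{p(p+1)})$, the two summands just obtained convert directly into the desired control of $\int_0^T\int_\Omega|\nabla z_\varepsilon^{1/2}|^2=\int_0^T\int_\Omega\bigl|\nabla\{(u_\varepsilon+1)^{-p/2}e^{-kw_\varepsilon/2}\}\bigr|^2$.

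I do not anticipate a serious obstacle here: the sign structure in \eqref{3.23} was built exactly so that condition \eqref{3.28} closes the argument, and the remaining estimates are routine bookkeeping against the $\varepsilon$-independent bounds already established in Lemmas \ref{lem2.1} and \ref{lem3.1}. The only subtlety worth flagging is the need to keep the final constant $\varepsilon$-uniform, but this is automatic from the pointwise envelope $z_\varepsilon\le 1$ and the $\varepsilon$-free upper bound on $A_\varepsilon$.
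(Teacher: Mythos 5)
Your proposal is correct and follows essentially the same route as the paper: test \eqref{3.23} with $\varphi\equiv 1$, invoke \eqref{3.28} to obtain the uniform positive lower bound on the coefficient of $z_\varepsilon|\nabla w_\varepsilon|^2$, drop the sign-definite remainders, bound the rest by Lemmas \ref{lem2.1} and \ref{lem3.1}, and decouple \eqref{3.29} with an elementary triangle inequality. The only deviations are cosmetic: the paper controls $k\int_\Omega w_\varepsilon z_\varepsilon$ via the pointwise bound $ks e^{-ks}\leq e^{-1}$ (avoiding any need for \eqref{w-bdd}), and extracts \eqref{3.29} through $|a+b|^2\geq\tfrac12 a^2-b^2$ with a small min-of-constants absorption, whereas your $|a|^2\leq 2|a+b|^2+2|b|^2$ after first securing both nonnegative summands from the integrated inequality is if anything a little cleaner.
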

\begin{proof}
First, in view of  \eqref{3.28}, it is easy to see that
$$
\frac{4k^2- p(p+1)^2\frac{u_\varepsilon^2}{(u_\varepsilon+1)^2}}{4(p+1)}\geq  \frac{4k^2- p(p+1)^2}{4(p+1)}:=c_1>0 \ \ \text{on }  \Omega\times(0,\infty).
$$
For  $z_\varepsilon$   given by   \eqref{z-def} and for $T>0$, taking  $\varphi\equiv1$ in \eqref{3.23} and integrating  in $t$ from $0$ to  $T$, we get
\be\label{3.31}
\begin{split}
&\int_{\Omega}z_\varepsilon(\cdot,T)+c_1\int^T_0\int_{\Omega}z_\varepsilon|\nabla w_\varepsilon|^2\cr
&\ \ \ \ +\frac{4(p+1)}{p}\int^T_0\int_{\Omega}\left|\nabla z_\varepsilon^\frac{1}{2}+\frac{2k+ p(p+1)\frac{u_\varepsilon}{u_\varepsilon+1}}{2\sqrt{p(p+1)}}
z_\varepsilon^\frac{1}{2}\nabla w_\varepsilon\right|^2\cr
&\leq \int_{\Omega}(u_{0\varepsilon}+1)^{-p}e^{-kw_{0\varepsilon}}-p\int_0^T\int_{\Omega}\frac{u_\varepsilon z_\varepsilon}{u_\varepsilon+1} (1-u_\varepsilon^{\theta-1}- v_\varepsilon)\\
 & \ \ +k\int_0^T\int_{\Omega}w_{\varepsilon}(u_\varepsilon+1)^{-p}e^{-kw_\varepsilon}-k\int_0^T\int_{\Omega}\frac{( u_{\varepsilon}+ v_{\varepsilon})z_\varepsilon}{1+\varepsilon( u_{\varepsilon}+ v_{\varepsilon})}\\
&\leq \left(1+\frac{T}{e}\right) |\Omega|+p\int_0^T\int_{\Omega}u_\varepsilon\left|1-u_\varepsilon^{\theta-1}- v_\varepsilon\right|\\
&\leq c_2(1+T),
\end{split} \ee
where we have used \eqref{3.1} and the facts $z_\varepsilon\leq 1$ and  $kse^{-ks}\leq e^{-1}$ for $s>0$.

We notice from the fact  $|a+b|^2\geq \frac{1}{2}a^2 -b^2$ for all $a,b \in \mathbb{R}$  that
\begin{equation*}
\begin{split}
&\left|\nabla z_\varepsilon^\frac{1}{2}+\frac{2k+ p(p+1)\frac{u_\varepsilon} {u_\varepsilon+1}}{2\sqrt{p(p+1)}}z_\varepsilon^\frac{1}{2}\nabla w_\varepsilon\right|^2\cr
&\geq\frac{1}{2}|\nabla z_\varepsilon^\frac{1}{2}|^2-\left(\frac{2k+ p(p+1)}{2\sqrt{p(p+1)}}\right)^2z_\varepsilon|\nabla w_\varepsilon|^2:=\frac{1}{2}|\nabla z_\varepsilon^\frac{1}{2}|^2-c_3 z_\varepsilon|\nabla w_\varepsilon|^2
\end{split}
\end{equation*}
Setting  $c_4:=\min\{\frac{4(p+1)}{p},\frac{c_1}{2c_3}\}$, we deduce, for all $T>0$ and $\varepsilon\in(0,1)$,  that
\be\label{3.32}
\begin{split}
&c_1\int^T_0\int_{\Omega}z_\varepsilon|\nabla w_\varepsilon|^2+\frac{4(p+1)}{p}\int^T_0\int_{\Omega}\left|\nabla z_\varepsilon^\frac{1}{2}+\frac{2k+ p(p+1)\frac{u_\varepsilon}{u_\varepsilon+1}}{2\sqrt{p(p+1)}}
z_\varepsilon^\frac{1}{2}\nabla w_\varepsilon\right|^2\\
&\geq c_1\int^T_0\int_{\Omega}z_\varepsilon|\nabla w_\varepsilon|^2+c_4\int^T_0\int_{\Omega}\left|\nabla z_\varepsilon^\frac{1}{2}+\frac{2k+ p(p+1)\frac{u_\varepsilon}{u_\varepsilon+1}}{2\sqrt{p(p+1)}}
z_\varepsilon^\frac{1}{2}\nabla w_\varepsilon\right|^2\\
&\geq \frac{c_4}{2}\int^T_0\int_{\Omega} |\nabla  z_\varepsilon^\frac{1}{2}|^2+\left(c_1-c_3c_4\right)\int^T_0\int_{\Omega}z_\varepsilon|\nabla w_\varepsilon|^2\\
&\geq \frac{c_4}{2}\int^T_0\int_{\Omega} |\nabla  z_\varepsilon^\frac{1}{2}|^2+\frac{c_1}{2}\int^T_0\int_{\Omega}z_\varepsilon|\nabla w_\varepsilon|^2,
\end{split} \ee
Inserting  \eqref{3.32} into \eqref{3.31} and recalling \eqref{z-def}, we  achieve   \eqref{3.29} and \eqref{3.30}.
\end{proof}

 Based on the testing identity \eqref{3.23} of Lemma \ref{lem3.6} and the estimates in Lemma \ref{lem3.7}, we obtain certain space-time regularity for $z_\varepsilon$ and others.
\begin{lemma}\label{lem3.8}
Let $p>0$ and $k>\frac{\sqrt{p}(p+1)}{2}$, and let $m\in\mathbb{N}$ be such that $m>\frac{N}{2}$. Then for all $T>0$ there exists $C_5=C_5(p,k,m)>0$ such that
\be\label{3.33}
\int^T_0\left\|\partial_t\left\{(u_\varepsilon(\cdot,t)+1)^{-p}
e^{-kw_\varepsilon(\cdot,t)}\right\}\right\|_{(W^{m,2}(\Omega)))^\ast}dt\leq C_5(1+T), \ \ \forall \varepsilon\in(0,1)
\ee
as well as
\be\label{3.34}
\int^T_0\left\|\partial_t\ln\left(v_{\varepsilon}(\cdot,t)+1\right)\right\|_{(W^{m,2}(\Omega))^{\ast}}dt\leq C_5(1+T), \ \ \forall \varepsilon\in(0,1)
\ee
and
\be\label{3.35}
\int^T_0\left\|w_{\varepsilon t}(\cdot,t)\right\|_{(W^{m,2}(\Omega))^{\ast}}dt\leq  C_5(1+T), \ \ \forall \varepsilon\in(0,1).
\ee
\end{lemma}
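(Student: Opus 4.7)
The plan is to establish each of the three estimates by testing the relevant evolution equation against an arbitrary $\varphi\in W^{m,2}(\Omega)$ and exploiting the Sobolev embedding $W^{m,2}(\Omega)\hookrightarrow L^\infty(\Omega)\cap W^{1,2}(\Omega)$ (valid since $m>\tfrac{N}{2}\geq 1$) to control both $\varphi$ and $\nabla\varphi$. In each case one arrives at a pointwise-in-$t$ inequality of the shape
$$
\bigl|\langle\partial_t\Psi_\varepsilon(\cdot,t),\varphi\rangle\bigr|\leq F_\varepsilon(t)\bigl(\|\varphi\|_{L^\infty(\Omega)}+\|\nabla\varphi\|_{L^2(\Omega)}\bigr)
$$
for $\Psi_\varepsilon\in\{w_\varepsilon,\ \ln(v_\varepsilon+1),\ (u_\varepsilon+1)^{-p}e^{-kw_\varepsilon}\}$, and the task reduces to verifying $\int_0^T F_\varepsilon(t)\,dt\leq C(1+T)$ uniformly in $\varepsilon$; this will follow from the a priori bounds in Lemmas \ref{lem2.1}, \ref{lem3.1}, \ref{lem3.4} and \ref{lem3.7}, combined with one application of Cauchy--Schwarz in $t$ whenever $F_\varepsilon$ is only known to be $L^2((0,T))$-integrable.

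For \eqref{3.35}, testing the third equation in \eqref{2.10} against $\varphi$ and integrating by parts under the Neumann boundary condition yields
$$
\Bigl|\int_\Omega w_{\varepsilon t}(\cdot,t)\varphi\Bigr|\leq\|\nabla w_\varepsilon\|_{L^2}\|\nabla\varphi\|_{L^2}+\bigl(\|w_\varepsilon\|_{L^1}+\|u_\varepsilon\|_{L^1}+\|v_\varepsilon\|_{L^1}\bigr)\|\varphi\|_{L^\infty},
$$
and integration in $t$ together with \eqref{gradw-est}, the uniform mass bounds \eqref{2.12}--\eqref{2.13}, and \eqref{w-bdd} at $p=1$ gives \eqref{3.35}. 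For \eqref{3.34}, applying the chain rule to the second equation of \eqref{2.10} and integrating by parts twice produces
\begin{align*}
\int_\Omega\partial_t\ln(v_\varepsilon+1)\varphi
&=-\int_\Omega\nabla\ln(v_\varepsilon+1)\cdot\nabla\varphi+\int_\Omega|\nabla\ln(v_\varepsilon+1)|^2\varphi\\
&\quad+\int_\Omega\frac{v_\varepsilon}{v_\varepsilon+1}\nabla w_\varepsilon\cdot\nabla\varphi-\int_\Omega\frac{v_\varepsilon\varphi}{v_\varepsilon+1}\nabla w_\varepsilon\cdot\nabla\ln(v_\varepsilon+1)\\
&\quad+\int_\Omega\frac{v_\varepsilon(1-v_\varepsilon-u_\varepsilon)}{v_\varepsilon+1}\varphi,
\end{align*}
and each summand is majorized by $\|\varphi\|_{L^\infty}+\|\nabla\varphi\|_{L^2}$ times an $L^1_t$-integrable function of $t$ via \eqref{3.12}, \eqref{gradw-est} and \eqref{3.1}, after one Cauchy--Schwarz in $x$ on the cross term and one Cauchy--Schwarz in $t$ on the single-gradient terms.

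Finally, \eqref{3.33} is obtained directly from the testing identity \eqref{3.23} of Lemma \ref{lem3.6}. Since $z_\varepsilon\leq 1$ and $\tfrac{u_\varepsilon}{u_\varepsilon+1}\leq 1$ pointwise, every one of the six right-hand contributions admits a bound of the required type: the quadratic dissipative term $\int_\Omega z_\varepsilon|\nabla w_\varepsilon|^2\varphi$ is controlled via \eqref{3.30} against $\|\varphi\|_{L^\infty}$; the two $\nabla\varphi$-terms $\int_\Omega z_\varepsilon^{1/2}\nabla z_\varepsilon^{1/2}\cdot\nabla\varphi$ and $\int_\Omega\tfrac{u_\varepsilon z_\varepsilon}{u_\varepsilon+1}\nabla w_\varepsilon\cdot\nabla\varphi$ are handled by Cauchy--Schwarz in $x$, using $z_\varepsilon^{1/2}\leq 1$ together with \eqref{3.29} and \eqref{3.30}; the reaction contribution is absorbed by \eqref{3.1}; and the remaining two terms are dominated by $(\|w_\varepsilon\|_{L^1}+\|u_\varepsilon\|_{L^1}+\|v_\varepsilon\|_{L^1})\|\varphi\|_{L^\infty}$, uniformly bounded by Lemma \ref{lem2.1}. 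The only genuine technical point across all three estimates is that several controlling quantities are a priori only $L^2$-in-time rather than $L^1$-in-time; a single Cauchy--Schwarz in $t$ bridges this gap and is precisely responsible for the linear-in-$T$ growth $C(1+T)$ claimed in \eqref{3.33}--\eqref{3.35}.
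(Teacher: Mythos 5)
Your proposal is correct and follows essentially the same route as the paper: test each evolution equation against $\varphi$, invoke the Sobolev embedding $W^{m,2}(\Omega)\hookrightarrow L^\infty(\Omega)$, control the resulting terms by the a priori bounds of Lemmas \ref{lem2.1}, \ref{lem3.1}, \ref{lem3.4} and \ref{lem3.7}, and integrate in time. The one cosmetic difference is in how the $L^2$-in-space norms that are only known to be $L^2$-in-time (e.g.\ $\|\nabla w_\varepsilon\|_{L^2}$, $\|\nabla z_\varepsilon^{1/2}\|_{L^2}$, $\|\nabla\ln(v_\varepsilon+1)\|_{L^2}$) are promoted to $L^1$-in-time: you apply Cauchy--Schwarz in $t$, while the paper applies Young's inequality pointwise in $t$ (of the form $a\le\tfrac12+\tfrac12 a^2$) so that everything is already squared before the time integration; the two devices are interchangeable and yield the same linear-in-$T$ bound. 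One small bookkeeping slip: in your treatment of \eqref{3.33} you speak of ``six right-hand contributions'' and single out only the quadratic term $\int_\Omega z_\varepsilon|\nabla w_\varepsilon|^2\varphi$, but the identity \eqref{3.23} also carries the full dissipative term $\tfrac{4(p+1)}{p}\int_\Omega|\nabla z_\varepsilon^{1/2}+\cdots z_\varepsilon^{1/2}\nabla w_\varepsilon|^2\varphi$ on its left-hand side, which must be moved over and estimated; expanding the square and using the uniform bound on the coefficient $\tfrac{2k+p(p+1)u_\varepsilon/(u_\varepsilon+1)}{2\sqrt{p(p+1)}}$ reduces it precisely to $\int_\Omega|\nabla z_\varepsilon^{1/2}|^2\varphi$ and $\int_\Omega z_\varepsilon|\nabla w_\varepsilon|^2\varphi$, controlled by \eqref{3.29} and \eqref{3.30}, so the ingredients you cite suffice --- but this term should be acknowledged explicitly.
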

\begin{proof}
For $T>0$ and for $z_\varepsilon$   given by  \eqref{z-def}, it follows from Lemmas \ref{lem2.1}, \ref{lem3.1} and  \ref{lem3.7}  that, for each $ \varepsilon\in(0,1)$,
\be\label{3.36}
\int^T_0\int_\Omega \left(|\nabla z_\varepsilon^\frac{1}{2}|^2+z_\varepsilon|\nabla w_\varepsilon|^2+
+ u_{\varepsilon}+ v_{\varepsilon}+u_\varepsilon \left|1-u_\varepsilon^{\theta-1}- v_\varepsilon\right|\right)\leq c_1(1+T).
\ee
Taking $\varphi\in C^\infty(\overline{\Omega})$ and applying Lemma \ref{lem3.6}, we see that
\be\label{3.37}
\begin{split}
&\left|\int_{\Omega}z_{\varepsilon t}\varphi\right|\\
&\leq  \frac{8(p+1)}{p}\int_{\Omega}\left|\nabla z_\varepsilon^\frac{1}{2}\right|^2|\varphi|+\frac{[2k+ p(p+1)]^2}{2p^2}\int_{\Omega}z_\varepsilon |\nabla w_\varepsilon |^2|\varphi|\\
&\  +\frac{4k^2+p(p+1)^2}{4(p+1)}\int_{\Omega}
z_\varepsilon|\nabla w_\varepsilon|^2|\varphi|+2\int_{\Omega}\left|z_\varepsilon^\frac{1}{2}\nabla z_\varepsilon^\frac{1}{2}\cdot\nabla\varphi\right|\\
&\ \   +p\int_{\Omega}\frac{u_\varepsilon z_\varepsilon}{u_\varepsilon+1}\left| \nabla w_\varepsilon\cdot\nabla\varphi\right|+p\int_{\Omega}\frac{u_\varepsilon z_\varepsilon}{u_\varepsilon+1} \left|(1-u_\varepsilon^{\theta-1}- v_\varepsilon)\varphi\right|\\\
 & \ \ +k\int_{\Omega}w_{\varepsilon}(u_\varepsilon+1)^{-p}e^{-kw_\varepsilon}|\varphi|+k\int_{\Omega}\frac{( u_{\varepsilon}+ v_{\varepsilon})z_\varepsilon}{1+\varepsilon( u_{\varepsilon}+ v_{\varepsilon})}|\varphi|\\
 &\leq \frac{8(p+1)}{p}\|\varphi\|_{L^\infty}\int_{\Omega}\left|\nabla z_\varepsilon^\frac{1}{2}\right|^2+\left(2\|\nabla z_\varepsilon^\frac{1}{2}\|_{L^2}+p\|z_\varepsilon^\frac{1}{2}\nabla w_\varepsilon\|_{L^2}\right)\|\nabla \varphi\|_{L^2}\\
 & +\left(\frac{[2k+ p(p+1)]^2}{2p^2}+\frac{4k^2+p(p+1)^2}{4(p+1)}\right)\|\varphi\|_{L^\infty}\int_{\Omega}
z_\varepsilon|\nabla w_\varepsilon|^2\\
&+\left(\frac{|\Omega|}{e}+p\int_{\Omega}u_\varepsilon \left|1-u_\varepsilon^{\theta-1}- v_\varepsilon\right|+k\int_\Omega ( u_{\varepsilon}+ v_{\varepsilon})\right)\|\varphi\|_{L^\infty},
\end{split} \ee
where we used the facts   $z_\varepsilon\leq 1$ and  $kse^{-ks}\leq e^{-1}$ for $s>0$.

Notice from  $m>\frac{N}{2}$ that  $W^{m,2}(\Omega)$ is continuously embedded into  $L^\infty(\Omega)$, which allows us to  find $c_2=c_2(m)>0$ such that $\|f\|_{L^\infty(\Omega)}\leq c_2\|f\|_{W^{m,2}(\Omega)}$ for all $f\in C^\infty(\overline{\Omega})$. Hence, by H\"{o}lder  inequality, we derive from \eqref{3.37} that
\be\label{3.38}
\begin{split}
&\left\|z_{\varepsilon t}\right\|_{(W^{m,2}(\Omega))^\ast}\\
&\leq \frac{8(p+1)}{p}c_2\int_{\Omega}\left|\nabla z_\varepsilon^\frac{1}{2}\right|^2+\left(2\|\nabla z_\varepsilon^\frac{1}{2}\|_{L^2}+p\|z_\varepsilon^\frac{1}{2}\nabla w_\varepsilon\|_{L^2}\right)\\
 & +\left(\frac{[2k+ p(p+1)]^2}{2p^2}+\frac{4k^2+p(p+1)^2}{4(p+1)}\right)c_2\int_{\Omega}
z_\varepsilon|\nabla w_\varepsilon|^2\\
&+\left(\frac{|\Omega|}{e}+p\int_{\Omega}u_\varepsilon \left|1-u_\varepsilon^{\theta-1}- v_\varepsilon\right|+k\int_\Omega ( u_{\varepsilon}+ v_{\varepsilon})\right)c_2\\
&\leq \left[1+\frac{8(p+1)}{p}c_2\right]\int_{\Omega}\left|\nabla z_\varepsilon^\frac{1}{2}\right|^2+\frac{p}{2}\int_\Omega z_\varepsilon|\nabla w_\varepsilon|^2\\
 & +\left(\frac{[2k+ p(p+1)]^2}{2p^2}+\frac{4k^2+p(p+1)^2}{4(p+1)}\right)c_2\int_{\Omega}
z_\varepsilon|\nabla w_\varepsilon|^2\\
&+1+\frac{p}{2}+\left(\frac{|\Omega|}{e}+p\int_{\Omega}u_\varepsilon \left|1-u_\varepsilon^{\theta-1}- v_\varepsilon\right|+k\int_\Omega ( u_{\varepsilon}+ v_{\varepsilon})\right)c_2.
\end{split} \ee
Integrating \eqref{3.38} from $0$ to $T$ and applying  \eqref{3.36} and \eqref{z-def}, we end up \eqref{3.33}.

To derive \eqref{3.34}, we multiply  the second equation in \eqref{2.10} by $\frac{\varphi}{v_{\varepsilon}(\cdot,t)+1}$, integrate by parts and use Young's inequality to deduce, for all $t>0$ and $\varepsilon\in(0,1)$, that
\begin{eqnarray*}
&&\int_{\Omega}\partial_t\ln(v_{\varepsilon}(\cdot,t)+1)\varphi\\
&&=\int_{\Omega}\frac{\varphi}{v_{\varepsilon}+1}[\Delta v_{\varepsilon}- \nabla\cdot(v_{\varepsilon}\nabla w_{\varepsilon})+ v_{\varepsilon}(1-v_{\varepsilon}- u_{\varepsilon})]\\
&&=\int_{\Omega}\frac{|\nabla v_{\varepsilon}|^2}{(v_{\varepsilon}+1)^2}\varphi-\int_{\Omega}\frac{\nabla v_{\varepsilon}}{v_{\varepsilon}+1}\cdot\nabla\varphi-\int_{\Omega}\frac{v_{\varepsilon}\varphi} {v_{\varepsilon}+1}\nabla w_{\varepsilon}\cdot\frac{\nabla v_{\varepsilon}}{v_{\varepsilon}+1}\cr
&&\ \ \ \ +\int_{\Omega}\frac{v_{\varepsilon}}{v_{\varepsilon}+1}\nabla w_{\varepsilon}\cdot\nabla\varphi+\int_{\Omega}\frac{v_{\varepsilon}\varphi }{1+v_{\varepsilon}}\left(1-v_{\varepsilon}-u_{\varepsilon}\right)\cr
&&\leq\|\varphi\|_{L^{\infty}}\int_{\Omega}\frac{|\nabla v_{\varepsilon}|^2}{(v_{\varepsilon}+1)^2}+\left(\int_{\Omega}\frac{|\nabla v_{\varepsilon}|^2}{(v_{\varepsilon}+1)^2}+ \frac{1}{4}\right)\|\nabla\varphi\|_{L^2}\cr
&&\ \ \ \ +\left(\frac{1}{4}\int_{\Omega}\frac{|\nabla v_{\varepsilon}|^2}{(v_{\varepsilon}+1)^2}+\int_{\Omega}|\nabla w_{\varepsilon}|^2\right)
\|\varphi\|_{L^{\infty}}\cr
&&\ \ \ \ +\left(\int_{\Omega}|\nabla w_{\varepsilon}|^2+\frac{1}{4}\right)\|\nabla\varphi\|_{L^2}+\|\varphi\|_{L^\infty}\int_{\Omega}
v_{\varepsilon}\left|1-v_{\varepsilon}-u_{\varepsilon}\right|.
\end{eqnarray*}
 Thus,  since $m>\frac{N}{2}$,  by the continuous embedding  $W^{m,2}(\Omega)\hookrightarrow L^{\infty}(\Omega)$,  \eqref{3.1}, \eqref{3.12} and \eqref{gradw-est}, for $T>0$, we conclude that
\begin{equation*}
\begin{split}
&\int_0^T\|\partial_t\ln(v_{\varepsilon t}(\cdot,t)+1)\|_{(W^{m,2}(\Omega))^{\ast}}\\
&\leq c_3(T+\int_0^T\int_{\Omega}\left(\frac{|\nabla v_{\varepsilon}|^2}{(v_{\varepsilon}+1)^2}+ |\nabla w_{\varepsilon}|^2+v_{\varepsilon}\left|1-v_{\varepsilon}-u_{\varepsilon}\right|\right)\\
&\leq c_4(1+T), \ \ \forall \varepsilon\in(0,1),
\end{split}
\end{equation*}
yielding precisely \eqref{3.34}.

Finally, multiplying the third equation in \eqref{2.10} by $\varphi$ and using Young's inequality, we have that
\begin{eqnarray*}
\int_{\Omega}w_{\varepsilon t}(\cdot,t)\varphi
&=&-\int_{\Omega}\nabla w_{\varepsilon}\cdot\nabla\varphi-\int_{\Omega}w_{\varepsilon}\varphi+\int_{\Omega}\frac{( u_{\varepsilon}+ v_{\varepsilon})}{1+\varepsilon( u_{\varepsilon}+ v_{\varepsilon})}\varphi\\
&\leq&\left(\int_{\Omega}|\nabla w_{\varepsilon}|^2+\frac{1}{4}\right)\|\nabla\varphi\|_{L^2}+\int_\Omega \left(w_{\varepsilon} + u_{\varepsilon}+v_{\varepsilon}\right)\|\varphi\|_{L^{\infty}}.
\end{eqnarray*}
By the continuous  embedding  $W^{m,2}(\Omega)\hookrightarrow L^{\infty}(\Omega)$,  Lemma \ref{lem2.1} and \eqref{gradw-est}, we see, for $T>0$ and each $\varepsilon\in(0,1)$, that
\begin{equation*}
\begin{split}
\int_0^T\|w_{\varepsilon t}(\cdot,t)\|_{(W^{m,2}(\Omega))^{\ast}}&\leq c_5\left(T+\int_0^T\int_{\Omega}\left( u_{\varepsilon}+v_{\varepsilon}+w_{\varepsilon} +|\nabla w_{\varepsilon}|^2\right)\right)\\
&\ \leq c_6(1+T),
\end{split}
\end{equation*}
and so \eqref{3.35} follows.
\end{proof}

Now, we are preparing to extract a suitable sequence of number $\varepsilon$ along with the respective solutions approach a limit in appropriate topologies, which serves as a generalized solution to \eqref{1.1}.

\begin{lemma}\label{lem3.9}
Under the conditions of Theorem \ref{Thm1.1},  there exist $(\varepsilon_j)_{j\in\mathbb{N}}\subset(0,1)$ with $\varepsilon_j\searrow0$ as $j\rightarrow\infty$ and nonnegative functions
\be\label{3.390}
\left(u, \ v\right)\in \left(L^\infty((0,\infty);L^1(\Omega))\right)^2,  \ \
w\in \bigcap_{p\in[1,\frac{N+2}{N+1})}L^p_{loc}([0,\infty);W^{1,p}(\Omega))
\ee
such that, as $\varepsilon=\varepsilon_j\searrow0$,
\be\label{3.39}
u_{\varepsilon}\rightarrow u\ \ \textmd{in}\ L^1_{loc}(\overline{\Omega}\times[0,\infty))\ \ \textmd{and}\ \textmd{a.e.}\ \textmd{in}\ \Omega\times(0,\infty)
\ee
and
\be\label{3.40}
v_{\varepsilon}\rightarrow v\ \ \textmd{in}\ L^1_{loc}(\overline{\Omega}\times[0,\infty))\ \ \textmd{and}\ \textmd{a.e.}\ \textmd{in}\ \Omega\times(0,\infty),
\ee
\be\label{3.41}
\ln(v_{\varepsilon}+1)\rightharpoonup\ln(v+1)\ \ \textmd{in}\ L^2_{loc}([0,\infty);W^{1,2}(\Omega)) \textmd{and}\ \textmd{a.e.}\ \textmd{in}\ \Omega\times(0,\infty)
\ee
as well as, for $1\leq p<\frac{N+2}{N+1}$,
\be\label{3.42}
w_{\varepsilon}\rightarrow w\ \ \textmd{in}\ L^p_{loc}(\overline{\Omega}\times[0,\infty))\ \ \textmd{and}\ \textmd{a.e.}\ \textmd{in}\ \Omega\times(0,\infty),
\ee
\be\label{3.43}
\nabla w_{\varepsilon}\rightarrow\nabla w\ \ \textmd{in}\ L^p_{loc}(\overline{\Omega}\times[0,\infty))\ \ \textmd{and}\ \textmd{a.e.}\ \textmd{in}\ \Omega\times(0,\infty)
\ee
\be\label{3.44}
\frac{(u_{\varepsilon} +  v_{\varepsilon})}{1+\varepsilon(  u_{\varepsilon}+  v_{\varepsilon}) }\rightarrow  u+  v\ \ \text{in }\ L^1_{loc}(\overline{\Omega}\times[0,\infty)).
\ee
 Furthermore, the identities \eqref{2.1}, \eqref{2.2}, \eqref{2.3} and inequality \eqref{2.4} hold for all $\psi\in C^{\infty}_0(\overline{\Omega}\times[0, \infty))$ and, with $m_1, m_2$ defined by \eqref{2.12} and \eqref{2.13}, it follows
$$
\int_{\Omega}\left(u(\cdot,t)+v(\cdot,t)\right)\leq m_1+m_2\ \ \textmd{for}\ a.e.\ t>0.
$$
\end{lemma}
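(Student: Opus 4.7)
The overall strategy is a compactness-and-extraction argument: first extract a diagonal subsequence along which $w_\varepsilon$, $v_\varepsilon$, and $u_\varepsilon$ converge in the strong and almost-everywhere senses required by \eqref{3.39}--\eqref{3.44}, and then pass to the limit in the approximating identities to obtain \eqref{2.2}, \eqref{2.3} and \eqref{2.4}. I would organize it as follows.

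\textbf{Step 1 (extraction for $w_\varepsilon$ and $v_\varepsilon$).} Lemma \ref{lem3.3} directly gives, after passing to a subsequence and a standard diagonal argument in $T$ and $p$, the strong convergence \eqref{3.42} and \eqref{3.43} for all $p \in [1, \frac{N+2}{N+1})$, together with the pointwise a.e. convergences of $w_\varepsilon$ and $\nabla w_\varepsilon$. For $v_\varepsilon$, the pair of estimates \eqref{3.12} (bounding $\nabla \ln(v_\varepsilon+1)$ in $L^2_{loc}$) and \eqref{3.34} (bounding $\partial_t \ln(v_\varepsilon+1)$ in $L^1_{loc}((W^{m,2})^\ast)$) combined with the Aubin--Lions lemma produces strong convergence of $\ln(v_\varepsilon+1)$ in $L^2_{loc}(\overline{\Omega}\times[0,\infty))$, hence a.e. convergence after a further subsequence; Lemma \ref{lem3.2} (Vitali) then upgrades this to \eqref{3.40}, while \eqref{3.41} follows from weak compactness in $L^2_{loc}([0,\infty);W^{1,2}(\Omega))$ furnished by \eqref{3.12}.

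\textbf{Step 2 (a.e. convergence of $u_\varepsilon$).} This is the main technical point, since no compactness of $u_\varepsilon$ itself is available. The idea is to fix some $p>0$ and $k>\tfrac{\sqrt{p}(p+1)}{2}$ and to apply Aubin--Lions to $z_\varepsilon^{1/2} = (u_\varepsilon+1)^{-p/2}e^{-kw_\varepsilon/2}$: estimate \eqref{3.29} bounds $\nabla z_\varepsilon^{1/2}$ in $L^2_{loc}$ uniformly, \eqref{3.33} bounds $\partial_t z_\varepsilon$ in $L^1_{loc}((W^{m,2})^\ast)$, and $z_\varepsilon^{1/2}$ is trivially bounded in $L^\infty$. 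Hence $z_\varepsilon \to z$ strongly in $L^1_{loc}$ and a.e. along a further subsequence. Combined with the a.e. convergence $w_\varepsilon \to w$ already extracted, this yields $(u_\varepsilon+1)^{-p} = z_\varepsilon \, e^{k w_\varepsilon} \to z\, e^{kw}$ a.e., which determines $u_\varepsilon \to u := (z\,e^{kw})^{-1/p}-1 \in [0,\infty]$ a.e. Vitali combined with the uniform integrability from Lemma \ref{lem3.2} then gives \eqref{3.39}; the finiteness of $u$ a.e. and the $L^\infty((0,\infty);L^1)$-bound in \eqref{3.390} for both $u$ and $v$ follow from \eqref{2.12}--\eqref{2.13} and Fatou. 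The convergence \eqref{3.44} is then a consequence of the a.e. convergence of $u_\varepsilon+v_\varepsilon$ (with $\varepsilon\to 0$) together with the pointwise domination $\frac{u_\varepsilon+v_\varepsilon}{1+\varepsilon(u_\varepsilon+v_\varepsilon)}\le u_\varepsilon+v_\varepsilon$ and Vitali.

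\textbf{Step 3 (passing to the limit in the weak formulations).} Identity \eqref{2.3} is the most direct: for any $\psi \in C_0^\infty(\Omega\times[0,\infty))$, test the third equation of \eqref{2.10} against $\psi$ and pass to the limit, using \eqref{3.42}--\eqref{3.44} together with the convergence $w_{0\varepsilon}\to w_0$ in $L^r$ from \eqref{2.11}. Inequality \eqref{2.4} follows by integrating the first equation of \eqref{2.10} over $\Omega\times(0,t)$ and applying Fatou to the nonnegative quantity $\int_\Omega u_\varepsilon(\cdot,t)$ on the left while using \eqref{3.39} and dominated convergence (with the uniform $L^1$ bounds of \eqref{3.1}) on the right. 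For the inequality \eqref{2.2}, I would start from the identity obtained by testing the $v_\varepsilon$-equation against $\psi/(v_\varepsilon+1)$, i.e. the computation already carried out in the proofs of Lemmas \ref{lem3.4} and \ref{lem3.8}. The terms $\int \frac{v_\varepsilon}{v_\varepsilon+1}\nabla w_\varepsilon\cdot\nabla\psi$ and $\int \frac{v_\varepsilon}{v_\varepsilon+1}(1-v_\varepsilon-u_\varepsilon)\psi$ pass to the limit via strong $L^p$-convergence of $\nabla w_\varepsilon$, $L^1$-convergence of $u_\varepsilon,v_\varepsilon$, and uniform integrability of the reaction terms from Lemma \ref{lem3.1}; the gradient-square term $\int |\nabla \ln(v_\varepsilon+1)|^2\psi$ becomes an inequality (with $\ge$) because of weak lower semicontinuity of the $L^2$-norm against the nonnegative weight $\psi$ using \eqref{3.41}; and the cross term $\int \frac{v_\varepsilon}{v_\varepsilon+1}\psi\, \nabla w_\varepsilon\cdot\nabla\ln(v_\varepsilon+1)$ is handled by writing $\frac{v_\varepsilon}{v_\varepsilon+1}\nabla w_\varepsilon \to \frac{v}{v+1}\nabla w$ strongly in $L^2_{loc}$ (dominated convergence, using a.e. convergence together with the $L^2_{loc}$ bound on $\nabla w_\varepsilon$ guaranteed by \eqref{gradw-est}) and pairing with the weak $L^2_{loc}$ convergence of $\nabla \ln(v_\varepsilon+1)$.

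\textbf{Main obstacle.} The delicate step is Step 2: since we have no direct strong compactness of $u_\varepsilon$, everything rests on the testing identity of Lemma \ref{lem3.6} and the nonlinear change of variables $u_\varepsilon\mapsto z_\varepsilon$. A subtle secondary point is the mixed term in the $v$-equation when passing to the limit in \eqref{2.2}: one must pair a weak-$L^2$ convergence with a strong-$L^2$ convergence for $\frac{v_\varepsilon}{v_\varepsilon+1}\nabla w_\varepsilon$, which is where the bound \eqref{vgradw-bdd} becomes indispensable.
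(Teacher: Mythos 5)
Your plan follows the paper's proof essentially verbatim: Aubin--Lions compactness plus a diagonal argument for $w_\varepsilon$ from Lemma~\ref{lem3.3}; Aubin--Lions for $\ln(v_\varepsilon+1)$ from \eqref{3.12} and \eqref{3.34}, upgraded by Vitali to \eqref{3.40}; the nonlinear change of variables $z_\varepsilon=(u_\varepsilon+1)^{-p}e^{-kw_\varepsilon}$ with Aubin--Lions via Lemmas~\ref{lem3.7} and~\ref{lem3.8} to recover $u_\varepsilon\to u$ a.e.\ and then \eqref{3.39} via Vitali; and, for \eqref{2.2}--\eqref{2.4}, lower semicontinuity of the quadratic gradient term plus weak--strong pairings for the remaining terms. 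Two points deserve tightening.

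First, in Step~2 you speak of applying Aubin--Lions to $z_\varepsilon^{1/2}$ while quoting the time-derivative bound \eqref{3.33} for $\partial_t z_\varepsilon$, which mixes the two quantities. The paper's handling is cleaner: since $0<z_\varepsilon\leq1$, one has $\int|\nabla z_\varepsilon|^2=4\int z_\varepsilon|\nabla z_\varepsilon^{1/2}|^2\leq 4\int|\nabla z_\varepsilon^{1/2}|^2$, so \eqref{3.29} actually bounds $z_\varepsilon$ itself in $L^2_{loc}([0,\infty);W^{1,2}(\Omega))$, and Aubin--Lions is then applied directly to $z_\varepsilon$.

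Second, and more substantively, in Step~3 you assert that $\frac{v_\varepsilon}{v_\varepsilon+1}\nabla w_\varepsilon\to\frac{v}{v+1}\nabla w$ \emph{strongly} in $L^2_{loc}$ ``by dominated convergence, using a.e.\ convergence together with the $L^2_{loc}$ bound on $\nabla w_\varepsilon$ from \eqref{gradw-est}.'' A uniform $L^2$ bound does not furnish an $L^2$ dominating function; a.e.\ convergence plus an $L^2$ bound yields only weak $L^2$ convergence. What you genuinely have is strong convergence in $L^q_{loc}$ for every $q<2$, obtained by interpolating the strong $L^p$ convergence of \eqref{3.43} with the uniform $L^2$ bound of \eqref{vgradw-bdd}, which is not by itself enough to pair with the merely weak $L^2$ convergence of $\nabla\ln(v_\varepsilon+1)$ in \eqref{3.41}. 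The paper's own treatment of this cross term is similarly terse (it cites \eqref{vgradw-bdd}, \eqref{3.40}, \eqref{3.41}, \eqref{3.43} and a further subsequence), so the route is the same, but you should not dismiss the step as plain dominated convergence; it requires a more careful equi-integrability or Egorov-type argument than what you have written.
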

\begin{proof}
For convenience, we shall rewrite the definition of $z_\varepsilon$ via \eqref{z-def} here:
\be\label{z-def+}
 z_\varepsilon=(u_\varepsilon+1)^{-p}e^{-kw_\varepsilon},  \  \  \ p>0,\  \    k>\frac{\sqrt{p} (p+1)}{2}.
 \ee
Now, for  $m>\frac{N}{2}$,  Lemmas \ref{lem3.7} and  \ref{lem3.8} enables us to see, for all $T>0$,  that
\be\label{3.46}
\int^T_0\int_{\Omega}|\nabla z_\varepsilon^{\frac{1}{2}}|^2+\int^T_0\|z_{\varepsilon t}(\cdot,t)\|_{(W^{m,2}(\Omega))^\ast}dt\leq c_1(1+T), \ \  \varepsilon\in(0,1).
\ee
The fact  that $0<z_\varepsilon\leq1$ in $\Omega\times(0,\infty)$ for all $\varepsilon\in(0,1)$ further shows
$$
\int^T_0\int_{\Omega}|\nabla z_\varepsilon|^2=4\int^T_0\int_{\Omega}z_\varepsilon|\nabla z_\varepsilon^{\frac{1}{2}}|^2\leq4c_1, \ \ \forall \varepsilon\in(0,1),
$$
and thus,
$$
(z_\varepsilon)_{\varepsilon\in(0,1)}\ \ \text{is bounded in}\ L^2((0,T);W^{1,2}(\Omega))\ \ \text{for all }\ T>0.
$$
On the other hand, we also know from \eqref{3.46}  that
$$
(z_{\varepsilon t})_{\varepsilon\in(0,1)}\ \ \textmd{is bounded in}\ L^1((0,T);(W^{m,2}(\Omega))^\ast)\ \ \textmd{for all}\ T>0.
$$
In view of  Lemma \ref{lem3.3},  it follows that
$$
(w_\varepsilon)_{\varepsilon\in(0,1)} \text{ is relatively compact  in }  L^p((0,T);W^{1,p}(\Omega)) \text{ for } p\in \left[1, \frac{N+2}{N+1}\right).
$$
 Hence, these along with Aubin-Lions lemma \cite{43} enable us to find $(\varepsilon_j)_{j\in\mathbb{N}}\subset(0,1)$ with $\varepsilon_j\searrow0$ as $j\rightarrow\infty$ and nonnegative functions $z$ and $w$ on $\Omega\times(0,\infty)$ such that \eqref{3.42} and \eqref{3.43}  hold and that  $z_\varepsilon\rightarrow z$ a.e. in $\Omega\times(0,\infty)$ as $\varepsilon=\varepsilon_j\searrow0$. Thus,
$$
u_{\varepsilon}=(e^{kw_{\varepsilon}}z_{\varepsilon})^{-\frac{1}{p}}-1\rightarrow u=(e^{kw}z)^{-\frac{1}{p}}-1 \text{ a.e. in } \Omega\times(0,\infty)~\text{as } \varepsilon=\varepsilon_j\searrow0,
$$
so that \eqref{3.39} results from Lemma \ref{lem3.2} and the Vitali convergence theorem.

Observing $\ln^2(1+s)\leq 2e^{-1}s$ for $s\geq0$, from \eqref{2.13} of Lemma \ref{lem2.1} and \eqref{3.12} of Lemma \ref{lem3.4}, we infer that
$$
(\ln(1+v_\varepsilon))_{\varepsilon\in(0,1)}\ \ \text{is bounded in}\ L^2((0,T);W^{1,2}(\Omega))\ \ \text{for all }\ T>0.
$$
Also, from \eqref{3.34} of Lemma \ref{lem3.8},  we see that
$$
(\partial_t(\ln(1+v_\varepsilon))_{\varepsilon\in(0,1)}\ \ \textmd{is bounded in}\ L^1((0,T);(W^{m,2}(\Omega))^\ast)\ \ \textmd{for all}\ T>0.
$$
Then, as above, these along with  the uniform integrability of $v_\varepsilon$ in Lemma \ref{lem3.2} and Aubin-Lions lemma \cite{43} imply the convergence prosperities in \eqref{3.40} and \eqref{3.41}. The convergence \eqref{3.44} follows simply from \eqref{3.39} and \eqref{3.40}.

Combining  \eqref{2.12}, \eqref{2.13},  \eqref{3.1}, \eqref{vgradw-bdd}, \eqref{3.39},  \eqref{3.40}, \eqref{3.41},  \eqref{3.42}, \eqref{3.43}  and Fatou's lemma, one can readily derive  \eqref{2.1} and \eqref{3.390}.

To check \eqref{2.2}, for any nonnegative  $\psi\in C^\infty_0(\Omega\times[0,\infty))$, we take $T>0$ so that supp $\psi\subset \Omega\times (0,T)$ and then test the second equation in \eqref{2.10} against $\psi$ to see, for all $\varepsilon\in(0,1)$,  that
\be\label{2.2+}
\begin{split}
&-\int^\infty_0\int_{\Omega}\ln(v_\varepsilon+1)\psi_t-\int_{\Omega}
\ln(v_{0\varepsilon}+1)\psi(\cdot,0)\cr
&=\int^\infty_0\int_{\Omega}|\nabla\ln(v_\varepsilon+1)|^2\psi
-\int^\infty_0\int_{\Omega}\nabla\ln(v_\varepsilon+1)\cdot\nabla\psi\\
&\ \ +\int^\infty_0\int_{\Omega}\frac{v_\varepsilon}{v_\varepsilon+1}\nabla w_\varepsilon\cdot\nabla\psi-\int^\infty_0\int_{\Omega}\frac{v_\varepsilon\psi}{v_\varepsilon+1}
\nabla w_\varepsilon\cdot\nabla\ln(v_\varepsilon+1)\\
&\ \ \ \ +\int^\infty_0\int_{\Omega}\frac{v_\varepsilon\psi}{v_\varepsilon+1}(1-v_\varepsilon-u_\varepsilon).
\end{split} \ee
In light of the weak $L^2$-convergence in \eqref{3.41} and the lower semi-continuity  of $L^2$-norm with respect to weak convergence, we have
\be\label{2.2+2}
\liminf_{\varepsilon=\varepsilon_j\searrow0}\int^\infty_0\int_{\Omega}
|\nabla\ln(v_\varepsilon+1)|^2\psi\geq \int^\infty_0\int_{\Omega}|\nabla\ln(v+1)|^2\psi
\ee
and
\be\label{2.2+3}
\lim_{\varepsilon=\varepsilon_j\searrow0} \int^\infty_0\int_{\Omega}\nabla\ln(v_\varepsilon+1)\cdot\nabla\psi
=\int^\infty_0\int_{\Omega}\nabla\ln(v+1)\cdot\nabla\psi.
\ee
By   \eqref{vgradw-bdd}, \eqref{3.40} and  \eqref{3.43},  up to a further subsequence, we have
\be\label{2.2+4}
\lim_{\varepsilon=\varepsilon_j\searrow0} \int^\infty_0\int_{\Omega}\frac{v_\varepsilon}{v_\varepsilon+1}\nabla w_\varepsilon\cdot\nabla\psi
=\int^\infty_0\int_{\Omega}\frac{v}{v+1}\nabla w\cdot\nabla\psi.
\ee
Thanks to \eqref{vgradw-bdd} and \eqref{3.12}, we infer from H\"{o}lder inequality that
\begin{equation*}
\begin{split}
&\int^\infty_0\int_{\Omega}\left|\frac{v_\varepsilon}{v_\varepsilon+1}
\nabla w_\varepsilon\cdot\nabla\ln(v_\varepsilon+1)\right|\\
&\leq \left(\int^T_0\int_{\Omega}
\left|\frac{v_\varepsilon}{v_\varepsilon+1}
\nabla w_\varepsilon\right|^2\right)^\frac{1}{2}\left(\int^T_0\int_{\Omega}
\left|\nabla\ln(v_\varepsilon+1)\right|^2\right)^\frac{1}{2}\\
&\leq c_2(1+T), \ \ \ \forall \varepsilon\in(0,1),
\end{split}
\end{equation*}
which together with \eqref{3.40}, \eqref{3.41} and \eqref{3.43} implies (up to a further subsequence) that
\be\label{2.2+5}
\lim_{\varepsilon=\varepsilon_j\searrow0} \int^\infty_0\int_{\Omega} \frac{v_\varepsilon \psi}{v_\varepsilon+1}
\nabla w_\varepsilon\cdot\nabla\ln(v_\varepsilon+1)= \int^\infty_0\int_{\Omega} \frac{v \psi}{v+1}
\nabla w\cdot\nabla\ln(v+1).
\ee
By the convergence properties in \eqref{2.11}, \eqref{3.39}, \eqref{3.40}, \eqref{3.41} and the uniform integrability in \eqref{3.1}, one can easily derive that
\be\label{2.2+6}
\begin{cases}
\lim_{\varepsilon=\varepsilon_j\searrow0} \int^\infty_0\int_{\Omega}\ln(v_\varepsilon+1)\psi_t= \int^\infty_0\int_{\Omega}\ln(v+1)\psi_t,\\[0.25cm]
\lim_{\varepsilon=\varepsilon_j\searrow0} \int_{\Omega}
\ln(v_{0\varepsilon}+1)\psi(\cdot,0)=\int_{\Omega}
\ln(v_0+1)\psi(\cdot,0),\\[0.25cm]
\lim_{\varepsilon=\varepsilon_j\searrow0}\int^\infty_0\int_{\Omega}
\frac{v_\varepsilon\psi}{v_\varepsilon+1}(1-v_\varepsilon-u_\varepsilon)=\int^\infty_0\int_{\Omega}
\frac{v\psi}{v+1}(1-v-u).
\end{cases}
 \ee
 Substituting \eqref{2.2+2},  \eqref{2.2+3}, \eqref{2.2+4}, \eqref{2.2+5} and \eqref{2.2+6} into
 \eqref{2.2+}, we arrive at the inequality stated in  \eqref{2.2}.

In the similar manners, with the aid of those established convergence properties and Fatou's lemma, one can readily check other statements, cf.  \cite[Lemma 3.10]{4} and  \cite[Lemma 7.1]{33}, for instance.
\end{proof}

\begin{lemma}\label{lem3.10}
For $p, k, T>0$, both $\left((u_{\varepsilon}+1)^{-p}u_\varepsilon(1-u^{\theta-1}_\varepsilon -v_\varepsilon)e^{-kw_{\varepsilon}}\right)_{\varepsilon\in(0,1)}$ and $\left((u_{\varepsilon}+1)^{-p}w_\varepsilon e^{-kw_{\varepsilon}}\right)_{\varepsilon\in(0,1)}$ are uniformly integrable  over $\Omega\times(0,T)$.
\end{lemma}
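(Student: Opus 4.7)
\medskip

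\textbf{Proof plan for Lemma \ref{lem3.10}.} I would split the two families and treat the easier one first. For the second family, note that the real function $s \mapsto s e^{-ks}$ attains its maximum $\frac{1}{ek}$ at $s=1/k$, and $(u_\varepsilon+1)^{-p}\le 1$, so
$$
0\le (u_\varepsilon+1)^{-p}w_\varepsilon e^{-kw_\varepsilon}\le \frac{1}{ek}\qquad \text{on } \Omega\times(0,T).
$$
Hence the second family is uniformly bounded and therefore trivially uniformly integrable over the finite-measure set $\Omega\times(0,T)$.

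For the first family, write $g_\varepsilon := (u_\varepsilon+1)^{-p}u_\varepsilon |1-u_\varepsilon^{\theta-1}-v_\varepsilon| e^{-kw_\varepsilon}$; it is enough to verify the standard Vitali criterion: given $\eta>0$, to produce $\delta>0$ such that $\int\!\int_E g_\varepsilon<\eta$ for every measurable $E\subset \Omega\times(0,T)$ with $|E|<\delta$ and every $\varepsilon\in(0,1)$. The plan is to split $E=E_M^\le\cup E_M^{>}$ according to $\{u_\varepsilon\le M\}$ and $\{u_\varepsilon>M\}$ for a parameter $M$ to be chosen large. On the high set, the factor $(u_\varepsilon+1)^{-p}\le (M+1)^{-p}$ together with Lemma \ref{lem3.1} gives
$$
\int\!\int_{E_M^{>}}g_\varepsilon\le (M+1)^{-p}\int_0^T\!\!\int_\Omega u_\varepsilon |1-u_\varepsilon^{\theta-1}-v_\varepsilon|\le (M+1)^{-p}C_1(1+T),
$$
which can be made smaller than $\eta/2$ by choosing $M$ sufficiently large (depending only on $\eta,T,p$).

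With $M$ now fixed, on the low set I would bound $g_\varepsilon\le u_\varepsilon(1+u_\varepsilon^{\theta-1}+v_\varepsilon)\le M(1+M^{\theta-1})+Mv_\varepsilon$. Then
$$
\int\!\int_{E_M^{\le}}g_\varepsilon\le M(1+M^{\theta-1})|E|+M\int\!\int_E v_\varepsilon,
$$
and H\"older's inequality combined with the $\varepsilon$-independent bound \eqref{2.15} yields
$$
\int\!\int_E v_\varepsilon\le |E|^{1/2}\,\|v_\varepsilon\|_{L^2(\Omega\times(0,T))}\le |E|^{1/2}\sqrt{m_2T+1+\|v_0\|_{L^1}}.
$$
Choosing $\delta$ small enough (depending on $M$, $T$ and the constants above) makes this contribution $<\eta/2$, completing the uniform-integrability estimate. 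The main (minor) obstacle is just the order of the quantifiers: $M$ has to be chosen first from the $L^1$ tail estimate of Lemma \ref{lem3.1}, and only afterwards can $\delta$ be chosen to absorb the residual polynomial-in-$M$ factors against the measure-of-$E$. No new estimate beyond Lemmas \ref{lem2.1} and \ref{lem3.1} is needed.
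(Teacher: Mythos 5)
Your proof is correct, and for the second family it is in fact cleaner than what the paper seems to invoke. The paper's one-line proof cites \eqref{3.1}, \eqref{w-bdd} and the template of Lemma \ref{lem3.2}; in particular, \eqref{w-bdd} suggests they intend to dominate $(u_\varepsilon+1)^{-p}w_\varepsilon e^{-kw_\varepsilon}\le w_\varepsilon$ and then appeal to an $L^p$ bound on $w_\varepsilon$ plus H\"older, but for $N\geq 4$ and $\theta$ close to $2$ the exponent $p$ available in \eqref{w-epsi-bdd} collapses to $1$, so that route is not robust. Your pointwise observation $s e^{-ks}\le \tfrac{1}{ek}$, together with $(u_\varepsilon+1)^{-p}\le 1$, gives a uniform $L^\infty$ bound on that family over the finite-measure set $\Omega\times(0,T)$, which is both simpler and dimension-free — it does not need \eqref{w-bdd} at all. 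For the first family, Lemma \ref{lem3.2} is really a single-H\"older statement and does not apply verbatim, since $u_\varepsilon|1-u_\varepsilon^{\theta-1}-v_\varepsilon|$ is only controlled in $L^1$ by \eqref{3.1}; your $M$-cut on $\{u_\varepsilon>M\}$ versus $\{u_\varepsilon\le M\}$ is the right way to convert the decaying prefactor $(u_\varepsilon+1)^{-p}$ into genuine uniform integrability: the tail is killed by $(M+1)^{-p}C_1(1+T)$, and on the bounded set the remaining growth is dominated by $M(1+M^{\theta-1})+Mv_\varepsilon$, with the $v_\varepsilon$ contribution absorbed by Cauchy--Schwarz against \eqref{2.15}. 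The quantifier order you flag (fix $M$ from the tail, then shrink $\delta$) is indeed the only subtlety. In short: same conclusion, a slightly different and arguably more transparent route, relying only on Lemmas \ref{lem2.1} and \ref{lem3.1} as you note.
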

\begin{proof}
The desired conclusions follows  from  \eqref{3.1} and \eqref{w-bdd}, see Lemma \ref{lem3.2}.
\end{proof}

Now, we combine the approximation features of $(u_\varepsilon, v_\varepsilon, w_\varepsilon)$ gathered thus far to finally verify the remaining parts, especially the crucial superposition property \eqref{2.9},  of Definition \ref{def2.1} as follows.
\begin{lemma}\label{lem3.11}
Let $p>0$ and $k>\frac{\sqrt{p}(p+1)}{2}$, and let $\phi$, $\Phi$ and $\xi$ be defined by  \eqref{3.17}. Then \eqref{2.4}--\eqref{2.8} hold, and \eqref{2.9} is fulfilled for every nonnegative $\varphi\in C^\infty_0(\overline{\Omega}\times[0,\infty))$.
\end{lemma}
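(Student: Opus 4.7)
The overall plan is to pass to the limit $\varepsilon = \varepsilon_j \searrow 0$ in the time-integrated form of the testing identity \eqref{3.23} of Lemma~\ref{lem3.6}, after choosing $\phi, \xi, \Phi$ as in \eqref{3.17}. The algebraic conditions in \eqref{2.5} are immediate from \eqref{3.17} (and were essentially rederived in Lemma~\ref{lem3.5}), while the mass inequality \eqref{2.4} was already included in the conclusion of Lemma~\ref{lem3.9} (integrate the $u_\varepsilon$-equation, use Fatou together with the a.e.\ convergence \eqref{3.39} and the uniform integrability of $u_\varepsilon(1 - u_\varepsilon^{\theta-1} - v_\varepsilon)$ coming from Lemma~\ref{lem3.1} and Lemma~\ref{lem3.2}). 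For \eqref{2.7}, I would note that for our specific choice $\phi \le 1$, $0 < \xi \le 1$ and $|\xi'| = k\xi \le k$, so each listed quantity is dominated pointwise by a product of a bounded factor and one of $u_\varepsilon$, $v_\varepsilon$, $w_\varepsilon$, $|\nabla w_\varepsilon|$ or $u_\varepsilon|1 - u_\varepsilon^{\theta-1} - v_\varepsilon|$, all $\varepsilon$-uniformly in $L^1_{loc}$ by Lemmas~\ref{lem2.1}, \ref{lem3.1}, \ref{lem3.4}; a.e.\ convergence plus Fatou then transfers these bounds to the limit. For \eqref{2.8}, the identity $\Phi(u)\sqrt{\xi(w)} = -2\sqrt{(p{+}1)/p}\, z^{1/2}$ combined with the $L^2_{loc}$ bound \eqref{3.29} on $\nabla z_\varepsilon^{1/2}$ and the pointwise bound $0 \le z_\varepsilon \le 1$ gives the required regularity in the limit by weak lower semicontinuity.

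To establish \eqref{2.9}, I would integrate \eqref{3.23} in time against a nonnegative $\varphi \in C^\infty_0(\overline{\Omega}\times[0,\infty))$; integration by parts in $t$ on the first term yields
\[
-\int_0^\infty\!\!\int_\Omega \phi(u_\varepsilon)\xi(w_\varepsilon)\varphi_t \;-\; \int_\Omega \phi(u_{0\varepsilon})\xi(w_{0\varepsilon})\varphi(\cdot,0),
\]
which converges to the left-hand side of \eqref{2.9} thanks to \eqref{2.11}, \eqref{3.39}, \eqref{3.42} and dominated convergence (the integrand is bounded by $\|\varphi_t\|_{L^\infty}$). The two manifestly nonnegative integrals on the right of \eqref{3.23} enter \eqref{2.9} with a minus sign, so it suffices to obtain $\liminf$ lower bounds on them.

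For the quadratic gradient term, the bounds \eqref{3.29} and \eqref{3.30} of Lemma~\ref{lem3.7} give that $\nabla z_\varepsilon^{1/2}$ and $z_\varepsilon^{1/2}\nabla w_\varepsilon$ are $\varepsilon$-uniformly bounded in $L^2_{loc}(\overline{\Omega}\times[0,\infty))$, hence (after extracting a further subsequence) they converge weakly in $L^2_{loc}$, and the a.e.\ convergences \eqref{3.39}, \eqref{3.42}, \eqref{3.43} identify their limits as $\nabla z^{1/2}$ and $z^{1/2}\nabla w$ respectively. Since $\varphi \ge 0$, convexity of $\xi \mapsto |\xi|^2$ then yields
\[
\liminf_{\varepsilon_j \searrow 0}\int_0^\infty\!\!\int_\Omega\Bigl|\nabla z_{\varepsilon_j}^{1/2} + \tfrac{2k + p(p+1)u_{\varepsilon_j}/(u_{\varepsilon_j}+1)}{2\sqrt{p(p+1)}}\, z_{\varepsilon_j}^{1/2}\nabla w_{\varepsilon_j}\Bigr|^2\varphi \;\ge\; \int_0^\infty\!\!\int_\Omega\Bigl|\cdots\Bigr|^2\varphi,
\]
which after the identification $\nabla(\Phi(u)\sqrt{\xi(w)}) = -2\sqrt{(p{+}1)/p}\,\nabla z^{1/2}$ matches the corresponding term in \eqref{2.9} with the correct coefficient $\frac{4(p+1)}{p}$. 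The $|\nabla w|^2$-weighted term is handled by Fatou's lemma: thanks to $k > \sqrt{p}(p{+}1)/2$, the coefficient $\{4k^2 - p(p{+}1)^2 u_\varepsilon^2/(u_\varepsilon+1)^2\}/(4(p{+}1))$ is bounded below by a positive constant, the factor $z_\varepsilon|\nabla w_\varepsilon|^2\varphi$ is nonnegative and converges a.e.\ to $z|\nabla w|^2\varphi$, so $\liminf$ dominates the limit integrand.

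The remaining linear terms on the right of \eqref{3.23} pass to the limit by a combination of (i) the strong $L^p_{loc}$ convergence of $\nabla w_\varepsilon$ from Lemma~\ref{lem3.3}/\eqref{3.43} paired with the weak $L^2_{loc}$ convergence of $\nabla z_\varepsilon^{1/2}$ for the terms containing $\nabla z_\varepsilon^{1/2}\cdot\nabla\varphi$ and $\frac{u_\varepsilon z_\varepsilon}{u_\varepsilon+1}\nabla w_\varepsilon\cdot\nabla\varphi$, and (ii) the Vitali convergence theorem for the kinetic term $\frac{u_\varepsilon z_\varepsilon}{u_\varepsilon+1}(1-u_\varepsilon^{\theta-1}-v_\varepsilon)\varphi$ and the signal term $w_\varepsilon z_\varepsilon\varphi$, using the a.e.\ convergences from Lemma~\ref{lem3.9} together with the uniform integrability from Lemma~\ref{lem3.10}; the $\varepsilon$-regularization factor $(u_\varepsilon+v_\varepsilon)/(1 + \varepsilon(u_\varepsilon+v_\varepsilon))$ passes to $u+v$ by \eqref{3.44}. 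The main obstacle, which I expect to be the most delicate point, is the simultaneous weak convergence step for $\nabla z_\varepsilon^{1/2}$ and $z_\varepsilon^{1/2}\nabla w_\varepsilon$: one must verify carefully that both weak $L^2_{loc}$ limits can be identified a.e.\ (so that the convex lower-semicontinuity inequality is meaningful in terms of $u,w$), and that the decomposition of the square into $\nabla z^{1/2}$ and the drift piece is compatible with the coefficient matching between \eqref{3.23} and \eqref{2.9}; the remaining routine identifications then deliver \eqref{2.9}.
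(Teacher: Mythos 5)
Your proposal follows essentially the same route as the paper: integrate the testing identity \eqref{3.23} in time, use weak $L^2_{loc}$ convergence of $\nabla z_\varepsilon^{1/2}$ and $z_\varepsilon^{1/2}\nabla w_\varepsilon$ (identified via a.e.\ convergence, in the paper via Egorov) together with weak lower semicontinuity of the $L^2$-norm for the quadratic gradient term, Fatou for the nonnegative $|\nabla w_\varepsilon|^2$-weighted term, strong/weak pairing for the cross terms, and Vitali for the kinetic and signal terms. The step you flag as the "main obstacle" -- identifying the weak limits a.e.\ -- is handled routinely in the paper by Egorov's theorem and poses no genuine difficulty, so your argument is sound and matches the paper's.
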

\begin{proof}
With the $z_\varepsilon$ notation in \eqref{z-def+}, for  $T>0$, Lemma \ref{lem3.7} implies that
 \begin{equation*}
 \begin{split}
 &\left\|z_\varepsilon^\frac{1}{2} \right\|_{L^2((0,T);W^{1,2}(\Omega))}+\left\|z_\varepsilon^\frac{1}{2}\nabla w_{\varepsilon}\right\|_{L^2(\Omega\times(0,T))}+\left\|\frac{u_{\varepsilon} z_\varepsilon^\frac{1}{2} }{u_{\varepsilon}+1} \nabla w_{\varepsilon}\right\|_{L^2(\Omega\times(0,T))}\\
 &\leq c_1(1+T), \  \ \ \ \forall \varepsilon\in (0,1),
 \end{split}
 \end{equation*}
 which upon identifying $z_\varepsilon=\phi(u_\varepsilon)\xi(w_\varepsilon)$ in \eqref{z-def+}  directly shows \eqref{2.8}.

Then, with  $(\varepsilon_j)_{j\in\mathbb{N}}$ provided by  Lemma \ref{lem3.9},  we know, as $\varepsilon=\varepsilon_j\searrow0$ (in the sequel, all limits are taken in this way, we should not specify them any more), that
$$
z_\varepsilon^\frac{1}{2}\rightarrow z^\frac{1}{2} ~~\textmd{a.e.}~\textmd{in}~\Omega\times(0,T)~~\textmd{as}~\textmd{well}~\textmd{as}
$$
$$
z_\varepsilon^\frac{1}{2}\nabla w_{\varepsilon}\rightarrow z^\frac{1}{2}\nabla w~~\textmd{a.e.}~\textmd{in}~\Omega\times(0,T)~~\textmd{and}
$$
$$
\frac{u_{\varepsilon} z_\varepsilon^\frac{1}{2} }{u_{\varepsilon}+1} \nabla w_{\varepsilon}\rightarrow \frac{u z^\frac{1}{2} }{u+1} \nabla w~~\textmd{a.e.}~\textmd{in}~\Omega\times(0,T),
$$
which in conjunction with Egorov's theorem allow us to infer
\be\label{3.48}
\nabla z_\varepsilon^\frac{1}{2}\rightharpoonup\nabla z^\frac{1}{2}~~\textmd{in}~L^2(\Omega\times(0,T))
\ee
and
\be\label{3.49}
z_\varepsilon^\frac{1}{2}\nabla w_{\varepsilon}\rightharpoonup z^\frac{1}{2}\nabla w~~\textmd{in}~L^2(\Omega\times(0,T))
\ee
as well as
\be\label{3.50}
\frac{u_{\varepsilon} z_\varepsilon^\frac{1}{2} }{u_{\varepsilon}+1} \nabla w_{\varepsilon}\rightharpoonup \frac{u z^\frac{1}{2} }{u+1} \nabla w~~\textmd{in}~L^2(\Omega\times(0,T)).
\ee
These in particular imply (cf. \eqref{3.23}) that
\be\label{3.51}
\begin{split}
&\nabla z_\varepsilon^\frac{1}{2}+\frac{2k+ p(p+1)\frac{u_{\varepsilon} }{u_{\varepsilon}+1}}{2\sqrt{p(p+1)}}z_\varepsilon^\frac{1}{2} \nabla w_{\varepsilon} \\
 &\ \ \rightharpoonup \nabla z_\varepsilon^\frac{1}{2}+\frac{2k+ p(p+1)\frac{u }{u+1}}{2\sqrt{p(p+1)}}z^\frac{1}{2} \nabla w  \text{ in } L^2(\Omega\times(0,T)).
\end{split} \ee
 and, by the fact   $z_\varepsilon\rightarrow z$ a.e. in $\Omega\times(0,\infty)$ from Lemma \ref{lem3.9} and the dominated convergence theorem, we readily infer that
\be\label{3.53}
z_\varepsilon \rightarrow z \  \text{ and } \  z_\varepsilon^\frac{1}{2}\rightarrow z^\frac{1}{2}~~\textmd{in}~L^2(\Omega\times(0,T)),
\ee
and so  we conclude  that
\be\label{3.52}
\begin{split}
\frac{u_{\varepsilon}z_{\varepsilon}}{1+u_{\varepsilon}}\nabla w_{\varepsilon}&=z_{\varepsilon}^\frac{1}{2}\cdot \frac{u_{\varepsilon}z_{\varepsilon}^\frac{1}{2}}{1+u_{\varepsilon}}\nabla w_{\varepsilon} \\
&\rightharpoonup z^\frac{1}{2}\cdot \frac{uz^\frac{1}{2}}{1+u}\nabla w= \frac{uz}{1+u}\nabla w~~\textmd{in}~L^1(\Omega\times(0,T)).
\end{split} \ee
Also,  \eqref{3.48} along  with \eqref{3.53} entails  that
\be\label{3.54}
z_\varepsilon^\frac{1}{2}\nabla z_\varepsilon^\frac{1}{2}\rightharpoonup z^\frac{1}{2}\nabla z^\frac{1}{2} \text{ in } L^1(\Omega\times(0,T)).
\ee
By the convergence in Lemma \ref{lem3.9} and uniform integrability in Lemma \ref{lem3.10},  the Vitali convergence theorem asserts that
\be\label{3.55}
 \frac{u_{\varepsilon}z_\varepsilon}{1+u_\varepsilon}(1-u^{\theta-1}_\varepsilon -v_\varepsilon)\rightarrow \frac{u z}{1+u}(1-u^{\theta-1} -v)  \  \text{ in } L^1(\Omega\times(0,T)).
\ee
 Lemmas \ref{lem3.9} and \ref{lem3.10} again together with the Vitali convergence theorem shows
\be\label{3.56}
w_{\varepsilon}z_\varepsilon \rightarrow wz\ \ \textmd{in}~L^1(\Omega\times(0,T))
\ee
and
\be\label{3.57}
\frac{( u_{\varepsilon}+ v_{\varepsilon})z_\varepsilon}{1+\varepsilon( u_{\varepsilon}+ v_{\varepsilon})}\rightarrow (u+ v)z\ \ \ \textmd{in}~L^1(\Omega\times(0,T)).
\ee
 Finally, given nonnegative function  $\varphi\in C^\infty_0(\Omega\times[0,\infty))$, we take $T>0$ such that supp $\varphi\subset\Omega\times(0,T)$. Integrating \eqref{3.23} in  $t$, we derive, for all $\varepsilon\in(0,1)$,  that
 \begin{equation*}
\begin{split}
&\frac{4(p+1)}{p}\int_0^\infty\int_{\Omega}\left|\nabla z_\varepsilon^\frac{1}{2}+\frac{2k+ p(p+1)\frac{u_{\varepsilon} }{u_{\varepsilon}+1}}{2\sqrt{p(p+1)}}z_\varepsilon^\frac{1}{2} \nabla w_{\varepsilon} \right|^2\varphi\\
&\ \ +\int_0^\infty\int_{\Omega}\frac{4k^2-p(p+1)^2\frac{u_\varepsilon^2}{(u_\varepsilon+1)^2}}{4(p+1)}
z_\varepsilon|\nabla w_\varepsilon|^2\varphi \\
&= \int_\Omega z_\varepsilon(\cdot, 0)\varphi(\cdot, 0)+\int_0^\infty\int_{\Omega}  z_\varepsilon \varphi_t -2\int_0^\infty\int_{\Omega}z_\varepsilon^\frac{1}{2}\nabla z_\varepsilon^\frac{1}{2}\cdot\nabla\varphi\\
&\ \   -p\int_0^\infty\int_{\Omega}\frac{u_\varepsilon z_\varepsilon}{u_\varepsilon+1} \nabla w_\varepsilon\cdot\nabla\varphi-p\int_0^\infty\int_{\Omega}\frac{u_\varepsilon z_\varepsilon}{u_\varepsilon+1} (1-u_\varepsilon^{\theta-1}- v_\varepsilon)\varphi\\
 & \ \ +k\int_0^\infty\int_{\Omega}w_{\varepsilon}z_\varepsilon\varphi-k\int_0^\infty\int_{\Omega}\frac{( u_{\varepsilon}+ v_{\varepsilon})z_\varepsilon}{1+\varepsilon( u_{\varepsilon}+ v_{\varepsilon})}\varphi.
\end{split}
\end{equation*}
It follows easily from $k>\frac{\sqrt{p}(p+1)}{2}$  that $\frac{4k^2-p(p+1)^2\frac{u_\varepsilon^2}{(u_\varepsilon+1)^2}}{4(p+1)}>0$. Then, by the convergence properties in Lemma \ref{lem3.9} and Fatou's lemma,
\begin{eqnarray*}
&&\int_0^\infty\int_{\Omega}\frac{4k^2-p(p+1)^2\frac{u^2}{(u+1)^2}}{4(p+1)}
z|\nabla w|^2\varphi\cr
&&\leq\liminf_{\varepsilon=\varepsilon_j\searrow0}\int_0^\infty\int_{\Omega}\frac{4k^2-p(p+1)^2\frac{u_\varepsilon^2}{(u_\varepsilon+1)^2}}{4(p+1)}
z_\varepsilon|\nabla w_\varepsilon|^2\varphi.
\end{eqnarray*}
Then  combining   \eqref{3.51} and the lower semi-continuity  of $L^2$-norm with respect to weak convergence, as well as \eqref{3.53}, \eqref{3.52}, \eqref{3.54}, \eqref{3.55}, \eqref{3.56}, \eqref{3.57} and \eqref{2.11}, we finally infer that
\begin{equation*}
\begin{split}
&\frac{4(p+1)}{p}\int_0^\infty\int_{\Omega}\Bigl|\nabla z^\frac{1}{2}+\frac{2k+ p(p+1)\frac{u }{u+1}}{2\sqrt{p(p+1)}}z^\frac{1}{2} \nabla w \Bigr|^2\varphi\\
&\ \ +\int_0^\infty\int_{\Omega}\frac{4k^2-p(p+1)^2\frac{u^2}{(u+1)^2}}{4(p+1)}
z|\nabla w|^2\varphi \\
&\leq  \liminf_{\varepsilon=\varepsilon_j\searrow0}\Bigl\{\int_\Omega z_\varepsilon(\cdot, 0)\varphi(\cdot, 0)+\int_0^\infty\int_{\Omega}  z_\varepsilon \varphi_t -2\int_0^\infty\int_{\Omega}z_\varepsilon^\frac{1}{2}\nabla z_\varepsilon^\frac{1}{2}\cdot\nabla\varphi\\
&\ \   -p\int_0^\infty\int_{\Omega}\frac{u_\varepsilon z_\varepsilon}{u_\varepsilon+1} \nabla w_\varepsilon\cdot\nabla\varphi-p\int_0^\infty\int_{\Omega}\frac{u_\varepsilon z_\varepsilon}{u_\varepsilon+1} (1-u_\varepsilon^{\theta-1}- v_\varepsilon)\varphi\\
 & \ \ +k\int_0^\infty\int_{\Omega}w_{\varepsilon}z_\varepsilon\varphi-k\int_0^\infty\int_{\Omega}\frac{( u_{\varepsilon}+ v_{\varepsilon})z_\varepsilon}{1+\varepsilon( u_{\varepsilon}+ v_{\varepsilon})}\varphi\Bigr\}\\
 &=\int_\Omega z(\cdot, 0)\varphi(\cdot, 0)+\int_0^\infty\int_{\Omega}  z \varphi_t -2\int_0^\infty\int_{\Omega}z^\frac{1}{2}\nabla z^\frac{1}{2}\cdot\nabla\varphi\\
&\ \   -p\int_0^\infty\int_{\Omega}\frac{u z}{u+1} \nabla w\cdot\nabla\varphi-p\int_0^\infty\int_{\Omega}\frac{u z}{u+1} (1-u^{\theta-1}- v)\varphi\\
 & \ \ +k\int_0^\infty\int_{\Omega}w_{\varepsilon}z\varphi-k\int_0^\infty\int_{\Omega} ( u+ v)z \varphi
\end{split}
\end{equation*}
With $\phi,\Phi$ and $\xi$ being as in \eqref{3.17}, from the specialization of \eqref{2.16}  in \eqref{3.23}, we establish  \eqref{2.9} for any such $\varphi$. Together Lemma \ref{lem3.5} with \eqref{3.48}--\eqref{3.57}, we can readily obtain \eqref{2.5}--\eqref{2.7}.
\end{proof}

\begin{proof}[Proof of Theorem \ref{Thm1.1}]
With $\phi,\Phi$ and $\xi$ defined  by  \eqref{3.17},  all ingredients in the  notion  of generalized solution to \eqref{1.1} in Definition \ref{def2.1} have been guaranteed by Lemmas \ref{lem3.9} and  \ref{lem3.11}, and so the  proof is complete.
\end{proof}

\textbf{Acknowledgments}  The authors would like to thank Prof.  Michael Winkler for sharing his preprint \cite{33}.  G. Ren was supported by the National Natural Science Foundation of China (No.12001214). T. Xiang was funded by the National Natural Science Foundation of China (Nos. 12071476  and 11871226) and  the Research Funds  of Renmin University of China (No. 2018030199).


\begin{thebibliography}{99}
 \footnotesize


\bibitem{10} I. Ahn, C. Yoon, Global well-posedness and stability analysis of prey-predator model with indirect prey-taxis, J. Differential Equations, 268 (2020) 4222--4255.

\bibitem{16} X. Bai, M. Winkler, Equilibration in a fully parabolic two-species chemotaxis system with competitive kinetics, Indiana Univ. Math. J., 65 (2016) 553--583.

\bibitem{8} M. Hirata, S. Kurima, M. Mizukami, and T. Yokota, Boundedness and stabilization in a two-dimensional two-species chemotaxis-Navier-Stokes system with competitive kinetics, J. Differential Equations, 263 (2017) 470--490.

\bibitem{8.1} E. Jeong, J. Kim, J. Lee, Stabilization in a two dimensional two-species aerotaxis-Navier-Stokes system, Nonlinear Anal., Real World Appl., 57 (2021) 103187.

\bibitem{22} H.  Jin, Z. Liu, S. Shi, J. Xu, Boundedness and stabilization in a two-species chemotaxis-competition system with signal-dependent diffusion and sensitivity, J. Differential Equations, 267(1) (2019) 494--524.

\bibitem{11} H.  Jin, Z.A. Wang, Global dynamics and spatio-temporal patterns of predator-prey systems with density-dependent motion, Euro. J. Appl. Math., (2020) In press.


\bibitem{JX19}  H. Jin, T. Xiang,   Convergence rates of solutions for a two-species chemotaxis-Navier-Stokes system with competitive kinetics, Discrete Contin. Dyn. Syst. Ser. B, 24 (2019)  1919--1942.

\bibitem{La15-JDE}J. Lankeit, Eventual smoothness and asymptotics in a three-dimensional chemotaxis system with logistic source,  J. Differential Equations,  258  (2015) 1158--1191.

\bibitem{38} E. Lankeit, J. Lankeit, On the global generalized solvability of a chemotaxis model with signal absorption and logistic growth terms, Nonlinearity, 32 (2019) 1569--1596.

\bibitem{19} X. Li, Y. Wang, On a fully parabolic chemotaxis system with Lotka-Volterra competitive kinetics, J. Math. Anal. Appl., 471 (2019) 584--598.

\bibitem{18} K. Lin, C. Mu, Convergence of global and bounded solutions of a two-species chemotaxis model with a logistic source, Discrete Contin. Dyn. Syst. Ser. B, 22 (2017) 2233--2260.

\bibitem{23} K. Lin, C. Mu, H. Zhong, A new approach toward stabilization in a two-species chemotaxis model with logistic source, Comput. Math. Appl., 75 (2018) 837--849.

\bibitem{25} K. Lin, T. Xiang, On global solutions and blow-up for a short-ranged chemical signaling loop, Journal of Nonlinear Science, 29 (2019) 551--591.

\bibitem{26} K. Lin, T. Xiang, On boundedness, blow-up and convergence in a two-species and two-stimuli chemotaxis system with/without loop, Calc. Var. Partial Differential Equations, (2020) 59:108.

\bibitem{27} B. Liu, G. Ren, Global existence and asymptotic behavior in a three-dimensional two-species chemotaxis-Stokes system with tensor-valued sensitivity, J. Korean Math. Soc., 57(1) (2020) 215--247.

\bibitem{17} M. Mizukami, Boundedness and asymptotic stability in a two-species chemotaxis-competition model with signal-dependent sensitivity, Discrete Contin. Dyn. Syst. Ser. B, 22(6) (2017) 2301--2319.

\bibitem{5} G. Ren, Boundedness and stabilization in a two-species chemotaxis system with logistic source, Z. Angew. Math. Phys., (2020) 77:177.

\bibitem{6} G. Ren, Global solvability in a two-species chemotaxis system with logistic source, J. Math. Phys., 62 (2021) 041504.

\bibitem{28} G. Ren, B. Liu, Global boundedness and asymptotic behavior in a two-species chemotaxis-competition system with two signals, Nonlinear Anal., Real World Appl., 48 (2019) 288--325.

\bibitem{40} G. Ren, B. Liu, Global boundedness and asymptotic behavior in a quasilinear attraction-repulsion chemotaxis model with nonlinear signal production and logistic-type source, Math. Models Methods Appl. Sci., 30(13) (2020) 2619--2689.

\bibitem{41} G. Ren, B. Liu, Global dynamics for an attraction-repulsion chemotaxis model with logistic source, J. Differential Equations, 268(8) (2020) 4320--4373.

\bibitem{4} G. Ren, B. Liu, Global existence and asymptotic behavior in a two-species chemotaxis system with logistic source, J. Differential Equations, 269(2) (2020) 1484--1520.

\bibitem{15} G. Ren, B. Liu, Global solvability and asymptotic behavior in a two-species chemotaxis system with Lotka-Volterra competitive kinetics, Math. Models Methods Appl. Sci., (2021) In press.

\bibitem{9} G. Ren, Y. Shi, Global boundedness and stability of solutions for prey-taxis model with handling and searching predators, Nonlinear Anal., Real World Appl., 60 (2021) 103306.

\bibitem{RX21-pre} G. Ren, T.Xiang, Global solvability and asymptotical behavior in a two-species chemotaxis model with signal absorption, arXiv:2102.03713.

\bibitem{7} S. Qiu, C. Mu, X. Tu, Global dynamics of a two-species chemotaxis-consumption system with signal-dependent motilities, Nonlinear Analysis: Real World Applications, 57 (2021) 103190.

\bibitem{43} R. Temam, Navier-Stokes equations: Theory and numerical analysis, Studies in Mathematics and its Applications, Vol. 2. North-Holland, Amsterdam, 1977.

 \bibitem{TMQY-20} X. Tu,  C.  Mu,  S.  Qiu and L.  Yang,  Boundedness in the higher-dimensional fully parabolic chemotaxis-competition system with loop,  Z. Angew. Math. Phys., 71 (2020)  185.

\bibitem{VG-17-NA} G. Viglialoro,   Boundedness properties of very weak solutions to a fully parabolic chemotaxis-system with logistic source,  Nonlinear Anal. Real World Appl.,  34 (2017) 520--535.

\bibitem{12.1} J. Wang, Global existence and boundedness of a forager-exploiter system with nonlinear diffusions, J. Differential Equations, 276 (2021) 460--492.

\bibitem{12} J. Wang, M.X. Wang, Boundedness and global stability of the two-predator and one-prey models with nonlinear prey-taxis, Z. Angew. Math. Phys., (2018) 69:63.

\bibitem{24} L. Wang, Improvement of conditions for boundedness in a two-species chemotaxis competition system of parabolic-parabolic-elliptic type, J. Math. Anal. Appl., 484 (2020) 123705.

\bibitem{2} L. Wang, C. Mu, X. Hu, P. Zheng, Boundedness and asymptotic stability of solutions to a two-species chemotaxis system with consumption of chemoattractant, J. Differential Equations, 264 (2018) 3369--3401.

\bibitem{29} L. Wang, J. Zhang, C. Mu, X. Hu, Boundedness and stabilization in a two-species chemotaxis system with two chemicals, Discrete Contin. Dyn. Syst. Ser. B, 25 (2020) 191--221.

\bibitem{29.1} Z. Wang, J. Xu, On the Lotka-Volterra competition system with dynamical resources and density-dependent diffusion, J. Math. Biol., (2021) 82:7.

\bibitem{42} M. Winkler, Aggregation vs. global diffusive behavior in the higher-dimensional Keller-Segel model, J. Differential Equations, 248 (2010) 2889--2905.

\bibitem{35} M. Winkler, Large-data global generalized solutions in a chemotaxis system with tensor-valued sensitivities, SIAM J. Math. Anal., 47(4) (2015) 3092--3115.

\bibitem{36} M. Winkler, Asymptotic homogenization in a three-dimensional nutrient taxis system involving food-supported proliferation, J. Differential Equations, 263 (2017) 4826--4869.

\bibitem{39} M. Winkler, A three-dimensional Keller-Segel-Navier-Stokes system with logistic source: global weak solutions and asymptotic stabilization, J. Functional Analysis, 276(5) (2019) 1339--1401.

\bibitem{37} M. Winkler, Global generalized solutions to a multi-dimensional doubly tactic resource consumption model accounting for social interactions, Math. Models Methods Appl. Sci., 29(3) (2019) 373--418.

\bibitem{34} M. Winkler, The role of superlinear damping in the construction of solutions to drift-diffusion problems with initial data in $L^1$, Adv. Nonlin. Anal., 9 (2020) 526--566.

\bibitem{33} M. Winkler, $L^1$ solutions to parabolic Keller-Segel systems involving arbitrary superlinear degradation, Preprint.

\bibitem{Win21-ANS} M. Winkler,   Attractiveness of constant states in logistic-type Keller-Segel systems involving subquadratic growth restrictions, Adv. Nonlinear Stud., 20 (2020)   795--817.

\bibitem{13} S. Wu, J. Shi, B. Wu, Global existence of solutions and uniform persistence of a diffusive predator-prey model with prey-taxis, J. Differential Equations, 260(7) (2016) 5847--5874.

\bibitem{14} T. Xiang, Global dynamics for a diffusive predator-prey model with prey-taxis and classical Lotka-Volterra kinetics, Nonlinear Anal., Real World Appl., 39 (2018) 278--299.

\bibitem{Xiang18-SIAM}T. Xiang,  Chemotactic aggregation versus logistic damping on boundedness in the 3D minimal Keller-Segel model,  SIAM J. Appl. Math., 78 (2018) 2420--2438.

\bibitem{14.1} X. Pan, L. Wang, On a quasilinear fully parabolic two-species chemotaxis system with two chemicals, Discrete Contin. Dyn. Syst. Ser. B, (2021) In press.


\bibitem{30} H. Yu, W. Wang, S. Zheng, Criteria on global boundedness versus finite time blow-up to a two-species chemotaxis system with two chemicals, Nonlinearity, 31 (2018) 502--514.

\bibitem{20} Q. Zhang, Y. Li, Global boundedness of solutions to a two-species chemotaxis system, Z. Angew. Math. Phys., 66 (2015) 83--93.

\bibitem{21} Q. Zhang, Y. Li, Global solutions in a high-dimensional two-species chemotaxis model with Lotka-Volterra competitive kinetics, J. Math. Anal. Appl., 467 (2018) 751--767.

\bibitem{3} Q. Zhang, W. Tao, Boundedness and stabilization in a two-species chemotaxis system with signal absorption, Comput. Math. Appl., 78 (2019) 2672--2681.
\end{thebibliography}
\end{document}